\def\TagsLeftOn{\tagsleft@true}\def\TagsLeftOff{\tagsleft@false}
\definecolor{darkblue}{rgb}{0.0,0.0,0.6}
\numberwithin{equation}{section}
\numberwithin{theorem}{section}
\numberwithin{figure}{section}
\numberwithin{table}{section}
\newtheorem{Lemma}[theorem]{Lemma}
\newtheorem{Proposition}[theorem]{Proposition}
\newtheorem{Corollary}[theorem]{Corollary}
\newtheorem{Definition}[theorem]{Definition}
\theoremstyle{remark}
\newtheorem{Remark}[theorem]{Remark}
\newtheorem{Example}[theorem]{Example}
\def\TagsLeftOn{\tagsleft@true}\def\TagsLeftOff{\tagsleft@false}
\def\namedlabel#1#2{\begingroup
    #2%
    \def\@currentlabel{#2}%
    \phantomsection\label{#1}\endgroup
}
\theoremstyle{theorem}
\newtheorem*{algorithm}{Algorithm}
\newcommand{\ind}{{\tau,\delta}}
\newcommand{\zth}[1]{z_{#1}^\ind}
\newcommand{\tth}[1]{t_{#1}^\ind}
\newcommand{\Ih}[2]{\II(\tth{#1},\zth{#2})}
\newcommand{\V}{\mathbb{V}}
\newcommand{\ovbar}[1]{\overline{#1}}
\newcommand{\ubar}[1]{\underline{#1}}
\newcommand{\Ik}[2]{\II(t_{#1},z_{#2})}
\begin{document}


\title{Convergence Analysis of a Local Stationarity Scheme for Rate-Independent
Systems and Application to Damage\thanks{This research was supported by the German Research Foundation (DFG) under grant 
number~KN~1131/3-1 within the priority program \textsl{Non-smooth and Complementarity-based
Distributed Parameter Systems: Simulation and Hierarchical Optimization} (SPP~1962).}}
\titlerunning{Convergence Analysis of a Local Stationarity Scheme for RIS and Application to Damage}

\author{Michael Sievers}

\institute{
 M.~Sievers \at
 Technische Universit\"at Dortmund, Fakult\"at f\"ur Mathematik, 
 Vogelpothsweg 87, 
 44227 Dortmund, Germany \\
 Tel.: +49 (0)231 755 3231\\
 Fax: +49 (0)231 755 5416\\
 \email{michael.sievers@math.tu-dortmund.de}           
}

\date{Received: date / Accepted: date}

\maketitle

\begin{abstract}
This paper is concerned with an approximation scheme for rate-independent systems 
governed by a non-smooth dissipation and a possibly non-convex energy functional. 
The scheme is based on the local minimization scheme introduced in \cite{efenmielke06}, but relies on local stationarity of the underlying minimization problem. Under the assumption of Mosco-convergence for the dissipation functional, we show that accumulation points exist and are so-called parametrized solutions of the rate-independent system. In particular, this guarantees the existence of parametrized solutions for a rather general setting. Afterwards, we apply the scheme to a model for the evolution of damage.
\keywords{Rate independent evolutions \and parametrized solutions \and unbounded dissipation \and existence \and finite elements
\and semi-smooth Newton methods \and damage}
\end{abstract}

\section{Introduction}
The effect of rate-independence occurs in various different areas of mechanics. This concerncs for example the field of elastoplasticity, damage and shape-memory, to only mention a few (see, e.g., \cite{krz13,francfort2006existence,mainik04phd,auricchio2008rate,mainik2009global}). One main characteristic of such systems is the fact that changes in the state are solely driven by an external force. What is more, as the name already suggests, the system is independent of the rate at which the loading is applied, that is to say, whenever $z$ is a solution to some external load $\ell$, then $z \circ \alpha$ is a solution to $\ell \circ \alpha$ for every monotone increasing function $\alpha$.\\ 
In this paper we consider rate-independent systems that can be described by the following differential inclusion
\begin{equation}
 0 \in \partial \RR(\dot z(t)) + D_z \II(t,z(t)) \qquad \ae \text{ in } [0,T] \, . \tag{RIS}
\label{subdiff-inclusion} 
\end{equation}
One may see this inclusion as a balance of forces, i.e. the dissipative force $\partial\RR$ and the potential force $-D_z\II(t,z)$ must annihilate each other. Implicitly hidden within this formulation is the fact that the potential force as well as the dissipative force result from a (possibly non-convex) energy functional $\II$ and a dissipation functional $\RR$. While we postpone the exact assumptions to Section~\ref{sec:assumptions}, let us mention at this point that the characteristic feature of the formulation in \eqref{subdiff-inclusion} is the positive $1$-homogeneity of the dissipation $\RR$. It is this property which induces that \eqref{subdiff-inclusion} is indeed rate-independent. However, the combination of non-convex energies and positive $1$-homogeneous dissipations allow the formation of abrupt changes in the state, even if the external forces evolve smoothly. Hence, suitable notions of solutions for \eqref{subdiff-inclusion} need to be able to handle temporal discontinuities. One such concept are the so-called \emph{parametrized solutions}, whose exact definition is given in Definition~\ref{def:paramsol}. Loosely speaking, such solutions are considered as curves in the extended phase space $[0,T] \times \ZZ$ and parametrized by arc-length. The jump path from one state to the other thus becomes an integral part of the solution itself. This idea was first applied in \cite{martins1994example,martins1995dissipative,bonfanti1996vanishing} for systems with dry friction and later on generalized in \cite{efenmielke06} and \cite{mielke2008modeling,mielkezelik} for finite and infinite dimensional problems, respectively. Particularly, in \cite{efenmielke06}, the authors introduced the following \emph{time-incremental local minimization scheme} for the approximation of parametrized solutions:
\begin{subequations}\label{eq:locminscheme}
\begin{align} 
 z_{k} &\in \argmin\{\II(t_{k-1},z) + \RR(z-z_{k-1}) \, : \, z \in \ZZ, \, \norm{z-z_{k-1}}_\VV \leq \tau\},  \label{eq:locmin}\\
 t_{k} &= \min\{t_{k-1}+\tau-\norm{z_{k}-z_{k-1}}_\VV,T\} . \label{eq:tupdate}
\end{align}
\end{subequations}
It was moreover shown that, for $\tau \searrow 0$, subsequences of discrete solutions generated by \eqref{eq:locminscheme} (weakly) converge to a parametrized solution. While the authors in \cite{efenmielke06} considered a finite dimensional setting, in \cite{Neg14,NegSca16} and particularly \cite{knees17} the results have been generalized to the infinite dimensional problems at least for semilinear energies. Furthermore, in \cite{fem_paramsol} the scheme was combined with a discretization in space. In this paper we extend these result to more general energy and dissipation functionals on the one side and, additionally, build the scheme upon stationary points rather then minimizer of \eqref{eq:locminscheme}. The actual scheme (\nameref{alg:locmin}) is presented in Section~\ref{sec:locmin}. Let us underline that the consideration of stationary points instead of (global) minimizers is of major importance from a numerical point of view, since optimization algorithms can in general only compute stationary points. What is more, we incorporate unbounded dissipations into our convergence analysis, which allows us to apply our scheme to a model for the evolution of damage. 
   

Now, let us shortly outline the paper. 
In Section~\ref{sec:assumptions}, we introduce our notation and state the assumptions on the energy 
and the dissipation functional. Moreover, we recall the precise notion of parametrized solutions. 
Section~\ref{sec:locmin} is then devoted to the presentation of the actual local stationarity scheme and its convergence analysis. Particularly, since we allow the dissipation to be approximated by some functional $\RR_\delta$, we provide suitably adapted a-priori estimates for the discrete solution which still meets a discrete version of an energy-equality. Building on that, we derive our main convergence result in Theorem~\ref{thm:main-conv}.
In Section~\ref{sec:num}, we then focus on a rate-independent damage model and describe the algorithmic realization of 
the discrete stationarity scheme based on a semismooth Newton-method. Finally, we present a numerical example and compare it with results from the literature. 

\section{Basic notations and standing assumptions}\label{sec:assumptions}

Let us start with some basic notation used throughout the paper. 
In the following, $C>0$ always stands for a generic constant. 
Moreover, given two normed linear spaces $X, Y$, we denote by $\dual{\cdot}{\cdot}_{X^*, X}$ 
the dual pairing and suppress the subscript, if there is no risk for ambiguity. 
By $\|\cdot\|_X$, we denote the norm in $X$ and $\LL(X,Y)$ is the space of linear and bounded 
operators from $X$ to $Y$. If $X$ is even a Hilbert space, we write $ J_X : X^* \to X $ for the Riesz isomorphism. Furthermore, $B_X(x,r)$ is the open ball in $X$ around $x\in X$ 
with radius $r>0$. Given a convex functional $f: X \to \R\cup\{\infty\}$, we denote the (convex) 
subdifferential of $f$ at $x$ by $\partial f(x)\subset X^*$ and its conjugate functional by 
$f^*: X^* \to \R\cup\{\infty\}$. 
Finally, $|\Omega|$ stands for the Lebesgue measure of a set $\Omega\subset \R^d$, $d\in \N$ and $\R^d_{\geq 0}$ describes the set of vectors in $\R^d$ whose components are greater or equal to $0$.

\subsection{Assumptions on the data}\label{sec:data}

Let us now introduce the assumptions on the quantities in \eqref{subdiff-inclusion}. We assume that the underlying spaces $\XX,\VV$ and $\ZZ $ are Banach spaces with $ \ZZ \embeds^{c,d} \VV \embeds \XX $. Moreover, $\ZZ$ and $\VV$ are required to be reflexive and separable.
 

\paragraph{\textbf{Energy}}\ \\
\noindent
The energy $\II(t,z)$ is supposed to fulfill:
\index{Assumption!(E1)-(E4): energy functional $\II$}
\begin{description}
\item[\namedlabel{ass:E1}{(E1)}] $\II \in C^1([0,T] \times \ZZ;\R)$.
\item[\namedlabel{ass:E2}{(E2)}] \emph{For all $t \in [0,T]$ the energy $\II(t,\cdot)$ is weakly lower semicontinuous and coercive on $\ZZ$ with $\II(t,z) \geq c_1 \norm{z}_\ZZ - c_0$ for some constants $ c_0,c_1 >0$.}
\item[\namedlabel{ass:E3}{(E3)}] \emph{There exists $\beta>0$ and $ \mu \in L^1(0,T)$ with $\mu \geq 0$ such that for all $t \in [0,T] $: \[ \abs{\partial_t \II(t,z)} \leq \mu(t) ( \II(t,z) + \beta) \quad \forall z \in \ZZ. \]}
\item[\namedlabel{ass:E4}{(E4)}] \emph{For all sequences $ t_k \to t $ and $ z_k \weakly z $ in $\ZZ$ it holds: \[ \partial_t \II(t_k,z_k) \to \partial_t\II(t,z). \]}
\end{description}
Note that the combination of \ref{ass:E1}--\ref{ass:E3} already yields that, for all sequences $t_k \to t$ and $ z_k \weakly z$ in $\ZZ$, it holds
\begin{equation}\label{eq:lower_semicont_combined}
\II(t,z) \leq \liminf_{k \to \infty} \II(t_k,z_k).
\end{equation}
Moreover, we assume that $\II$ satisfies the following G\r{a}rding-like inequality:
\begin{equation}\label{eq:garding-like}
\begin{gathered}
\forall z_1,z_2 \in \ZZ \textrm{ with } \norm{z_1}_\ZZ, \, \norm{z_2}_\ZZ \leq \rho \textrm{ there exists } c(\rho) \geq 0: \\
\dual{D_z\II(t,z_1)-D_z\II(t,z_2)}{z_1-z_2}_{\ZZ^*,\ZZ} \geq \alpha \norm{z_1-z_2}_\ZZ^2 - c(\rho) \norm{z_1-z_2}_\VV^2.
\end{gathered}
\end{equation}
With respect to the time component we require
\begin{equation}\label{eq:DzI-time-Lip}
\begin{gathered}
\forall z,v \in \ZZ \textrm{ with } \norm{z}_\ZZ \leq \rho \textrm{ there exists } C(\rho) \geq 0: \\ \dual{D_z\II(t,z)-D_z\II(s,z)}{v}_{\ZZ^*,\ZZ} \leq C(\rho) \abs{t-s} \norm{v}_\ZZ .
\end{gathered}
\end{equation}
Finally, we assume that $ D_z\II$ is (strong,weak)-weak-continuous, i.e.,
\begin{equation}\label{ass:I5}
\textrm{for all sequences } t_k \to t, \textrm{ and } z_k \weakly z \textrm{ in } \ZZ: \quad
D_z\II(t_k,z_k) \weakly D_z\II(t,z) \, \textrm{ in } \ZZ^* .
\end{equation} 

\noindent
The above assumptions combined with Gronwall's lemma allow us to obtain the following estimates that hold for all $t,s \in [0,T], \, z \in \ZZ$:
\begin{align}
 \II(t,z) + \beta & \leq (\II(s,z)+\beta) \exp\left( \int_s^t \mu(r) \dd r  \right) \label{eq:gronwall1} \\
 \text{ and } \quad \abs{\partial_t \II(t,z)} &\leq \mu(t) (\II(s,z)+\beta) \exp\left( \int_s^t \mu(r) \dd r \right) . 
 \label{eq:gronwall2}
\end{align}
An energy functional that satisfies all assumptions made here is given in Section~\ref{sec:num}.

\paragraph{\textbf{Dissipation}}\ \\
\noindent
Regarding the dissipation\index{dissipation!functional $\RR$} $\RR:\XX \to [0,\infty]$, we assume that 
\index{Assumption!(R1)-(R3): dissipation functional $\R$}
\begin{description}
\item[\namedlabel{ass:R1}{(R1)}] \emph{$\RR$ is proper, convex and lower semicontinuous,} 
\item[\namedlabel{ass:R2}{(R2)}] \emph{$\RR$ is positively $1$-homogeneous, i.e., $\RR(\lambda v) = \lambda \RR(v) \, \forall v \in \XX, \, \lambda > 0$, } 
\item[\namedlabel{ass:R3}{(R3)}] $\exists \, \kappa >0 : \, \kappa \, \norm{v}_\XX \leq \RR(v)$. 
\end{description}

\begin{Remark}
Note that we allow for an unbounded dissipation, which is essential for the application of our method to the damage model in Section~\ref{sec:app_num}.
\end{Remark}

Combining the convexity and the positive $1$-homogeneity of $\RR$, it is easy to verify the following triangle inequality
\begin{equation}\label{eq:R_triangle_ineq}
\RR(u-w) \leq \RR(u-v) + \RR(v-w) \quad \forall u,v,w \in \ZZ. 
\end{equation}
In fact we allow for an approximation of the "original" dissipation in our convergence analysis. In general, this corresponds to an approximation using e.g. finite elements. However, we will keep the setting general here, that is, we assume $ \RR_\delta : \XX \to [0,\infty]$ satisfies the same assumptions as $\RR$, i.e. \ref{ass:R1}-\ref{ass:R3}, and Mosco-converges to $\RR$ w.r.t. the space $\ZZ$ in the following sense:
\begin{subequations}\label{ass:Gamma-Conv-R}
\begin{align}
&\text{for all sequences } z_\delta \weakly z \, \text{in } \ZZ \, (\text{for } \delta \to 0): \quad \liminf_{\delta \to 0} \RR_\delta(z_\delta) \geq \RR(z) \\
&\forall z \in \ZZ \, \exists \text{ a sequence } z_\delta \to z \, \text{in } \ZZ \, (\text{for } \delta \to 0): \quad \limsup_{\delta \to 0} \RR_\delta(z_\delta) \leq \RR(z)
\end{align}
\end{subequations}

\begin{Remark}\label{rem:RR_only_in_ZZ}
Note that from now on, we consider $\RR$ and $\RR_\delta$, respectively, as mapping from $\ZZ$ into $[0,\infty]$. In fact, we will subsequently always evaluate $\RR$ at a point in $\ZZ$. Thus, the space $\XX$ is not used in the convergence analysis. Moreover, the choice $\RR_\delta = \RR$ 
(i.e., no additional approximation of $\RR$) is clearly possible and fulfills all these assumptions. 
Another example that complies with all the assumptions \ref{ass:R1}-\ref{ass:R3} and \eqref{ass:Gamma-Conv-R} but satisfies $\RR_\delta \not= \RR$ is given in Section~\ref{sec:FE} below.
\end{Remark}

\begin{Remark}\label{rem:subdiffRR_weakly_closed}
In view of the previous Remark~\ref{rem:RR_only_in_ZZ}, for any $z\in \ZZ$ the subdifferential $\partial\RR_\delta(z)$ is subsequently considered as a subset of $\ZZ^*$, i.e., by Lemma~\ref{lem:char.subdiff} we have $\xi \in \partial\RR_\delta(z)$ iff
\begin{align*}
\RR_\delta(w) &\geq \dual{\xi}{w}_{\ZZ^*,\ZZ} \quad \forall w \in \ZZ \\
\RR_\delta(z) &= \dual{\xi}{z}_{\ZZ^*,\ZZ}.
\end{align*}
In particular, in order to ease notation, we will refrain from using $\partial^\ZZ\RR_\delta(z)$ keeping in mind that $\partial\RR_\delta(z) \subset \ZZ^*$. Clearly, by the convexity and lower semicontinuity of $\RR_\delta$, the set $\partial\RR_\delta(0)$ is also weakly closed in $\ZZ^*$.
\end{Remark}

\paragraph{\textbf{Initial state}}\ \\
\noindent
The initial value $z_0$ is supposed to satisfy $z_0\in \ZZ$ and the local stability $-D_z\II(0,z_0) \in \partial\RR(0)$. Moreover, we assume that the approximations of the initial value as well satisfy $ -D_z\II(0,z_0^\delta) \in \partial \RR_\delta(0)$ for all $\delta > 0$ and that $ z_0^\delta $ is bounded in $\ZZ$ independent of $ \delta$. \vspace*{0.2cm}
\subsection{Definition of parametrized solutions}

We now turn to the actual definition of the so-called parametrized solutions. As indicated in the introduction, this concept takes care of possible jump paths and relies on an energy identity. 

\begin{Definition}\label{def:paramsol}
 Let an initial value $z_0 \in \ZZ$ be given. 
 We call a tuple $(\hat t, \hat z)$ \emph{parametrized solution} of \eqref{subdiff-inclusion}, 
 if there exists an artificial end time $S \geq T$ such that 
 the following conditions are satisfied:
 \begin{itemize}
  \item[(i)] Regularity:
  \begin{gather}
   \hat t \in W^{1, \infty}(0,S), \quad 
   \hat z \in W^{1,\infty}(0,S;\VV) \cap L^\infty(0,S;\ZZ)
  \end{gather}
  \item[(ii)] Initial and end time condition:
  \begin{equation}
  \hat t(0) = 0, \quad \hat z(0) = z_0, \quad \hat t(S) = T.
  \end{equation}
  \item[(iii)] Complementarity-like relations:
  \begin{subequations}\label{eq:compl}
  \begin{gather}
   \hat{t}^\prime(s) \geq 0 , \quad \hat{t}^\prime(s) + \norm{\hat{z}^\prime(s)}_\VV \leq 1, 
   \label{eq:degenerate} \\
   \hat{t}^\prime(s) \dist_{\VV^\ast}\{-D_z\II(\hat{t}(s),\hat{z}(s)),\partial \RR(0)\} = 0
   \quad \text{f.a.a.\ } s\in (0,S), \label{eq:slack}
  \end{gather}  
  \end{subequations}
  where $\dist_{\VV^*}\{\eta,\partial\RR(0)\} = \inf\{\norm{\eta- w}_{\VV^*}  : w \in \partial \RR(0)\}$, see also Lemma~\ref{lem:conv.ana}.
  \item[(iv)] Energy identity:
  \begin{equation}\label{eq:energyiddef}
  \begin{aligned}
   & \hspace*{-3ex}\II(\hat{t}(s),\hat{z}(s)) + \int_0^{s} \RR(\hat{z}^\prime(\sigma)) + \norm{\hat{z}^\prime(\sigma)}_\VV 
   \dist_{\VV^\ast}\{-D_z\II(\hat{t}(\sigma),\hat{z}(\sigma)),\partial \RR(0)\} \d \sigma \\
   &\qquad = \II(0,z_0)
   + \int_0^{s} \partial_t \II(\hat{t}(\sigma),\hat{z}(\sigma))\hat{t}^\prime(\sigma) \dd \sigma 
   \qquad \forall\, s\in [0,S].
  \end{aligned}
  \end{equation}
 \end{itemize}
 If, in addition to the second inequality in \eqref{eq:degenerate}, there is a constant $\gamma>0$
 such that $\hat{t}^\prime(s) + \norm{\hat{z}^\prime(s)}_\VV > \gamma$ f.a.a.\ $s\in (0,S)$, then the solution is called 
 \emph{non-degenerate parametrized solution}, otherwise we call it \emph{degenerate parametrized solution}.
\end{Definition}

We point out that it is always possible 
to rescale the artificial time in order to obtain a \emph{normalized parametrized solution}, 
where $\hat{t}^\prime(s) + \norm{\hat{z}^\prime(s)}_\VV = 1$ f.a.a.\ $s\in (0,S)$. The key idea here
is to cut out all intervals where $t^\prime(s)+\norm{z^\prime(s)}_\VV = 0$ and to scale the artificial time appropriately, see, e.g., \citep[Lem. A.4.3]{diss_sievers}. Moreover, let us mention that the regularity conditions in our definition of parametrized solutions are chosen in such a way, that all terms contained are well-defined. Depending on the actual setting, particularly the choice of $\RR$ and $\II$, there might exist slightly different requirements, see, e.g., \citep[Def. 4.2]{mrs12inf}.

\section{Local stationarity scheme}\label{sec:locmin}
The ultimate goal of this section is to prove that the subsequent algorithm, which is based on the local incremental minimization scheme \eqref{eq:locminscheme}, provides an approximation scheme for parametrized solutions. The difference compared to \eqref{eq:locminscheme} is that we search for stationary points of the constrained problem rather than global minima, see \eqref{eq:alg.zupdate}. Thus, for a given time-discretization parameter $T \geq \tau >0$, the algorithm reads as follows:

{
\begin{algorithm}[LISS]\label{alg:locmin}\ \\[-1ex]
\samepage
 \begin{algorithmic}[1]
  \STATE Let $z_0^\delta \in \ZZ$ be given with $ -D_z\II(0,z_0^\delta) \in \partial\RR_\delta(0)$. Set $t_0 = 0$, and $k = 1$.
  \WHILE{$t_k^\ind < T$}
   \STATE Compute a \emph{stationary point} $z_k^\ind$, i.e.,
   \begin{equation}
    0 \in \partial^{\ZZ}(\RR_\delta+I_\tau)(z_k^\ind-z_{k-1}^\ind) + D_z\II(t_{k-1}^\ind,z_k^\ind) \tag{alg$_1$} \label{eq:alg.zupdate} 
   \end{equation}
with the indicator function $I_\tau$ (see \eqref{eq:inditau}), which, additionally, satisfies 
   \begin{equation}\label{eq:alg.zupdate_lower}
   \II(t_{k-1}^\ind,z_k^\ind)+\RR_\delta(z_k^\ind-z_{k-1}^\ind) \leq \II(t_{k-1}^\ind,z_{k-1}^\ind). \tag{alg$_2$}
   \end{equation}
   \STATE Time update: 
   \begin{equation}
    t_k^\ind = t_{k-1}^\ind+\tau-\norm{z_k^\ind-z_{k-1}^\ind}_\V . \label{eq:alg.timeupdate} \tag{alg$_3$}
   \end{equation} 
   \STATE Set $k \to k+1$.
  \ENDWHILE
 \end{algorithmic}
 \nopagebreak
\end{algorithm}
}

Note that merely for technical reasons, we do not use the "$\min$" from \eqref{eq:tupdate} in the time-update. In addition, the notation $\partial^\ZZ$ is  used here only once more to highlight that the subdifferential is in fact calculated in terms of the space $\ZZ$, see also Remark~\ref{rem:subdiffRR_weakly_closed}. The proposed method is closely related to \eqref{eq:locminscheme}, since a local minimizer of 
\begin{equation}\label{eq:locmin_h}
\min\{\II(t_{k-1}^\ind,z) + \RR_\delta(z-z_{k-1}^\ind) \, : 
    \, z \in \ZZ, \, \norm{z- z_{k-1}^\ind}_\VV \leq \tau\}
\end{equation}
necessarily satisfies \eqref{eq:alg.zupdate}. Moreover, thanks to the assumptions on $\II$ and $\RR_\delta$, in particular weak lower semicontinuity, 
the existence of a global minimum of \eqref{eq:locmin_h} and therefore also the existence of a stationary point fulfilling \eqref{eq:alg.zupdate_lower} is guaranteed by the direct method in the calculus of variations.
The reason for investigating (\nameref{alg:locmin}) instead of \eqref{eq:locminscheme}, is the fact that a numerical algorithm for solving \eqref{eq:locmin} or rather \eqref{eq:locmin_h} naturally provides a stationary point $ z_k^\ind$ that satisfies $\II(t_{k-1}^\ind,z_k^\ind)+\RR_\delta(z_k^\ind-z_{k-1}^\ind) \leq \II(t_{k-1}^\ind,z_{k-1}^\ind)$ but, in case of a nonconvex energy, is not guaranteed to be a global optimum of \eqref{eq:locmin} and \eqref{eq:locmin_h}, respectively. Moreover, since the concept of parametrized solutions is based on a local stability condition, it is consistent to look for locally stable points, which are exactly the stationary points of \eqref{eq:locmin}. Despite its necessity for the convergence analysis, the inequality in \eqref{eq:alg.zupdate_lower} is also physically meaningful since it enforces the system to look for energetically preferable states, i.e., states with a lower energy cost. Concerning the exploration of this algorithm, particularly with a view to convergence, we proceed as follows: We start with characterizing properties of the stationary points. Afterwards, we turn to the essential a priori estimates that will allow a passage to the limit in the discrete version of the energy identity in \eqref{eq:energyiddef}, which is deduced in Lemma~\ref{prop:discrete-eneq}. The limit procedure itself is elaborated in the final Section~\ref{sec:main}.

\subsection{Approximate discrete parameterized solution}

The foundation for both, the a priori estimates and the discrete version of the energy identity, is given by the following Lemma~\ref{prop.optimalityprops}. It provides various properties of a stationary point $\zth{k}$ in \eqref{eq:alg.zupdate} and shows some similarities with the complementarity in \eqref{eq:compl}. Indeed, we will see that one can interpret the stationarity condition as a discrete version of \eqref{eq:compl}.

\begin{Lemma}[Discrete optimality System]\label{prop.optimalityprops} 
Let $k\geq 1$ and $\zth{k}$ be an arbitrary stationary point in the sense of \eqref{eq:alg.zupdate} 
with associated $\tth{k-1}$ given by \eqref{eq:alg.timeupdate}. 
Then the following properties are satisfied:
There exists a subgradient $\zeta_k^\ind \in \partial I_\tau(\zth{k}-\zth{k-1})$ such that
\begin{subequations}
\begin{gather}
 \norm{\zeta_k^\ind}_{\VV^\ast}(\norm{\zth{k}-\zth{k-1}}_\VV-\tau) = 0, \label{eq:opt.prop01} \\
 \tau \dist_{\VV^\ast}\{-D_z\Ih{k-1}{k},\partial \RR_\delta(0)\} 
 = \dual{\zeta_k^\ind}{\zth{k}-\zth{k-1}}_{\VV^\ast,\VV}, \label{eq:opt.prop02} \\
 \left.
 \begin{aligned}
  \RR_\delta(\zth{k}-\zth{k-1}) + \tau \dist_{\VV^\ast}\{-D_z\Ih{k-1}{k},\partial \RR_\delta(0)\} 
  \qquad\quad & \\
  = \dual{-D_z\Ih{k-1}{k}}{\zth{k}-\zth{k-1}}_{\ZZ^\ast,\ZZ} &
 \end{aligned} \; \right\}\label{eq:opt.prop03} \\
 \RR_\delta(v) \geq - \dual{\zeta_k^\ind
 + D_z\Ih{k-1}{k}}{v}_{\ZZ^\ast,\ZZ} \quad\forall v\, \in \ZZ. \label{eq:opt.prop04} 
\end{gather}
\end{subequations}
Herein, $\dist_{\VV^\ast}\{\, \cdot \, ,\partial \RR_\delta(0)\}$ denotes the extended distance as defined in Lemma~\ref{lem:conv.ana} 
\end{Lemma}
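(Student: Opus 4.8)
The plan is to derive all four identities from the stationarity inclusion~\eqref{eq:alg.zupdate} by splitting the subdifferential of the sum $\RR_\delta+I_\tau$. Since $0\in\operatorname{dom}\RR_\delta$ lies in the $\ZZ$-interior of $\operatorname{dom}I_\tau=\{v\in\ZZ:\norm{v}_\VV\le\tau\}$ (which equals $\{v\in\ZZ:\norm{v}_\VV<\tau\}$ because $\ZZ\embeds\VV$), the sum rule for convex subdifferentials applies and gives $\partial^\ZZ(\RR_\delta+I_\tau)(\zth{k}-\zth{k-1})=\partial\RR_\delta(\zth{k}-\zth{k-1})+\partial I_\tau(\zth{k}-\zth{k-1})$. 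Hence there exist $\eta_k\in\partial\RR_\delta(\zth{k}-\zth{k-1})$ and $\zeta_k^\ind\in\partial I_\tau(\zth{k}-\zth{k-1})$ with
\[
\eta_k+\zeta_k^\ind+D_z\Ih{k-1}{k}=0,
\]
and $\zeta_k^\ind$ is the subgradient claimed in the statement. Because $I_\tau$ only depends on the $\VV$-norm, $\partial I_\tau(\zth{k}-\zth{k-1})$ is the normal cone at $\zth{k}-\zth{k-1}$ of the closed $\VV$-ball of radius $\tau$; by density of $\ZZ$ in $\VV$ (cf.\ Lemma~\ref{lem:conv.ana}) this forces $\zeta_k^\ind\in\VV^\ast$, with $\zeta_k^\ind=0$ whenever $\norm{\zth{k}-\zth{k-1}}_\VV<\tau$, and $\dual{\zeta_k^\ind}{v}_{\VV^\ast,\VV}\le\tau\norm{\zeta_k^\ind}_{\VV^\ast}$ for all $v$ with $\norm{v}_\VV\le\tau$, equality holding at $v=\zth{k}-\zth{k-1}$ (this is the Fenchel--Young equality for $I_\tau$, whose conjugate is the support function $\tau\norm{\cdot}_{\VV^\ast}$ of the ball). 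Property~\eqref{eq:opt.prop01} is then immediate, as either $\zeta_k^\ind=0$ or $\norm{\zth{k}-\zth{k-1}}_\VV=\tau$.

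The core step is to identify $\norm{\zeta_k^\ind}_{\VV^\ast}$ with the extended distance $\dist_{\VV^\ast}\{-D_z\Ih{k-1}{k},\partial\RR_\delta(0)\}$. Positive $1$-homogeneity of $\RR_\delta$ gives $\partial\RR_\delta(\zth{k}-\zth{k-1})\subset\partial\RR_\delta(0)$ and the Fenchel--Young equality $\dual{\eta_k}{\zth{k}-\zth{k-1}}_{\ZZ^\ast,\ZZ}=\RR_\delta(\zth{k}-\zth{k-1})$; in particular $\eta_k=-D_z\Ih{k-1}{k}-\zeta_k^\ind\in\partial\RR_\delta(0)$, which immediately yields the bound $\dist_{\VV^\ast}\{-D_z\Ih{k-1}{k},\partial\RR_\delta(0)\}\le\norm{\zeta_k^\ind}_{\VV^\ast}$. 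For the reverse bound, take any $w\in\partial\RR_\delta(0)$; then $\dual{w}{\zth{k}-\zth{k-1}}\le\RR_\delta(\zth{k}-\zth{k-1})=\dual{\eta_k}{\zth{k}-\zth{k-1}}$, whence
\[
\dual{-D_z\Ih{k-1}{k}-w}{\zth{k}-\zth{k-1}}=\dual{\eta_k-w}{\zth{k}-\zth{k-1}}+\dual{\zeta_k^\ind}{\zth{k}-\zth{k-1}}\ge\tau\norm{\zeta_k^\ind}_{\VV^\ast},
\]
using the equality case above. If $\zeta_k^\ind\ne0$, then $\norm{\zth{k}-\zth{k-1}}_\VV=\tau$ by~\eqref{eq:opt.prop01}, so an application of the Cauchy--Schwarz inequality in $\VV^\ast\times\VV$ and division by $\tau$ give $\norm{-D_z\Ih{k-1}{k}-w}_{\VV^\ast}\ge\norm{\zeta_k^\ind}_{\VV^\ast}$ for every $w$ (here $\norm{\cdot}_{\VV^\ast}$ is read as $+\infty$ off $\VV^\ast$, consistently with the extended distance of Lemma~\ref{lem:conv.ana}); if $\zeta_k^\ind=0$ this is trivial and $-D_z\Ih{k-1}{k}=\eta_k\in\partial\RR_\delta(0)$ makes the distance vanish. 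In either case $\dist_{\VV^\ast}\{-D_z\Ih{k-1}{k},\partial\RR_\delta(0)\}=\norm{\zeta_k^\ind}_{\VV^\ast}$, and combining this with the equality case of Fenchel--Young for $I_\tau$ proves~\eqref{eq:opt.prop02}.

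The last two identities are then pure bookkeeping. For~\eqref{eq:opt.prop03} I would substitute $-D_z\Ih{k-1}{k}=\eta_k+\zeta_k^\ind$, pair with $\zth{k}-\zth{k-1}$, and combine $\dual{\eta_k}{\zth{k}-\zth{k-1}}=\RR_\delta(\zth{k}-\zth{k-1})$ with~\eqref{eq:opt.prop02}. For~\eqref{eq:opt.prop04}, observe that $\zeta_k^\ind+D_z\Ih{k-1}{k}=-\eta_k$ with $\eta_k\in\partial\RR_\delta(0)$, so $-\dual{\zeta_k^\ind+D_z\Ih{k-1}{k}}{v}_{\ZZ^\ast,\ZZ}=\dual{\eta_k}{v}\le\RR_\delta(v)$ for all $v\in\ZZ$. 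I expect the only genuinely delicate point to be the very first step: justifying the subdifferential sum rule for a possibly unbounded $\RR_\delta$ (handled through the interior point $0$ of $\operatorname{dom}I_\tau$), and keeping careful track of which dual space — $\VV^\ast$ or only $\ZZ^\ast$ — each object lives in, which is exactly where the extended-distance convention of Lemma~\ref{lem:conv.ana} has to be invoked.
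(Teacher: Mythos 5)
Your proof is correct, and it uses the same underlying ingredients as the paper: the convex sum rule at the interior point $0$ of $\dom I_\tau$, the characterization of $\partial I_\tau$ from Lemma~\ref{lem:subdiffItau}, the Fenchel--Young equality for $\RR_\delta$, and the inclusion $\partial\RR_\delta(\zth{k}-\zth{k-1})\subset\partial\RR_\delta(0)$ coming from positive $1$-homogeneity. The organizational difference is where the distance term enters. The paper obtains \eqref{eq:opt.prop03} first, in one stroke, from the Fenchel--Young identity applied to the \emph{whole} functional $\RR_{\tau,\delta}=\RR_\delta+I_\tau$ combined with the inf-convolution formula $(\RR_\delta+I_\tau)^\ast(\eta)=\tau\dist_{\VV^\ast}\{\eta,\partial\RR_\delta(0)\}$ computed in Lemma~\ref{lem:conv.ana}, and only afterwards splits the subdifferential to get \eqref{eq:opt.prop01}, \eqref{eq:opt.prop02}, \eqref{eq:opt.prop04}. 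You split first and then re-derive, by hand, the one instance of that conjugate formula you actually need, namely $\dist_{\VV^\ast}\{-D_z\Ih{k-1}{k},\partial\RR_\delta(0)\}=\norm{\zeta_k^\ind}_{\VV^\ast}$, via a two-sided inequality (the ``$\leq$'' direction from $\eta_k\in\partial\RR_\delta(0)$, the ``$\geq$'' direction by testing against $\zth{k}-\zth{k-1}$ and Cauchy--Schwarz, carefully using the $+\infty$ convention off $\VV^\ast$). This is a valid, somewhat more self-contained substitute; its only cost is that you repeat a computation the paper has already packaged in Lemma~\ref{lem:conv.ana}. Citing that lemma directly, as the paper does, would have let you read off \eqref{eq:opt.prop03} immediately and shorten the remaining bookkeeping.

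One small remark on wording: your parenthetical claim that the $\ZZ$-interior of $\dom I_\tau$ equals $\{v\in\ZZ:\norm{v}_\VV<\tau\}$ is a bit more than you need and requires a short argument of its own (adding a small multiple of $v$ itself to increase the $\VV$-norm when $\norm{v}_\VV=\tau$); all that matters for the sum rule is that $I_\tau$ is continuous at $0$ and $\RR_\delta(0)<\infty$, which is how the paper phrases it.
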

\begin{proof}
The proof is given, e.g., in \cite{fem_paramsol, diss_sievers} under a slightly different setting. For convenience of the reader, we thus repeat the main steps. To shorten the notation we also set $\RR_{\tau,\delta} = \RR_\delta + I_\tau$, cf. \eqref{eq:eq.A2}, and suppress the superscripts $ \tau, \delta$ for the iterates throughout the proof. 
\noindent
Thanks to a classical result of convex analysis, \eqref{eq:alg.zupdate} is equivalent to
\begin{equation}\label{eq:young}
\begin{aligned}
 \RR_{\tau,\delta}(z_{k}-z_{k-1}) &+ \RR_{\tau,\delta}^*(-D_z\II(t_{k-1}, z_{k}))\\ 
 &\qquad = \dual{-D_z\II(t_{k-1}, z_{k})}{z_{k}-z_{k-1}}_{\ZZ^\ast,\ZZ}\\
\end{aligned}
\end{equation}
Since $\norm{z_{k}-z_{k-1}}_\VV \leq \tau$ we have 
\begin{equation}\label{eq:RtauhRh}
 \RR_{\tau,\delta}(z_{k}-z_{k-1}) = \RR_\delta(z_{k}-z_{k-1}) . 
\end{equation}
Moreover, from Lemma~\ref{lem:conv.ana}, we infer
\[\RR_{\tau,\delta}^*(- D_z\II(t_{k-1},z_{k})) 
= \tau \dist_{\VV^\ast}\{-D_z\II(t_{k-1},z_{k}),\partial \RR_\delta(0)\}. \]
Inserting this together with \eqref{eq:RtauhRh} in \eqref{eq:young} gives \eqref{eq:opt.prop03}.\\
\noindent
To prove \eqref{eq:opt.prop01}, we consider \eqref{eq:alg.zupdate} once more. 
Since $ 0 \in \dom( \RR_\delta) \cap \dom(I_\tau)$ and $ I_\tau$ is continuous in $0$, the sum rule for convex
subdifferentials is applicable giving the existence of a $\zeta_k \in \partial I_\tau(z_{k}-z_{k-1})$, such that 
\begin{equation} 
 0 \in \partial \RR_\delta (z_{k}-z_{k-1}) + \zeta_k + D_z\II(t_{k-1}, z_{k}) 
 \label{dis-param-sol-inclusion} 
\end{equation}
and thereby
\begin{align*} 
 \RR_\delta(z_{k} - z_{k-1}) &+ \RR_\delta^*(-\zeta_k- D_z\II(t_{k-1}, z_{k})) \\
&\qquad = - \dual{\zeta_k+ D_z\II(t_{k-1}, z_{k})}{z_{k}-z_{k-1}}_{\ZZ^\ast,\ZZ} \\
&\qquad = - \dual{\zeta_k}{z_{k}-z_{k-1}}_{\VV^\ast,\VV} 
- \dual{D_z\II(t_{k-1}, z_{k})}{z_{k}-z_{k-1}}_{\ZZ^\ast,\ZZ}.
\end{align*}
A comparison with \eqref{eq:opt.prop03}, shows that
\begin{align} 
\begin{split}
 & \RR_\delta^* (-\zeta_k-D_z\II(t_{k-1}, z_{k})) \\
 & \qquad = \tau \dist_{\VV^\ast}\{- D_z\II(t_{k-1}, z_{k}),\partial \RR_\delta(0)\} 
 - \dual{\zeta_k}{z_{k}-z_{k-1}}_{\VV^\ast,\VV} . \label{eq:eq.aux00a}
\end{split}
\end{align}
Now, the fact that $\zeta_k \in \partial I_\tau(z_{k}-z_{k-1})$, and the characterization in Lemma~\ref{lem:subdiffItau} immediately yields \eqref{eq:opt.prop01}.
\noindent
Next, we verify \eqref{eq:opt.prop02}. For this purpose, we observe that by assumption $\RR_\delta$ is also convex and positively $1$-homogeneous so that Lemma~\ref{lem:char.subdiff} implies $ \partial\RR_\delta(z_{k}-z_{k-1}) 
 \subset \partial\RR_\delta(0)$. The characterization of the conjugate functional from Lemma~\ref{lem:char.subdiff} in combination with \eqref{dis-param-sol-inclusion} thus yields 
\begin{align}
 -\zeta_k - D_z\II(t_{k-1}, z_{k}) \in \partial \RR_\delta(z_{k}-z_{k-1}) 
 \subset \partial\RR_\delta(0) \qquad\quad & \label{eq:subdiffR0}\\
 \Longrightarrow \quad \RR_\delta^*(-\zeta_k- D_z\II(t_{k-1}, z_{k})) = 0. &
\end{align}
Inserting this into \eqref{eq:eq.aux00a} we arrive at \eqref{eq:opt.prop02}.
\noindent
Finally, \eqref{eq:opt.prop04} is an immediate consequence of \eqref{eq:subdiffR0}.

\end{proof}

\begin{Remark}
In fact, since \eqref{eq:alg.zupdate} is equivalent to the properties \eqref{eq:opt.prop01}--\eqref{eq:opt.prop04} it might be practical to exploit the characterization via \eqref{eq:opt.prop01}--\eqref{eq:opt.prop04} for the actual numerical realization of (\nameref{alg:locmin}) instead of \eqref{eq:alg.zupdate} in order to calculate a stationary point. Moreover, we will solely build upon this discrete optimality system (and the inequality \eqref{eq:alg.zupdate_lower}) for the convergence analysis.
\end{Remark}

Let us take a further look at \eqref{eq:opt.prop04}. Exploiting the properties of $ \zeta_k^\ind $ from \eqref{eq:subdiffItau_char} it is easy to see that $ \dual{\zeta_k^\ind}{\zth{k}-\zth{k-1}}_{\VV^\ast,\VV} = \tau \norm{\zeta_k^\ind}_{\VV^\ast}$. Inserting this into \eqref{eq:opt.prop04} we find that 
\begin{equation*} \dist_{\VV^\ast}\{-D_z\Ih{k-1}{k},\partial \RR_\delta(0)\} 
 = \norm{\zeta_k^\ind}_{\VV^\ast} .
 \end{equation*}
Combining this with \eqref{eq:opt.prop01} and the time-update $ t_k^\ind-t_{k-1}^\ind = \tau - \norm{z_k^\ind-z_{k-1}^\ind}_\V \geq 0$ from \eqref{eq:alg.timeupdate} we therefore obtain
\begin{equation}\label{eq:discr-complementarity}
\begin{gathered}
\frac{t_k^\ind-t_{k-1}^\ind}{\tau} \geq 0 \, , \quad \frac{t_k^\ind-t_{k-1}^\ind}{\tau} + \frac{\norm{z_k^\ind-z_{k-1}^\ind}_\VV}{\tau} = 1 \\
\left(\frac{t_k^\ind-t_{k-1}^\ind}{\tau}\right) \dist_{\VV^\ast}\{-D_z\Ih{k-1}{k},\partial \RR_\delta(0)\} = 0 
\end{gathered}
\end{equation}
which is a discrete version of the complementarity condition in the definition of parametrized solutions, cf. \eqref{eq:compl}.
\subsection{A-priori estimates}\label{sec:apriori}

Based on the previous Lemma~\ref{prop.optimalityprops}, we subsequently provide several a priori estimates that will allow a passage to the limit in the discrete energy identity in Section~\ref{sec:discrete-energy-eq} and \ref{sec:main}, respectively. Furthermore, we show that the discrete physical time $ t_k^\ind$ given the time update in \eqref{eq:alg.timeupdate} reaches the final time $T$ in a finite number of iterations, 
see Proposition~\ref{prop.finiteNumberSteps} below. We start with the following collection of results, whose proofs are basic so that we refer to \cite{fem_paramsol,knees17} here. Let us, nevertheless, remark that these a priori estimates are the only point where one needs to exploit that  $z_k^\ind$ is energetically preferred, that means \eqref{eq:alg.zupdate_lower} holds, and not only a stationary point satisfying \eqref{eq:opt.prop01}-\eqref{eq:opt.prop04}.

\begin{Lemma}[Boundedness for energy and dissipation] \label{lem.basicest1} 
For all $\delta,\tau>0$ and all $k \in \N$, it holds
\begin{gather}
\II(\tth{k},\zth{k}) + \sum_{i=1}^k \RR_\delta(\zth{i}-\zth{i-1}) \leq ( \beta +\II(0,z_0^\ind)) \exp\left(\int_0^T \mu(s) \dd s \right), \label{eq:basic.02} \\
 \sup_{\delta,\tau>0,\, k\in\N} \norm{\zth{k}}_\ZZ < \infty . 
 \label{eq:basic.03}
\end{gather}
where $\beta$ and $\mu$ are the components from Section~\ref{sec:assumptions}.
\end{Lemma}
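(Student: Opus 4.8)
The plan is to obtain \eqref{eq:basic.02} by combining the energy-decrease inequality \eqref{eq:alg.zupdate_lower} with the Gronwall-type bound \eqref{eq:gronwall1}, and then to read off \eqref{eq:basic.03} from the coercivity in \ref{ass:E2}; a detailed argument in a closely related setting is contained in \cite{fem_paramsol,knees17}. First I would set $b_k := \II(\tth{k},\zth{k}) + \beta$, so that $b_0 = \II(0,z_0^\ind)+\beta$. Since the constraint $\norm{\zth{k}-\zth{k-1}}_\VV\le\tau$ is implicit in \eqref{eq:alg.zupdate}, the time update \eqref{eq:alg.timeupdate} gives $\tth{k}\ge\tth{k-1}$, so \eqref{eq:gronwall1} applied at the state $\zth{k}$ between the times $\tth{k-1}$ and $\tth{k}$ yields $b_k\le\big(\II(\tth{k-1},\zth{k})+\beta\big)\,m_k$ with $m_k:=\exp\big(\int_{\tth{k-1}}^{\tth{k}}\mu(r)\dd r\big)\ge1$. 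Combining this with \eqref{eq:alg.zupdate_lower}, rewritten as $\II(\tth{k-1},\zth{k}) + \beta + \RR_\delta(\zth{k}-\zth{k-1}) \le \II(\tth{k-1},\zth{k-1}) + \beta = b_{k-1}$, produces the recursion $b_k\le\big(b_{k-1}-\RR_\delta(\zth{k}-\zth{k-1})\big)\,m_k$.

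Dividing this recursion by the running product $P_k:=\prod_{i=1}^k m_i=\exp\big(\int_0^{\tth{k}}\mu\big)$ (with $P_0:=1$) makes it telescope, and summing over $i=1,\dots,k$ gives
\[
 \frac{b_k}{P_k} + \sum_{i=1}^k \frac{\RR_\delta(\zth{i}-\zth{i-1})}{P_{i-1}} \;\le\; b_0 .
\]
Because $\mu\ge0$, every $P_i$ lies in $[1,M]$ with $M:=\exp\big(\int_0^T\mu\big)$ (extending $\mu$ by $0$ past $T$ to absorb the single step in which $\tth{k}$ may overshoot $T$), and $b_k\ge0$ since \ref{ass:E3} forces $\II(\cdot,\cdot)+\beta\ge0$; hence the displayed inequality implies $b_k+\sum_{i=1}^k\RR_\delta(\zth{i}-\zth{i-1})\le M\,b_0$, which is precisely \eqref{eq:basic.02} (using $\beta>0$).

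For \eqref{eq:basic.03} I would simply feed \eqref{eq:basic.02} into the coercivity from \ref{ass:E2}: $c_1\norm{\zth{k}}_\ZZ\le\II(\tth{k},\zth{k})+c_0\le(\beta+\II(0,z_0^\ind))\exp(\int_0^T\mu)+c_0$. The right-hand side is bounded uniformly in $\tau$ and $\delta$, because $\beta$ and $\int_0^T\mu$ depend only on the data, $\tau$ does not enter the initial time, and $\II(0,z_0^\delta)$ stays bounded since $z_0^\delta$ is bounded in $\ZZ$ independently of $\delta$ and $\II(0,\cdot)$ is continuous, hence bounded on bounded sets. This yields the claimed uniform bound on $\norm{\zth{k}}_\ZZ$.

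The only genuinely delicate point is the index bookkeeping caused by the time shift: \eqref{eq:alg.zupdate_lower} controls the energy at the new state $\zth{k}$ but at the old physical time $\tth{k-1}$, so one must transport it to $\tth{k}$ via \eqref{eq:gronwall1} and check that the accumulated multiplicative errors telescope into the single factor $\exp(\int_0^{\tth{k}}\mu)\le M$ rather than proliferating. As remarked just before the statement, this is also the one place in the whole analysis where the energy decrease \eqref{eq:alg.zupdate_lower} is actually needed: for a stationary point satisfying only \eqref{eq:opt.prop01}--\eqref{eq:opt.prop04} the recursion $b_k\le(b_{k-1}-\RR_\delta(\zth{k}-\zth{k-1}))\,m_k$ need not hold, and the telescoping argument collapses.
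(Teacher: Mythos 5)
Your argument is correct and takes exactly the route the paper indicates but does not spell out: the paper's ``proof'' is just the one-line remark that the estimates follow from \eqref{eq:gronwall1}--\eqref{eq:gronwall2}, coercivity \ref{ass:E2}, and the references \cite{fem_paramsol,knees17}. Your recursion $b_k\le\bigl(b_{k-1}-\RR_\delta(\zth{k}-\zth{k-1})\bigr)m_k$ obtained by combining \eqref{eq:alg.zupdate_lower} with \eqref{eq:gronwall1}, followed by dividing by the running product $P_k$ and telescoping, is precisely the standard Gronwall-type bookkeeping behind \eqref{eq:basic.02}; the passage from \eqref{eq:basic.02} to \eqref{eq:basic.03} via coercivity and the uniform boundedness of $\II(0,z_0^\delta)$ also matches what the paper does elsewhere (cf.\ the argument in the proof of Proposition~\ref{prop.finiteNumberSteps}). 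The small technical points you flag — the overshoot $\tth{N}>T$ absorbed by extending $\mu$ by zero, the positivity $\II+\beta\ge 0$ forced by \ref{ass:E3}, and the fact that only here is the energy-decrease condition \eqref{eq:alg.zupdate_lower} genuinely used — are all accurate and consistent with the paper's own remarks.
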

\begin{proof}
The proof mainly relies on the estimates in \eqref{eq:gronwall1} - \eqref{eq:gronwall2} and the coercivity of the energy from assumption \ref{ass:E2}, cf. \cite{fem_paramsol,knees17}.
\end{proof}

\begin{Remark}\label{rem:zk_in_ball}
As a consequence of Lemma~\ref{lem.basicest1} and the boundedness of $z_0^\delta$ by assumption we have that $z_k^\ind \in B_\ZZ(0,R) $ for some $R >0$ independent of $\tau$ and $\delta$.
\end{Remark}

The estimate \eqref{eq:basic.03} will, on the one hand, provide us with a uniform $L^\infty$-bound for the linear interpolants and, on the other hand, allows us to obtain a bound for the derivative $D_z\II$. In preparation for that, we derive the following:

\begin{Lemma}\label{lem:garding-combined}
For every $\rho>0$, there exists $C_1(\rho),C_2(\rho)>0$, such that 
\begin{multline}\label{eq:garding-combined}
\dual{D_z\II(t,v)-D_z\II(s,w)}{v-w}_{\ZZ^*, \ZZ} \\
\geq \frac{\alpha}{2} \norm{v-w}_\ZZ^2 - C_1(\rho) \, \norm{v-w}_\VV \, \RR_\delta(v-w) - C_2(\rho) \, (t-s)^2
\end{multline}
for all $v,w \in B_\ZZ(0,\rho)$ and $ t,s \in [0,T]$.
\end{Lemma}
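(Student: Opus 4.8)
Here is how I would prove Lemma~\ref{lem:garding-combined}.

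\medskip

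The plan is to reduce \eqref{eq:garding-combined} to the two structural hypotheses \eqref{eq:garding-like} and \eqref{eq:DzI-time-Lip} by a triangle-inequality splitting of the left-hand side, and then to absorb the two resulting lower-order terms — the $\norm{v-w}_\VV^2$ coming from the G\r{a}rding-like inequality and the mixed $\abs{t-s}\,\norm{v-w}_\ZZ$ coming from the time-Lipschitz estimate — by Young's inequality together with a compactness (Ehrling-type) interpolation.

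First I would write, for $v,w\in B_\ZZ(0,\rho)$ and $t,s\in[0,T]$,
\[
\dual{D_z\II(t,v)-D_z\II(s,w)}{v-w}_{\ZZ^*,\ZZ}
= \dual{D_z\II(t,v)-D_z\II(t,w)}{v-w}_{\ZZ^*,\ZZ}
+ \dual{D_z\II(t,w)-D_z\II(s,w)}{v-w}_{\ZZ^*,\ZZ}.
\]
Since $\norm{v}_\ZZ,\norm{w}_\ZZ\le\rho$, the G\r{a}rding-like inequality \eqref{eq:garding-like} (with this $\rho$) bounds the first term from below by $\alpha\norm{v-w}_\ZZ^2-c(\rho)\norm{v-w}_\VV^2$; since $\norm{w}_\ZZ\le\rho$ and $v-w\in\ZZ$, applying \eqref{eq:DzI-time-Lip} to the test element $v-w$ (and to its negative) bounds the second term from below by $-C(\rho)\,\abs{t-s}\,\norm{v-w}_\ZZ$. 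On this mixed term I would use Young's inequality, $C(\rho)\abs{t-s}\,\norm{v-w}_\ZZ\le\tfrac{\alpha}{4}\norm{v-w}_\ZZ^2+\tfrac{C(\rho)^2}{\alpha}(t-s)^2$, which already identifies $C_2(\rho):=C(\rho)^2/\alpha$ and leaves a spare coercive mass of $\tfrac{3\alpha}{4}\norm{v-w}_\ZZ^2$.

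The delicate point is the term $-c(\rho)\norm{v-w}_\VV^2$: assumption \ref{ass:R3} only controls the weaker norm $\norm{\cdot}_\XX$, so $\RR_\delta$ does not dominate $\norm{\cdot}_\VV$ directly, and one has to pay with the compactness of $\ZZ\embeds^{c,d}\VV\embeds\XX$. By Ehrling's lemma, for every $\varepsilon>0$ there is $C_\varepsilon>0$ with $\norm{u}_\VV\le\varepsilon\norm{u}_\ZZ+C_\varepsilon\norm{u}_\XX$ for all $u\in\ZZ$; combined with \ref{ass:R3} for $\RR_\delta$ this gives $\norm{v-w}_\VV\le\varepsilon\norm{v-w}_\ZZ+(C_\varepsilon/\kappa)\,\RR_\delta(v-w)$. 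Multiplying by the remaining factor $\norm{v-w}_\VV$, estimating $\norm{v-w}_\VV\le C\norm{v-w}_\ZZ$ (continuous embedding) in the $\varepsilon$-term only, and choosing $\varepsilon=\varepsilon(\rho)$ so small that $c(\rho)\,\varepsilon\,C\le\alpha/4$, one arrives at
\[
c(\rho)\,\norm{v-w}_\VV^2\le\frac{\alpha}{4}\,\norm{v-w}_\ZZ^2+\frac{c(\rho)\,C_{\varepsilon(\rho)}}{\kappa}\,\norm{v-w}_\VV\,\RR_\delta(v-w).
\]
Collecting the three estimates and setting $C_1(\rho):=c(\rho)\,C_{\varepsilon(\rho)}/\kappa$ then yields \eqref{eq:garding-combined}, since $\alpha-\tfrac{\alpha}{4}-\tfrac{\alpha}{4}=\tfrac{\alpha}{2}$.

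The main — essentially the only — obstacle is the one just highlighted: recognising that \ref{ass:R3} is not strong enough to absorb $\norm{v-w}_\VV^2$ on its own, so that the compactness of $\ZZ\embeds\VV$ must be exploited to trade the $\VV$-norm against a small multiple of the ``free'' $\ZZ$-norm plus the $\XX$-norm (which $\RR_\delta$ does control). After that it is purely bookkeeping to keep track that $c(\rho),C(\rho)$ from \eqref{eq:garding-like}--\eqref{eq:DzI-time-Lip}, and hence $\varepsilon(\rho)$, $C_1(\rho)$, $C_2(\rho)$, depend on $\rho$ but are independent of $\delta$, which is exactly what the subsequent $\delta$-uniform a priori estimates require.
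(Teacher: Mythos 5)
Your proof is correct and follows essentially the same route as the paper: the implicit triangle-inequality split of $D_z\II(t,v)-D_z\II(s,w)$ into a spatial and a temporal increment, the G\aa rding-like inequality \eqref{eq:garding-like} and the time-Lipschitz estimate \eqref{eq:DzI-time-Lip}, Young's inequality on the mixed term, and Ehrling's lemma together with \ref{ass:R3} and the embedding $\ZZ\embeds\VV$ to trade $c(\rho)\norm{v-w}_\VV^2$ against $\tfrac{\alpha}{4}\norm{v-w}_\ZZ^2$ plus a $\norm{v-w}_\VV\,\RR_\delta(v-w)$ term. The paper's proof simply fixes the specific choice $\varepsilon=\alpha/(4c_\VV c(\rho))$ in Ehrling's lemma, but this is the same calculation you describe.
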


\begin{proof}
According to Ehrling's lemma, for every $\varepsilon >0$, there exists a constant $C_\varepsilon$ such that 
\begin{equation} 
 \norm{z}_\VV \leq \varepsilon \norm{z}_\ZZ + C_{\varepsilon} \norm{z}_\XX \quad \forall\, z\in \ZZ.
 \label{eq:ehrling_ineq} 
\end{equation}
Combining the Garding-like inequality from \eqref{eq:garding-like} with \eqref{eq:DzI-time-Lip} we find
\begin{multline}\label{eq:aux-differenceDzI}
\dual{D_z\II(t,v)-D_z\II(s,w)}{v-w}_{\ZZ^*,\ZZ} \\ \geq \alpha \norm{v-w}_\ZZ^2 - c(\rho) \norm{v-w}_\VV^2 - C(\rho) \abs{t-s} \norm{v-w}_\ZZ.
\end{multline}
To proceed, we consider each of the two last terms separately. For the first one, we exploit \eqref{eq:ehrling_ineq} for $\varepsilon = \tfrac{\alpha}{4 c_{\VV} c(\rho)} $ (recall that $ c_\VV$ denotes the embedding constant of $\ZZ \embeds \VV$) to obtain
\begin{equation}\label{eq:diff-zk-VV}
\begin{aligned}
& c(\rho) \norm{v-w}_\VV^2 \\
&\qquad \leq ( \frac{\alpha}{4 c_\VV} \norm{v-w}_\ZZ + C(\alpha,\rho,c_\VV) \norm{v-w}_\XX ) \, \norm{v-w}_\VV \\
 &\qquad \leq \frac{\alpha}{4} \norm{v-w}_\ZZ^2 + C(\alpha,\rho,c_\VV,\kappa) \, \RR_\delta(v-w) \norm{v-w}_\VV 
\end{aligned}
\end{equation}
where we used the lower bound for $\RR_\delta$ from \ref{ass:R3} and the embedding $\ZZ \embeds \VV$ in the last line. Next, we turn to the last term in \eqref{eq:aux-differenceDzI}. For this, we take advantage of Young's inequality which gives
\begin{equation}\label{eq:diff-tk-zk-ZZ}
C(\rho) \, \abs{t-s} \, \norm{v-w}_\ZZ \leq \frac{\alpha}{4} \norm{v-w}_\ZZ^2 + C(\alpha,\rho) \, (t-s)^2 .
\end{equation}
Inserting \eqref{eq:diff-zk-VV} and \eqref{eq:diff-tk-zk-ZZ} in \eqref{eq:aux-differenceDzI} we eventually arrive at \eqref{eq:garding-combined}.

\end{proof}

Clearly, from the uniform boundedness of the iterates and $ \norm{z_{k+1}-z_k}_\VV \leq \tau$ we can conclude the following result.

\begin{Corollary}\label{lem:garding-combined-iter}
For all iterates $ z_k \in \ZZ$ there exists constants $C_1,C_2>0$ such that
\begin{multline}\label{eq:garding-combined-iter}
\dual{D_z\II(t_{k},z_{k+1})-D_z\II(t_{k-1},z_k)}{z_{k+1}-z_k}_{\ZZ^*, \ZZ} \\
\geq \frac{\alpha}{2} \norm{z_{k+1}-z_k}_\ZZ^2 - C_1 \, \tau \, \RR_\delta(z_{k+1}-z_k) - C_2 \, \tau \, (t_k-t_{k-1}) .
\end{multline}
\end{Corollary}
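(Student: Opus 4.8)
The corollary is a direct specialization of Lemma~\ref{lem:garding-combined}, so the plan is essentially a substitution together with two uniformity observations. First I would invoke Lemma~\ref{lem.basicest1}, more precisely Remark~\ref{rem:zk_in_ball}, which guarantees the existence of a radius $R>0$, independent of $\tau$ and $\delta$, such that every iterate satisfies $z_k^\ind \in B_\ZZ(0,R)$. This $R$ is the value of $\rho$ that I plug into Lemma~\ref{lem:garding-combined}: both $z_{k}$ and $z_{k+1}$ lie in $B_\ZZ(0,R)$, so with $v = z_{k+1}$, $w = z_k$, $t = t_k$ and $s = t_{k-1}$ the lemma yields
\begin{equation*}
\dual{D_z\II(t_{k},z_{k+1})-D_z\II(t_{k-1},z_k)}{z_{k+1}-z_k}_{\ZZ^*, \ZZ}
\geq \tfrac{\alpha}{2}\norm{z_{k+1}-z_k}_\ZZ^2 - C_1(R)\,\norm{z_{k+1}-z_k}_\VV\,\RR_\delta(z_{k+1}-z_k) - C_2(R)\,(t_k-t_{k-1})^2,
\end{equation*}
with constants $C_1(R), C_2(R)$ that no longer depend on the particular index $k$ (nor on $\tau,\delta$), since $R$ does not. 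Renaming $C_1 := C_1(R)$ and $C_2 := C_2(R)$ then already gives the desired uniform constants.

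It remains to replace the two ``squared'' step terms on the right-hand side by the ``linear-in-$\tau$'' terms appearing in \eqref{eq:garding-combined-iter}. For the dissipation term I use the constraint built into the scheme: from \eqref{eq:alg.timeupdate} (or equivalently the second identity in \eqref{eq:discr-complementarity}) one has $\norm{z_{k+1}-z_k}_\VV \leq \tau$, hence $\norm{z_{k+1}-z_k}_\VV\,\RR_\delta(z_{k+1}-z_k) \leq \tau\,\RR_\delta(z_{k+1}-z_k)$, which handles the middle term (the constant $C_1$ is unchanged because $\RR_\delta \geq 0$). For the time term I use the time update once more: $0 \leq t_k - t_{k-1} = \tau - \norm{z_{k+1}-z_k}_\VV \leq \tau$, so $(t_k - t_{k-1})^2 \leq \tau\,(t_k - t_{k-1})$, which converts $C_2(t_k-t_{k-1})^2$ into $C_2\,\tau\,(t_k-t_{k-1})$. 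Substituting these two estimates into the displayed inequality yields exactly \eqref{eq:garding-combined-iter}.

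There is essentially no obstacle here; the only point requiring a moment's care is the bookkeeping of the constants --- one must make sure that the $\rho$-dependence in Lemma~\ref{lem:garding-combined} is frozen at the single value $R$ coming from Remark~\ref{rem:zk_in_ball} so that $C_1, C_2$ are genuinely uniform over all $k$, $\tau$ and $\delta$, rather than index-dependent. Everything else is the two elementary estimates $ab \leq \tau b$ and $a^2 \leq \tau a$ valid whenever $0 \leq a \leq \tau$ and $b \geq 0$, applied with $a = \norm{z_{k+1}-z_k}_\VV$ and $a = t_k - t_{k-1}$ respectively.
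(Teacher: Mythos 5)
Your argument matches the paper's proof exactly: apply Lemma~\ref{lem:garding-combined} with $\rho = R$ from Remark~\ref{rem:zk_in_ball} so that the constants are uniform in $k$, $\tau$, $\delta$, and then replace $\norm{z_{k+1}-z_k}_\VV$ by $\tau$ (via the constraint) and $(t_k-t_{k-1})^2$ by $\tau\,(t_k-t_{k-1})$ (via $0 \leq t_k - t_{k-1} \leq \tau$). One minor index slip worth correcting: the time update gives $t_k - t_{k-1} = \tau - \norm{z_k - z_{k-1}}_\VV$, not $\tau - \norm{z_{k+1}-z_k}_\VV$ as you wrote, though the needed bound $0 \leq t_k - t_{k-1} \leq \tau$ is unaffected.
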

\begin{proof}
By Remark~\ref{rem:zk_in_ball} all iterates are bounded by some constant, i.e., $z_k \in B_\ZZ(0,R)$ for some $ R >0 $ for every $k \in \N$. Hence, combining \eqref{eq:garding-combined} with $ \norm{z_{k+1}-z_k}_\VV \leq \tau$ and $t_k-t_{k-1} \leq \tau$, we immediately have \eqref{eq:garding-combined-iter}.
\end{proof}

One major issue in the convergence analysis for parametrized 
solutions concerns the boundedness of the artificial time, even in the continuous setting, see, e.g., the discussion in \citep[p. 218]{mielkeroubi}. For the discrete counterpart, the artificial time reads $ s_n = \sum_{k=1}^n t_k - t_{k-1} + \norm{z_k-z_{k-1}}_\VV $. In order to bound this term, we need to estimate $ \sum_{k=1}^n \norm{z_k-z_{k-1}}_\VV $, which is purpose of the next proposition. Moreover, we will show that the physical end time $T$ is reached after a finite number of iterations, which guarantees that the algorithm finishes in a finite number of steps.
\begin{Proposition}[Bound on artificial time] \label{prop.finiteNumberSteps}
For every parameter $\delta,\tau>0$ there exists an index $N(\tau,\delta) \in \N$ such that $\tth{N(\tau,\delta)} \geq T$.
Moreover, there are constants $C_1,C_2,C_3>0$ independent of $\tau,\delta$ such that, for all $\delta,\tau>0$, it holds
\begin{align}
 \sum_{i=1}^{N(\tau,\delta)} \norm{\zth{i}-\zth{i-1}}_\VV &\leq C_1,  \label{eq:eq.boundSteps}\\
 \sum_{i=1}^{N(\tau,\delta)} \norm{\zth{i}-\zth{i-1}}_\ZZ^2 &\leq C_2 \, \tau, 
   \label{eq:eq.est_z_H1} \\
  \text{and} \quad \dist_{\VV^\ast}\{-D_z\Ik{k-1}{k}, \partial \RR_\delta(0)\} &\leq C_3 
   \quad \forall\, k = 1, ..., N(\tau,\delta). \label{eq:eq.est_dist} 
\end{align}
\end{Proposition}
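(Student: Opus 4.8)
The plan is to exploit the discrete energy identity/inequality together with the Gårding-combined estimate of Corollary~\ref{lem:garding-combined-iter} to telescope the relevant sums. The starting point is the stationarity condition in the form \eqref{eq:opt.prop03}, which I would apply at step $k$ and also (shifted) at step $k+1$. Testing the difference of the two identities, one obtains on the left-hand side the term $\dual{D_z\II(t_k,z_{k+1})-D_z\II(t_{k-1},z_k)}{z_{k+1}-z_k}_{\ZZ^*,\ZZ}$, to which Corollary~\ref{lem:garding-combined-iter} applies, yielding a lower bound $\tfrac{\alpha}{2}\norm{z_{k+1}-z_k}_\ZZ^2 - C_1\tau\RR_\delta(z_{k+1}-z_k) - C_2\tau(t_k-t_{k-1})$; on the right-hand side one gets differences of $\RR_\delta$-terms and of $\tau\,\dist_{\VV^*}\{-D_z\II,\partial\RR_\delta(0)\}$-terms, plus the dual pairing $\dual{-D_z\II(t_k,z_{k+1})}{z_k-z_{k-1}}$ which can be re-expressed using \eqref{eq:opt.prop04} at step $k$ (or, more cleanly, using \eqref{eq:opt.prop02}-\eqref{eq:opt.prop03} again). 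Summing over $k$ and using \eqref{eq:basic.02} from Lemma~\ref{lem.basicest1} to control $\sum\RR_\delta(z_i-z_{i-1})$ and to bound the energy, the $\RR_\delta$-differences telescope and the remaining $\dist$-terms must be shown to telescope as well after regrouping. This should produce \eqref{eq:eq.est_z_H1}, i.e.\ $\sum\norm{z_i-z_{i-1}}_\ZZ^2\leq C_2\tau$, once one absorbs the $C_1\tau\RR_\delta$-term via \eqref{eq:basic.02} and notes $\sum(t_k-t_{k-1})\leq T$ (or $\leq T+\tau$, using the finite-termination claim).

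For \eqref{eq:eq.boundSteps}, I would pass from the $\ZZ$-norm to the $\VV$-norm: since $\norm{z_i-z_{i-1}}_\VV\leq\tau$ and, by \eqref{eq:alg.timeupdate}, $\norm{z_i-z_{i-1}}_\VV=\tau-(t_i-t_{i-1})$, Cauchy–Schwarz gives
\begin{equation*}
\sum_{i=1}^N\norm{z_i-z_{i-1}}_\VV \leq \Big(\sum_{i=1}^N\norm{z_i-z_{i-1}}_\VV^2\Big)^{1/2}\Big(\sum_{i=1}^N 1\Big)^{1/2},
\end{equation*}
which is not directly useful since $N\sim 1/\tau$; instead I would use the embedding $\ZZ\embeds^c\VV$ together with \eqref{eq:eq.est_z_H1} in the form $\sum\norm{z_i-z_{i-1}}_\VV^2\leq c_\VV^2\sum\norm{z_i-z_{i-1}}_\ZZ^2\leq c_\VV^2 C_2\tau$ and the bound $\norm{z_i-z_{i-1}}_\VV\leq\tau$, so that $\sum\norm{z_i-z_{i-1}}_\VV\leq\frac1\tau\sum\norm{z_i-z_{i-1}}_\VV^2\leq c_\VV^2 C_2=:C_1$. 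The estimate \eqref{eq:eq.est_dist} then follows from \eqref{eq:opt.prop03}: the distance equals $\tfrac1\tau\dual{-D_z\II(t_{k-1},z_k)}{z_k-z_{k-1}}_{\ZZ^*,\ZZ}-\tfrac1\tau\RR_\delta(z_k-z_{k-1})$, and since $D_z\II$ is bounded on the uniform ball $B_\ZZ(0,R)$ (Remark~\ref{rem:zk_in_ball}, using \eqref{ass:I5} and continuity to get a uniform bound on $\norm{D_z\II(t,z)}_{\ZZ^*}$ for $z\in B_\ZZ(0,R)$, $t\in[0,T]$) while $\norm{z_k-z_{k-1}}_\ZZ\leq c\,\norm{z_k-z_{k-1}}_\VV\cdot(\text{something})$... more carefully, $\dist\leq\tfrac1\tau\norm{D_z\II}_{\ZZ^*}\norm{z_k-z_{k-1}}_\ZZ\leq\tfrac{C}{\tau}\norm{z_k-z_{k-1}}_\ZZ$, which combined with $\norm{z_k-z_{k-1}}_\ZZ^2\leq C_2\tau$ (a single summand of \eqref{eq:eq.est_z_H1}) gives $\dist\leq C\sqrt{C_2/\tau}$ — not uniform. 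So instead I expect one must use $\dist_{\VV^*}\{-D_z\II,\partial\RR_\delta(0)\}=\norm{\zeta_k}_{\VV^*}$ (established right before \eqref{eq:discr-complementarity}) together with \eqref{eq:opt.prop04}/\eqref{eq:subdiffR0}: $-\zeta_k-D_z\II(t_{k-1},z_k)\in\partial\RR_\delta(0)$, hence $\norm{\zeta_k}_{\VV^*}\leq\norm{D_z\II(t_{k-1},z_k)}_{\VV^*}+\sup_{w\in\partial\RR_\delta(0)}\norm{w}_{\VV^*}$, and both terms are bounded uniformly — the first by the uniform ball bound, the second because $\partial\RR_\delta(0)\subset\{\norm{\cdot}_{\XX^*}\leq$ const$\}$ by \ref{ass:R3}... but one needs $\VV^*$, not $\XX^*$. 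This is the delicate point.

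The main obstacle, I expect, is precisely \eqref{eq:eq.est_dist}: getting a bound on the distance that is \emph{uniform} in both $\tau$ and $\delta$ and does not degrade like $1/\sqrt{\tau}$. The resolution should be the identity $\dist_{\VV^*}\{-D_z\II(t_{k-1},z_k),\partial\RR_\delta(0)\}=\norm{\zeta_k^\ind}_{\VV^*}$ combined with the inclusion $-\zeta_k^\ind-D_z\II(t_{k-1},z_k)\in\partial\RR_\delta(0)$ from \eqref{eq:subdiffR0}, which forces $\norm{-\zeta_k^\ind-D_z\II(t_{k-1},z_k)}_{\VV^*}\leq C_{\RR}$ where $C_\RR$ is a uniform bound on $\norm{\cdot}_{\VV^*}$ over $\partial\RR_\delta(0)$; such a $C_\RR$ exists independent of $\delta$ because the Mosco-limit structure in \eqref{ass:Gamma-Conv-R} together with \ref{ass:R1}-\ref{ass:R3} propagates a uniform bound (alternatively, \ref{ass:R3} directly bounds $\partial\RR_\delta(0)$ in $\XX^*$, and one uses $\VV\hookrightarrow\XX$ dually). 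Then $\norm{\zeta_k^\ind}_{\VV^*}\leq C_\RR+\sup_{k,\tau,\delta}\norm{D_z\II(t_{k-1},z_k)}_{\VV^*}<\infty$ by Remark~\ref{rem:zk_in_ball}. The secondary obstacle is the bookkeeping in the telescoping argument for \eqref{eq:eq.est_z_H1}: one has to be careful that the $\dist$-terms arising on the right-hand side after the "test with the difference" step actually cancel in pairs (using the discrete complementarity \eqref{eq:discr-complementarity} to kill the $\dist$-term whenever $t_k>t_{k-1}$), and that the boundary terms $k=1$ and $k=N$ are controlled by Lemma~\ref{lem.basicest1} and the local stability of $z_0^\delta$. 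Finally, the finite-termination claim $\tth{N(\tau,\delta)}\geq T$ follows once \eqref{eq:eq.boundSteps} is in hand: since $t_k-t_{k-1}=\tau-\norm{z_k-z_{k-1}}_\VV$, summing gives $t_n\geq n\tau-C_1$, so $t_n\geq T$ for $n\geq(T+C_1)/\tau$.
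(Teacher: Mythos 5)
Your high-level plan for \eqref{eq:eq.est_z_H1} matches the paper's strategy (two consecutive stationarity identities, Corollary~\ref{lem:garding-combined-iter}, telescoping, boundary term at $k=1$ via local stability of $z_0^\delta$), but the mechanism is not quite the one you describe: the $\RR_\delta$-terms cancel exactly because one tests the inequality \eqref{eq:opt.prop04} at step $k$ with the \emph{next} increment $v=z_{k+1}^\ind-z_k^\ind$ and subtracts the identity \eqref{eq:opt.prop03} (after inserting \eqref{eq:opt.prop02}) written at index $k+1$; afterwards nothing like a ``$\dist$-telescoping after regrouping'' is needed — rather the $\zeta$-pairings telescope because $\dual{\zeta_k^\ind}{z_{k+1}^\ind-z_k^\ind}_{\VV^*,\VV}\le\norm{\zeta_k^\ind}_{\VV^*}\tau=\dual{\zeta_k^\ind}{z_k^\ind-z_{k-1}^\ind}_{\VV^*,\VV}$ by \eqref{eq:subdiffItau_char}. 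So the gist is right, but your account of the cancellation and of the role played by the distance terms is off.

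There are two genuine gaps. First, your key inequality for \eqref{eq:eq.boundSteps},
\begin{equation*}
\sum_i\norm{z_i^\ind-z_{i-1}^\ind}_\VV\leq\frac1\tau\sum_i\norm{z_i^\ind-z_{i-1}^\ind}_\VV^2,
\end{equation*}
is backwards: since $\norm{z_i^\ind-z_{i-1}^\ind}_\VV\leq\tau$, one has $\norm{z_i^\ind-z_{i-1}^\ind}_\VV^2\leq\tau\norm{z_i^\ind-z_{i-1}^\ind}_\VV$, hence $\frac1\tau\sum a_i^2\leq\sum a_i$, the opposite direction. The correct manipulation exploits the time-update exactly, writing $\tau=(t_i^\ind-t_{i-1}^\ind)+\norm{z_i^\ind-z_{i-1}^\ind}_\VV$ and hence
\begin{equation*}
\sum_i\norm{z_i^\ind-z_{i-1}^\ind}_\VV
=\sum_i(t_i^\ind-t_{i-1}^\ind)\frac{\norm{z_i^\ind-z_{i-1}^\ind}_\VV}{\tau}
+\frac1\tau\sum_i\norm{z_i^\ind-z_{i-1}^\ind}_\VV^2,
\end{equation*}
where the first sum is bounded by $t_N^\ind-t_0\leq T+\tau$ and the second is bounded by $c_\VV^2 C_2$ via \eqref{eq:eq.est_z_H1} and the embedding $\ZZ\embeds\VV$. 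Your omission of the $(t_i^\ind-t_{i-1}^\ind)$-contribution is precisely what produced the reversed inequality. Second, your route to \eqref{eq:eq.est_dist} relies on a uniform $\VV^*$-bound on $\partial\RR_\delta(0)$ and a $\VV^*$-bound on $D_z\II$, and neither is available here: \ref{ass:R3} is a \emph{lower} bound on $\RR_\delta$ and so gives no boundedness of $\partial\RR_\delta(0)$ in $\XX^*$ (the Example following Lemma~\ref{lem:dist_lsc} shows the dissipation can genuinely be unbounded, which is the whole point of the paper), and $D_z\II(t,z)$ lives in $\ZZ^*$ but need not lie in $\VV^*$. You correctly flag this as the delicate point, but the proposed resolution does not work. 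The actual argument is much shorter: the telescoping bound \eqref{eq:boundprelim} already yields $\dual{\zeta_{k+1}^\ind}{z_{k+1}^\ind-z_k^\ind}_{\VV^*,\VV}\leq C\tau$ for every $k$, and by \eqref{eq:opt.prop02} this pairing equals $\tau\dist_{\VV^*}\{-D_z\II(t_k^\ind,z_{k+1}^\ind),\partial\RR_\delta(0)\}$, so dividing by $\tau$ gives \eqref{eq:eq.est_dist} directly. Your argument for finite termination is correct and agrees with the paper.
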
\begin{proof}
The arguments are similar to \cite{knees17,fem_paramsol}. However, since there are some significant differences, particularly the estimate \eqref{eq:eq.est_z_H1}, we present the arguments in detail. Let $k \in \N$ be arbitrary. For convenience, we again suppress the superscript $\tau, \delta$ 
throughout the proof, except for $z_0^\ind$ in order to avoid confusion with the initial data. 
We start by testing \eqref{eq:opt.prop04} with $v = z_{k+1} - z_{k}$ to obtain
\begin{equation}\label{eq:eq1.aux006}
\begin{aligned}
 \RR_\delta(z_{k+1} - z_{k}) \geq - \dual{\zeta_k}{z_{k+1} - z_{k}}_{\VV^\ast,\VV} 
 - \dual{D_z\Ik{k-1}{k}}{z_{k+1} - z_{k}}_{\ZZ^\ast,\ZZ}. 
\end{aligned}
\end{equation}
Inserting \eqref{eq:opt.prop02} into \eqref{eq:opt.prop03} 
and rewriting this identity for the index $k+1$ (instead of $k$) gives $ \RR_\delta(z_{k+1}-z_{k}) + \dual{\zeta_{k+1}}{z_{k+1}-z_{k}}_{\VV^\ast,\VV} 
= \dual{-D_z\Ik{k}{k+1}}{z_{k+1}-z_{k}}_{\ZZ^\ast,\ZZ} $. Subtracting this from \eqref{eq:eq1.aux006} implies
\begin{multline*}
 0 \geq \dual{\zeta_{k+1}}{z_{k+1}-z_{k}}_{\VV^\ast,\VV} 
 - \dual{\zeta_k}{z_{k+1}-z_{k}}_{\VV^\ast,\VV} \\
 + \dual{D_z\Ik{k}{k+1} - D_z\Ik{k-1}{k}}{z_{k+1} - z_{k}}_{\ZZ^\ast,\ZZ} \, .
\end{multline*}
Thanks to the constraint $\norm{z_{k+1}-z_{k}}_\VV \leq \tau$ and \eqref{eq:subdiffItau_char}, that is $ \norm{\zeta_{k}}_{\VV^*} \, \tau = \dual{\zeta_k}{z_k-z_{k-1}}_{\VV^*,\VV}$, we have 
\[ \dual{\zeta_{k}}{z_{k+1}-z_{k}}_{\VV^\ast,\VV} \leq \norm{\zeta_{k}}_{\VV^*} \norm{z_{k+1}-z_{k}}_{\VV} \leq \norm{\zeta_{k}}_{\VV^*} \, \tau = \dual{\zeta_k}{z_k-z_{k-1}}_{\VV^*,\VV}  . \]
Consequently, it holds
\begin{multline*}
 0 \geq \dual{\zeta_{k+1}}{z_{k+1}-z_{k}}_{\VV^\ast,\VV} 
 - \dual{\zeta_k}{z_{k}-z_{k-1}}_{\VV^\ast,\VV} \\
 + \dual{D_z\Ik{k}{k+1} - D_z\Ik{k-1}{k}}{z_{k+1} - z_{k}}_{\ZZ^\ast,\ZZ} \, .
\end{multline*}
Now, inserting the estimate from Corollary~\ref{lem:garding-combined-iter} gives 
\begin{align*}
0 &\geq \dual{\zeta_{k+1}}{z_{k+1}-z_{k}}_{\VV^\ast,\VV} 
 - \dual{\zeta_k}{z_{k}-z_{k-1}}_{\VV^\ast,\VV} \\ & \qquad + \frac{\alpha}{2} \norm{z_{k+1}-z_k}_\ZZ^2 - C_1 \, \tau \, \RR_\delta(z_{k+1}-z_k) - C_2 \, \tau \, (t_k-t_{k-1}) 
\end{align*}
Rearranging terms and summing up the resulting estimate with respect to $k$, we arrive at
\begin{multline}\label{eq:eq.aux008}
 \dual{\zeta_{k+1}}{z_{k+1}-z_{k}}_{\VV^\ast,\VV}  + c \sum_{i=1}^k\norm{z_{i+1}-z_{i}}_{\ZZ}^2 \\
\leq \dual{\zeta_{1}}{z_{1}-z_{0}}_{\VV^\ast,\VV}  + C \tau \Big( t_{k} + \sum_{i=1}^k \RR_\delta(z_{i+1}-z_{i}) \Big) .
\end{multline}
Thanks to \eqref{eq:basic.02} it now suffices to estimate $\dual{\zeta_{1}}{z_{1}-z_{0}}_{\VV^\ast,\VV}$ to proof \eqref{eq:eq.est_z_H1}, which is shown next. To this end, we again insert \eqref{eq:opt.prop02} into \eqref{eq:opt.prop03} to obtain for $k=1$:
\[ \RR_\delta(z_1-z_0) + \dual{\zeta_{1}}{z_{1}-z_{0}}_{\VV^\ast,\VV} = \dual{-D_z\II(0,z_{1})}{z_{1}-z_0^\ind}_{\ZZ^\ast,\ZZ} \, .\]
Adding a zero, and rearranging terms yields
\begin{multline} \label{eq:alternative_lambda1}
\dual{-D_z\II(0,z_0^\ind)}{z_{1}-z_0^\ind}_{\ZZ^\ast,\ZZ} \geq \dual{D_z\II(0,z_{1})-D_z\II(0,z_0^\ind)}{z_{1}-z_0^\ind}_{\ZZ^\ast,\ZZ} \\ 
+ \RR_\delta(z_1-z_0^\ind) +  \dual{\zeta_{1}}{z_{1}-z_{0}}_{\VV^\ast,\VV} .
\end{multline}
By assumption, we have $ -D_z\II(0,z_0^\ind) \in \partial\RR_\delta(0) $ which gives $\dual{-D_z\II(0,z_0^\ind)}{z_1-z_0^\ind}_{\ZZ^*,\ZZ} \leq \RR_\delta(z_1-z_0^\ind) $ by the characterization in Lemma~\ref{lem:conv.ana}, so that \eqref{eq:alternative_lambda1} implies
\begin{equation} \label{eq:alternative_lambda1a}
\dual{D_z\II(0,z_{1})-D_z\II(0,z_0^\ind)}{z_{1}-z_0^\ind}_{\ZZ^\ast,\ZZ} +  \dual{\zeta_{1}}{z_{1}-z_{0}}_{\VV^\ast,\VV}
 \leq 0 .
\end{equation}
For the first term on the left-hand side we take advantage of the assumption in \eqref{eq:garding-like} and the boundedness of the iterates from Lemma~\ref{lem.basicest1} which results in
\begin{equation} \label{eq:alternative_lambda2}
 \alpha \norm{z_{1}-z_0^\ind}^2_\ZZ - C \norm{z_1-z_0^\ind}_\VV^2 +  \dual{\zeta_{1}}{z_{1}-z_{0}}_{\VV^\ast,\VV}
 \leq 0.
\end{equation}
Hence, using again the constraint $\norm{z_{k+1}-z_{k}}_\VV \leq \tau$ we obtain
\begin{equation}\label{eq:kgleich1}
  \dual{\zeta_{1}}{z_{1}-z_{0}}_{\VV^\ast,\VV} + c\|z_1 - z_0^\ind\|_\ZZ^2
 \leq C\,  \tau^2 .
\end{equation}
By adding \eqref{eq:kgleich1} to \eqref{eq:eq.aux008} and applying \eqref{eq:basic.02}, we arrive at
\begin{equation}\label{eq:sum_bound}
\begin{aligned}
&\dual{\zeta_{k+1}}{z_{k+1}-z_{k}}_{\VV^\ast,\VV} + c \sum_{i=0}^k\norm{z_{i+1}-z_{i}}_{\ZZ}^2 \\
 &\qquad \qquad \leq C \, \tau \, \Big( t_{k} + \sum_{i=0}^k \RR_\delta(z_{i+1}-z_{i}) 
 + \tau \Big) \\
 &\qquad \qquad \leq C \, \tau \Big(T + (\II(0,z_0^\ind) + \beta)\exp\left(\int_0^T \mu(s) \dd s \right) \Big),
\end{aligned}
\end{equation}
where we used that $t_k \leq T + \tau \leq 2 T $ by the time update in \eqref{eq:alg.timeupdate} and $ \tau \leq T$ for the last estimate. Clearly, by the uniform boundedness of $z_0^\ind$ by assumption and the continuity of $\II(0,\cdot)$, the term $\II(0,z_0^\ind)$ is also bounded
independent of $\tau$ and $\delta$, which yields that $T + (\II(0,z_0^\ind) + \beta)\exp\left(\int_0^T \mu(s) \dd s \right) \leq C $. This in turn implies
\begin{equation}\label{eq:boundprelim}
  \dual{\zeta_{k+1}}{z_{k+1}-z_{k}}_{\VV^\ast,\VV} + c \sum_{i=0}^k\norm{z_{i+1}-z_{i}}_{\ZZ}^2 \leq C \, \tau,
\end{equation}
which already gives \eqref{eq:eq.est_z_H1} for $k\geq 0$ since $ \dual{\zeta_{k+1}}{z_{k+1}-z_{k}}_{\VV^\ast,\VV} \geq 0$ by \eqref{eq:opt.prop02}. Moreover, \eqref{eq:eq.est_dist} is an easy consequence of the characterization in \eqref{eq:opt.prop02}. Note that the constant $C$ is independent of $\tau$, $\delta$, and $k$. 
%
Now, let us turn towards \eqref{eq:eq.boundSteps}. From the identity \eqref{eq:opt.prop02} we infer that $ \dual{\zeta_{k+1}}{z_{k+1}-z_{k}}_{\VV^\ast,\VV} \geq 0$. Moreover, thanks to the time-update it holds $t_{k+1}-t_k + \norm{z_{k+1}-z_{k}}_\VV = \tau$ so that
\begin{align}
\sum_{i=0}^k \norm{z_{i+1}-z_{i}}_{\VV} &= \frac{1}{\tau} \, \sum_{i=0}^k \norm{z_{i+1}-z_{i}}_{\VV} \, \big(t_{i+1}-t_i + \norm{z_{i+1}-z_{i}}_\VV \big) \notag \\
&= \sum_{i=0}^k (t_{i+1}-t_i)\frac{\norm{z_{i+1}-z_{i}}_{\VV}}{\tau} + \frac{1}{\tau} \, \sum_{i=0}^k \norm{z_{i+1}-z_{i}}_{\VV}^2 \notag \\
&\leq \sum_{i=0}^k (t_{i+1}-t_i) + \frac{1}{\tau} \, \sum_{i=0}^k \norm{z_{i+1}-z_{i}}_{\VV}^2 \notag \\
&\leq C \label{eq:est.Vnorm_k}
\end{align}
where we used \eqref{eq:boundprelim} together with the embedding $ \ZZ \embeds \VV$ as well as the fact that $ t_{k+1} \leq T + \tau \leq 2T$ for the last estimate. This verifies \eqref{eq:eq.boundSteps}. 
Finally, we show that the final time $T$ is reached after a finite number of steps. For this, we observe that by the embedding $\ZZ \embeds \VV$ estimate \eqref{eq:boundprelim} implies that $\sum_{k=1}^\infty \norm{z_k-z_{k-1}}_\VV$ is convergent, thus bounded. Summing up \eqref{eq:alg.timeupdate} from $k=1$ to $n$ and exploiting \eqref{eq:eq.boundSteps} we therefore obtain
\begin{equation*}
t_n = t_0 + n \tau - \sum_{k=1}^n \norm{z_k-z_{k-1}}_\VV \geq t_0 + n \tau - C \to \infty \quad \text{for } n \to \infty.
\end{equation*}
Hence, there must exist a finite index $N(\tau,\delta)$, possibly depending on $\tau$ and $\delta$, so that $t_{N(\tau,\delta)} \geq T$. Lastly, since \eqref{eq:boundprelim} and \eqref{eq:est.Vnorm_k} hold for every $k$, we obtain \eqref{eq:eq.boundSteps} and \eqref{eq:eq.est_z_H1}, respectively.
\end{proof}

In what follows we will abbreviate the index $N(\tau,\delta)$ simply by $N$ having in mind that the number $N$ of time steps always depends on $\tau$ and $\delta$.

\subsection{Discrete energy-equality}\label{sec:discrete-energy-eq}

In the following section, we aim at deriving a discrete analogon to the energy identity \eqref{eq:energyiddef}.
To this end, we introduce the piecewise affine as well as the left- and right-continuous 
piecewise constant interpolants associated with the iterates $z_k^\ind$. 
As indicated in the introduction, potential discontinuities 
of the parametrized solution are resolved by introducing an artificial time. The physical time is accordingly interpreted as a function of the very same and jumps are characterized by the plateaus of this function. This is also reflected by the time-incremental stationarity scheme (\nameref{alg:locmin}), where, loosely speaking, the artificial time is divided into equidistant subintervals with step size $\tau$ and the approximation of the parametrized solution is implicitly defined through the optimization in (\nameref{alg:locmin}). To be more precise, we set $ s_k^\ind := k\tau$, so that
\begin{equation}\label{eq:s_N-def}
\begin{aligned}
 s_N^\ind = N \tau &= \sum_{i=1}^N (t_i^\ind - t_{i-1}^\ind + \norm{z_i^\ind-z_{i-1}^\ind}_\VV ) \\[-2ex]
 &= t_N^\ind + \sum_{i=1}^N \norm{z_i^\ind-z_{i-1}^\ind}_\VV \leq T + \tau +  \sum_{i=1}^N \norm{z_i^\ind-z_{i-1}^\ind}_\VV \leq C_S
\end{aligned}
\end{equation}
by Proposition~\ref{prop.finiteNumberSteps} with a constant $C_S>0$ which is neither depending on $\tau$ nor $\delta$ so that the artificial time interval is indeed bounded. Hence, we can proceed with the construction of the interpolants. For $s \in [s_{k-1}^\ind,s_k^\ind) \subset [0,s_N^\ind)$, the continuous and piecewise affine interpolants are defined through
\begin{equation}
\begin{aligned}
\hat{z}_\ind(s) &:= \zth{k-1} + \frac{(s-s_{k-1}^\ind)}{\tau} ( \zth{k} - \zth{k-1}), \\
\hat{t}_\ind(s) &:= \tth{k-1} + \frac{(s-s_{k-1}^\ind)}{\tau} ( \tth{k} - \tth{k-1}), 
\label{eq:affine-interpolants} \\
\end{aligned}
\end{equation}
while the piecewise constant interpolants are given by
\begin{equation}\label{eq:constinter}
 \ovbar{z}_\ind(s) := \zth{k}, \qquad \ovbar{t}_\ind(s) := \tth{k}, \qquad 
 \ubar{z}_\ind(s) := \zth{k-1}, \qquad \ubar{t}_\ind(s) := \tth{k-1}.
\end{equation}
Moreover, we define the artificial end time $ S_\ind $ as that point where $ \hat{t}$ reaches the end time $T$, i.e., it holds (see also Figure~\ref{fig:affine_interpol_t})
\begin{equation}\label{eq:Sdef}
\hat{t}_\ind(S_\ind)=T, \quad s_{N-1}^\ind < S_\ind \leq s_N^\ind \quad \text{and} \quad S_\ind \leq C_S,
\end{equation}
whereby the boundedness follows directly from \eqref{eq:s_N-def}. 
Since the artificial end time $S_\ind$ depends on the chosen discretization level, 
we extend all interpolants constantly onto $[0,\tilde{S}]$ with $\tilde{S} := \sup_{\tau,h} S_\ind$ where this is necessary, i.e., where $ s_N^\ind < \tilde{S}$. Hence, we let
\begin{equation}\label{eq:affine-interpolants-extended}
\left.
\begin{alignedat}{4}
\ovbar{z}_\ind(s) &= \ubar{z}_\ind(s) &= \hat{z}_\ind(s) &:= \zth{N} \\
\text{and} \quad  \ovbar{t}_\ind(s) &= \ubar{t}_\ind(s) &= \hat{t}_\ind(s) &:= T
\end{alignedat}
\quad \right\} \quad \forall\, s \in [s_N^\ind,\tilde{S}] \,.
\end{equation}
Observe that still $\tilde{S} \leq C_S$ by \eqref{eq:Sdef}. Moreover, due to the time update in \eqref{eq:alg.timeupdate}, we clearly have that $(\hat{t}_\ind,\hat{z}_\ind) \in W^{1,\infty}(0,\tilde{S};\R) \times W^{1,\infty}(0,\tilde{S};\VV)$, but we even obtain the following pointwise properties. 
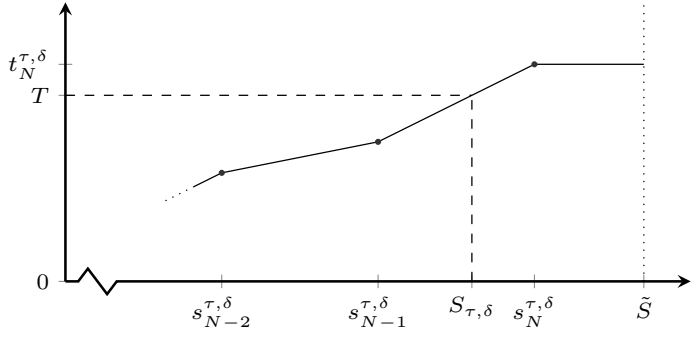
\begin{figure}
\centering
\begin{tikzpicture}[scale = 1.2]
     \begin{axis}[xmin=0,xmax=20,ymin=0,ymax=9,
        axis lines = left, axis line style = thick,  unit vector ratio=1 1 1, 
        axis x discontinuity=crunch,
        tick label style={font=\scriptsize},
        	xtick={5,10,13,15,18.5},
        xticklabels = {$s_{N-2}^\ind$, $s_{N-1}^\ind$, $S_\ind$, $s_N^\ind$, $\tilde{S}$},
        ytick={0,6,7},
        yticklabels = {$0$, $T$,$t_N^\ind$}]
        \addplot[mark=none, dashed] coordinates {(13, 0) (13, 6)}; 
        \addplot[domain=0:13, dashed] {6};
        \addplot[mark=none, dotted] coordinates {(18.5,0) (18.5,10)};
		\addplot[mark=none, dotted] coordinates {(3.2, 2.6) (4.1, 3.05)};
        \addplot[mark=none] coordinates {(4.1, 3.05) (5, 3.5)};
        \addplot[mark=none] coordinates {(5, 3.5) (10, 4.5)};
        \addplot[mark=none] coordinates {(10, 4.5) (15, 7)};
        \addplot[mark=none] coordinates {(15, 7) (18.5, 7)};

        \node at (axis cs:5,3.5) [circle, scale=0.2, draw=black!80,fill=black!80] {};
        \node at (axis cs:10,4.5) [circle, scale=0.2, draw=black!80,fill=black!80] {};
        \node at (axis cs:15,7) [circle, scale=0.2, draw=black!80,fill=black!80] {};
      \end{axis}
\end{tikzpicture}
\caption{Qualitative illustration of the affine interpolant $\hat{t}$, the choice of the artificial end time $S_\ind$ via the equality $\hat{t}(S_\ind)=T$ and the upper bound $\tilde{S}$.}
\label{fig:affine_interpol_t}
\end{figure}

\begin{Lemma}[Properties of affine interpolants] \label{lem.prop-affine-interpol}
For almost all $ s \in [0,S_\ind]$, the affine interpolants from \eqref{eq:affine-interpolants} fulfill
\begin{gather}
\hat{t}^\prime_\ind(s) \geq 0 , \qquad \hat{t}_\ind^\prime(s) + \norm{\hat{z}_\ind^\prime(s)}_\VV = 1, \label{eq:affine.aux1}\\
\hat{t}_\ind^\prime(s) \dist_{\VV^\ast}\{-D_z\II(\ubar{t}_\ind(s),\ovbar{z}_\ind(s)), \partial \RR_\delta(0)\} = 0.
 \label{eq:affine.aux2}
\end{gather}
\end{Lemma}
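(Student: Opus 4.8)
The plan is to argue subinterval by subinterval. Fix $k \in \{1,\dots,N\}$ and take $s$ in the open interval $(s_{k-1}^\ind, s_k^\ind)$. Since $[0,S_\ind]$ is covered by finitely many such intervals --- the last relevant one being $(s_{N-1}^\ind, S_\ind) \subset (s_{N-1}^\ind, s_N^\ind)$ because $s_{N-1}^\ind < S_\ind \le s_N^\ind$ by \eqref{eq:Sdef} --- this captures almost every $s \in [0,S_\ind]$, the finitely many breakpoints forming a null set. On such an interval the defining formulas \eqref{eq:affine-interpolants} are genuinely affine, hence differentiable, with $\hat z_\ind'(s) = \tfrac1\tau(\zth{k}-\zth{k-1})$ and $\hat t_\ind'(s) = \tfrac1\tau(\tth{k}-\tth{k-1})$, while $\ubar t_\ind(s) = \tth{k-1}$ and $\ovbar z_\ind(s) = \zth{k}$ by \eqref{eq:constinter}.

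The claim then follows by quoting the discrete complementarity relations collected in \eqref{eq:discr-complementarity}, which were already derived from Lemma~\ref{prop.optimalityprops} together with the time update \eqref{eq:alg.timeupdate}. The first of these relations, $\tfrac{\tth{k}-\tth{k-1}}{\tau}\ge 0$, yields $\hat t_\ind'(s)\ge 0$; the second, $\tfrac{\tth{k}-\tth{k-1}}{\tau} + \tfrac{\norm{\zth{k}-\zth{k-1}}_\VV}{\tau} = 1$, together with $\norm{\hat z_\ind'(s)}_\VV = \tfrac1\tau\norm{\zth{k}-\zth{k-1}}_\VV$ (positive homogeneity of the norm), yields $\hat t_\ind'(s) + \norm{\hat z_\ind'(s)}_\VV = 1$; and the third, $\bigl(\tfrac{\tth{k}-\tth{k-1}}{\tau}\bigr)\dist_{\VV^\ast}\{-D_z\Ih{k-1}{k},\partial\RR_\delta(0)\} = 0$, is precisely \eqref{eq:affine.aux2} after substituting $\ubar t_\ind(s) = \tth{k-1}$ and $\ovbar z_\ind(s) = \zth{k}$ in the distance term. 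If one prefers to be self-contained, the same three facts can be read off directly from Lemma~\ref{prop.optimalityprops}: combining \eqref{eq:opt.prop02}, \eqref{eq:opt.prop04} and $\dual{\zeta_k^\ind}{\zth{k}-\zth{k-1}}_{\VV^\ast,\VV} = \tau\norm{\zeta_k^\ind}_{\VV^\ast}$ gives $\dist_{\VV^\ast}\{-D_z\Ih{k-1}{k},\partial\RR_\delta(0)\} = \norm{\zeta_k^\ind}_{\VV^\ast}$, while \eqref{eq:opt.prop01} forces $\norm{\zeta_k^\ind}_{\VV^\ast}$ to vanish whenever $\tth{k}-\tth{k-1} = \tau - \norm{\zth{k}-\zth{k-1}}_\VV > 0$.

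The only point requiring a word of care is the restriction to $[0,S_\ind]$. On the extended range $(s_N^\ind,\tilde S]$ the interpolants are constant by \eqref{eq:affine-interpolants-extended}, so there $\hat t_\ind' + \norm{\hat z_\ind'}_\VV = 0$ and the normalization in \eqref{eq:affine.aux1} would fail; but on $(s_{N-1}^\ind,S_\ind)$ we are still strictly inside the $N$-th genuine subinterval, so the affine formulas apply with $k = N$ and the argument above goes through unchanged. There is no substantial obstacle: the lemma is merely a reformulation of the discrete optimality system \eqref{eq:opt.prop01}--\eqref{eq:opt.prop04} and the time update in the language of the interpolants, so the work is purely bookkeeping --- matching $\ubar t_\ind \leftrightarrow \tth{k-1}$ and $\ovbar z_\ind \leftrightarrow \zth{k}$ correctly, and excluding the finitely many breakpoints together with the constant tail.
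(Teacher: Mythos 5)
Your proof is correct and follows exactly the route the paper takes: the paper's own proof is the one-line remark that the statements are a direct consequence of the discrete complementarity relations \eqref{eq:discr-complementarity}, and you simply spell out the bookkeeping (differentiating the affine interpolants on each open subinterval, matching $\ubar t_\ind \leftrightarrow \tth{k-1}$ and $\ovbar z_\ind \leftrightarrow \zth{k}$, and discarding the finitely many breakpoints). No gaps; the extra care about $(s_{N-1}^\ind, S_\ind]$ versus the constant extension is a sensible clarification that the paper leaves implicit.
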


\begin{proof}
The statements are a direct consequence of the properties in \eqref{eq:discr-complementarity}.
\end{proof}

Once more, we note the similarity between the continuous case in \eqref{eq:degenerate} and \eqref{eq:slack} and its discrete version in Lemma~\ref{lem.prop-affine-interpol}. In the subsequent, last preparatory lemma, we collect the main a priori bounds of our interpolants, which will be essential to pass to the limit in the discrete energy identity, which is elaborated afterwards.

\begin{Lemma}\label{lem:unibound}
There exists $C>0$, independent of $\tau$ and $\delta$, so that 
 \begin{equation*}
\|\hat t_\ind\|_{W^{1,\infty}(0,\tilde S)}, \|\hat z_\ind\|_{W^{1,\infty}(0,\tilde S; \VV)}, \|\hat z_\ind\|_{L^{\infty}(0,\tilde S; \ZZ)}, \|\hat z_\ind\|_{H^{1}(0,\tilde S; \ZZ)}  \leq C.
 \end{equation*}
\end{Lemma}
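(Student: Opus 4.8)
The statement is really a translation of the discrete a priori bounds of the previous subsections into the language of the interpolants, so the plan is to treat the four norms one after another, in each case unwinding the definitions \eqref{eq:affine-interpolants}--\eqref{eq:affine-interpolants-extended} and invoking the appropriate estimate. The only genuinely quantitative point is the $H^1(0,\tilde S;\ZZ)$ bound, where the scaling of the equidistant artificial-time grid $s_k^\ind = k\tau$ has to be exploited; everything else is an immediate consequence of the pointwise properties in Lemma~\ref{lem.prop-affine-interpol} and the boundedness of the iterates.

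First I would dispatch $\hat t_\ind$. Since $\hat t_\ind$ is piecewise affine with nodal values $\hat t_\ind(s_k^\ind) = \tth{k}\in[0,t_N^\ind]$ and $t_N^\ind \le T+\tau\le 2T$ by the time update \eqref{eq:alg.timeupdate}, while on $[s_N^\ind,\tilde S]$ it is extended by the constant $T$ (cf.\ \eqref{eq:affine-interpolants-extended}), one gets $\|\hat t_\ind\|_{L^\infty(0,\tilde S)}\le 2T$. For the derivative, on each cell $[s_{k-1}^\ind,s_k^\ind)$ one has $\hat t_\ind^\prime(s) = (\tth{k}-\tth{k-1})/\tau = 1 - \|\zth{k}-\zth{k-1}\|_\VV/\tau \in[0,1]$ by \eqref{eq:alg.timeupdate} and the $\tau$-ball constraint $\|\zth{k}-\zth{k-1}\|_\VV\le\tau$, and $\hat t_\ind^\prime = 0$ on the extension; hence $\|\hat t_\ind^\prime\|_{L^\infty(0,\tilde S)}\le 1$ (this is also immediate from $\hat t_\ind^\prime\ge 0$ and $\hat t_\ind^\prime + \|\hat z_\ind^\prime\|_\VV = 1$ in \eqref{eq:affine.aux1}).

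Next the two $\hat z_\ind$-bounds that do not see the $\ZZ$-derivative. By Remark~\ref{rem:zk_in_ball} all iterates lie in $B_\ZZ(0,R)$ with $R$ independent of $\tau,\delta$, and for $s\in[s_{k-1}^\ind,s_k^\ind)$ the value $\hat z_\ind(s)$ is a convex combination of $\zth{k-1}$ and $\zth{k}$; likewise $\hat z_\ind\equiv\zth{N}$ on the extension. Thus $\|\hat z_\ind(s)\|_\ZZ\le R$ for all $s$, which is the $L^\infty(0,\tilde S;\ZZ)$ bound and, via the continuous embedding $\ZZ\embeds\VV$, also gives the $L^\infty(0,\tilde S;\VV)$ bound. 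For the $\VV$-derivative, $\hat z_\ind^\prime(s) = (\zth{k}-\zth{k-1})/\tau$ on $[s_{k-1}^\ind,s_k^\ind)$, so $\|\hat z_\ind^\prime(s)\|_\VV = \|\zth{k}-\zth{k-1}\|_\VV/\tau\le 1$ by the constraint (and $=0$ on the extension); together this yields $\|\hat z_\ind\|_{W^{1,\infty}(0,\tilde S;\VV)}\le C$.

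Finally the $H^1(0,\tilde S;\ZZ)$ bound, the only place where more than the $\tau$-ball constraint enters. Using again $\hat z_\ind^\prime(s) = (\zth{k}-\zth{k-1})/\tau$ on the cell $[s_{k-1}^\ind,s_k^\ind)$ of length $\tau$, and that $\hat z_\ind^\prime$ vanishes on $[s_N^\ind,\tilde S]$, one computes
\[
\int_0^{\tilde S}\|\hat z_\ind^\prime(s)\|_\ZZ^2\,\dd s \;=\; \sum_{k=1}^{N}\tau\cdot\frac{\|\zth{k}-\zth{k-1}\|_\ZZ^2}{\tau^2} \;=\; \frac{1}{\tau}\sum_{k=1}^{N}\|\zth{k}-\zth{k-1}\|_\ZZ^2 \;\le\; C_2,
\]
where the last inequality is precisely \eqref{eq:eq.est_z_H1} of Proposition~\ref{prop.finiteNumberSteps}, which furnishes the compensating factor $\tau$. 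Combined with $\|\hat z_\ind\|_{L^2(0,\tilde S;\ZZ)}^2\le R^2\tilde S\le R^2 C_S$ (using $\tilde S\le C_S$ from \eqref{eq:Sdef}), this gives the $H^1(0,\tilde S;\ZZ)$ bound. All constants are independent of $\tau$ and $\delta$ because $C_S$, the radius $R$ of Remark~\ref{rem:zk_in_ball}, and $C_2$ of \eqref{eq:eq.est_z_H1} are. I do not expect a real obstacle here: the subtle point is merely to notice that the singular factor $1/\tau^2$ produced by differentiating the affine interpolant is killed by the cell length $\tau$ together with the $\mathcal O(\tau)$ bound \eqref{eq:eq.est_z_H1}, and to keep the bookkeeping on the extension interval $[s_N^\ind,\tilde S]$ straight, where all interpolants are constant and hence contribute nothing to the derivative norms.
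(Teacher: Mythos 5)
Your proof is correct and follows essentially the same approach as the paper: the first three bounds are obtained from the pointwise properties in Lemma~\ref{lem.prop-affine-interpol}, the $\tau$-ball constraint, and the uniform $\ZZ$-bound on the iterates (Lemma~\ref{lem.basicest1}, Remark~\ref{rem:zk_in_ball}), and the $H^{1}(0,\tilde S;\ZZ)$ bound is obtained by the identical calculation yielding $\tau^{-1}\sum_k\|\zth{k}-\zth{k-1}\|_\ZZ^2$ and invoking \eqref{eq:eq.est_z_H1}. The paper merely declares the first three bounds ``immediate,'' while you spell them out; the substantive step is the same in both.
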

\begin{proof}
While the first three bounds are an immediate consequence of the results in Lemma~\ref{lem.prop-affine-interpol} and Lemma~\ref{lem.basicest1}, the last one requires some slighlty more explanation. Due to the bound in $L^\infty(0,\tilde{S};\ZZ)$, it suffices to estimate the $L^2(0,\tilde{S};\ZZ)$-norm of the time-derivative $\hat{z}_\ind^\prime$. Hence, inserting the definition of $\hat{z}$ from \eqref{eq:affine-interpolants-extended} and keeping in mind that $S_\ind \leq s_N^\ind$, we have
\begin{multline*}
\norm{\hat{z}_\ind^\prime}_{L^2(0,\tilde{S};\ZZ)}^2 
= \int_0^{S_\ind} \norm{\hat{z}_\ind^\prime(r)}_\ZZ^2 \dd r \\ 
\leq \sum_{k=1}^N \int_{s_{k-1}^\ind}^{s_k^\ind} \vnorm{\frac{z_k^\ind-z_{k-1}^\ind}{\tau}}_\ZZ^2 \dd r 
= \frac{1}{\tau}  \sum_{k=1}^N \norm{z_k^\ind-z_{k-1}^\ind}_\ZZ^2 .
\end{multline*}
Lemma~\ref{prop.finiteNumberSteps}, precisely estimate \eqref{eq:eq.est_z_H1}, thus implies that this term is bounded independent of $\tau$ and $\delta$, which proves the desired $H^1(0,\tilde{S};\ZZ)$ estimate.
\end{proof}

Eventually, we are now in the position to show a discrete version of the energy equality. 
Its proof is based on Lemma~\ref{lem.prop-affine-interpol}, Lemma~\ref{lem:unibound} and the a priori estimates derived in Section~\ref{sec:apriori}. 

\begin{Lemma}[Discrete energy equality]\label{prop:discrete-eneq}
For all $s \in [0,S_\ind]$, it holds
\begin{equation}\label{eq:eneq+rem}
\begin{aligned}
 & \II(\hat{t}_\ind(s),\hat{z}_\ind(s)) \\
 & \quad  + \int_{0}^{s} \RR_\delta(\hat{z}^\prime_\ind(\sigma)) 
 + \dist_{\VV^\ast}\{- D_z\II(\ubar{t}_\ind(\sigma),\ovbar{z}_\ind(\sigma)), \partial \RR_\delta (0)\} \dd \sigma \\
 &= \II(\hat{t}_\ind(0),\hat{z}_\ind(0)) \\
 &\quad + \int_{0}^{s} \partial_t \II(\hat{t}_\ind(\sigma),\hat{z}_\ind(\sigma))\, \hat{t}^\prime_\ind(\sigma) \dd \sigma 
 + \int_{0}^{s} r_\ind(\sigma) \dd \sigma \, ,
\end{aligned}
\end{equation}
where 
\begin{equation}\label{eq:defremainder}
 r_\ind(s) := 
 \dual{D_z\II(\hat{t}_\ind(s),\hat{z}_\ind(s)) - D_z\II(\ubar{t}_\ind(s),\ovbar{z}_\ind(s))}{\hat{z}_\ind^\prime(s)}_{\ZZ^*, \ZZ} \, .
\end{equation}
Moreover, the complementarity condition 
\begin{equation}
 \hat{t}_\ind^\prime(s) \dist_{\VV^\ast}\{- D_z\II(\ubar{t}_\ind(s),\ovbar{z}_\ind(s)),\partial\RR_\delta(0)\} = 0 
\label{eq:comp.cond}
\end{equation}
is fulfilled f.a.a. $s\in (0,S_\ind)$, and there exists a constant $C>0$ such that the remainder $r_\ind$ satisfies 
for all $\tau,\delta >0$ and all $s \in [0,S_\ind]$ 
\begin{equation}
 \int_{0}^{s} r_\ind(\sigma) \dd \sigma \leq C \tau .
\label{eq:remainder}
\end{equation}
\end{Lemma}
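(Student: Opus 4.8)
The plan is to establish the energy equality by summing up the discrete energy balances over the subintervals $[s_{k-1}^\ind, s_k^\ind]$ and then interpreting the resulting telescoping sum as an integral involving the affine interpolants, while carefully tracking the error terms that arise from the mismatch between the affine interpolant $\hat z_\ind$ and the piecewise constant interpolant $\ovbar z_\ind$. First I would derive a one-step energy balance: on each subinterval, using the fundamental theorem of calculus for the $C^1$ functional $\II$ along the affine path $s \mapsto (\hat t_\ind(s), \hat z_\ind(s))$, I can write
\[
\II(\tth{k}, \zth{k}) - \II(\tth{k-1}, \zth{k-1}) = \int_{s_{k-1}^\ind}^{s_k^\ind} \Big( \partial_t \II(\hat t_\ind, \hat z_\ind) \hat t_\ind' + \dual{D_z\II(\hat t_\ind, \hat z_\ind)}{\hat z_\ind'}_{\ZZ^*,\ZZ} \Big) \dd \sigma.
\]
Then I would replace $D_z\II(\hat t_\ind, \hat z_\ind)$ by $D_z\II(\ubar t_\ind, \ovbar z_\ind)$ in the last term, which generates exactly the remainder $r_\ind$ from \eqref{eq:defremainder}. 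For the term with $D_z\II(\ubar t_\ind, \ovbar z_\ind)$, on $[s_{k-1}^\ind, s_k^\ind]$ one has $\hat z_\ind'(\sigma) = (\zth{k}-\zth{k-1})/\tau$, so $\int_{s_{k-1}^\ind}^{s_k^\ind} \dual{D_z\II(\ubar t_\ind,\ovbar z_\ind)}{\hat z_\ind'}_{\ZZ^*,\ZZ}\dd\sigma = \dual{D_z\Ih{k-1}{k}}{\zth{k}-\zth{k-1}}_{\ZZ^*,\ZZ}$, and by the key identity \eqref{eq:opt.prop03} this equals $-\RR_\delta(\zth{k}-\zth{k-1}) - \tau\, \dist_{\VV^\ast}\{-D_z\Ih{k-1}{k},\partial\RR_\delta(0)\}$. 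Since $\RR_\delta$ is positively $1$-homogeneous, $\RR_\delta(\zth{k}-\zth{k-1}) = \int_{s_{k-1}^\ind}^{s_k^\ind} \RR_\delta(\hat z_\ind'(\sigma))\dd\sigma$, and since $\ovbar z_\ind, \ubar t_\ind$ are constant on the subinterval and $\tau$ is the length, $\tau\,\dist_{\VV^\ast}\{\cdots\} = \int_{s_{k-1}^\ind}^{s_k^\ind} \dist_{\VV^\ast}\{-D_z\II(\ubar t_\ind,\ovbar z_\ind),\partial\RR_\delta(0)\}\dd\sigma$. Summing over $k$ from $1$ to the index of the subinterval containing $s$ (and treating the final partial subinterval up to $s$ separately, using that all interpolants are affine there too) yields \eqref{eq:eneq+rem}; the boundedness estimates of Lemma~\ref{lem:unibound} guarantee every integrand is in $L^1$.

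Next, the complementarity condition \eqref{eq:comp.cond} is immediate: it is precisely \eqref{eq:affine.aux2} from Lemma~\ref{lem.prop-affine-interpol}, valid for a.a.\ $s \in (0, S_\ind)$, restated here for convenience.

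The main work — and the step I expect to be the main obstacle — is the remainder bound \eqref{eq:remainder}. Here I would split $r_\ind$ into a time-part and a space-part by inserting an intermediate term $D_z\II(\ubar t_\ind, \hat z_\ind)$:
\[
r_\ind = \dual{D_z\II(\hat t_\ind, \hat z_\ind) - D_z\II(\ubar t_\ind, \hat z_\ind)}{\hat z_\ind'}_{\ZZ^*,\ZZ} + \dual{D_z\II(\ubar t_\ind, \hat z_\ind) - D_z\II(\ubar t_\ind, \ovbar z_\ind)}{\hat z_\ind'}_{\ZZ^*,\ZZ}.
\]
For the first (time) part I would use the time-Lipschitz estimate \eqref{eq:DzI-time-Lip} together with $|\hat t_\ind(\sigma) - \ubar t_\ind(\sigma)| \le \tth{k}-\tth{k-1} \le \tau$ on each subinterval and Young's inequality, so that after integrating, the $L^2(0,\tilde S;\ZZ)$-bound on $\hat z_\ind'$ from Lemma~\ref{lem:unibound} and $\tilde S \le C_S$ produce a bound of order $\tau$. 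For the second (space) part, since $\hat z_\ind(\sigma)$ and $\ovbar z_\ind(\sigma) = \zth{k}$ both lie in the ball $B_\ZZ(0,R)$ (Remark~\ref{rem:zk_in_ball}) and their difference is a multiple of $\zth{k}-\zth{k-1}$, the cleanest route is to apply the G\r{a}rding-combined estimate — more precisely Corollary~\ref{lem:garding-combined-iter} rearranged — to control $\dual{D_z\II(\ubar t_\ind,\hat z_\ind) - D_z\II(\ubar t_\ind, \ovbar z_\ind)}{\hat z_\ind'}_{\ZZ^*,\ZZ}$ on $[s_{k-1}^\ind, s_k^\ind]$ in terms of $\frac{1}{\tau}\|\zth{k}-\zth{k-1}\|_\ZZ^2$, $\RR_\delta(\zth{k}-\zth{k-1})$ and $\tth{k}-\tth{k-1}$. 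Integrating over the subinterval (length $\tau$) and summing over $k$, the three resulting sums are controlled respectively by $C\tau$ via \eqref{eq:eq.est_z_H1}, by $C\tau$ via \eqref{eq:basic.02} (the summed dissipation is uniformly bounded), and by $C\tau$ since $\sum_k (\tth{k}-\tth{k-1}) = \tth{N} \le 2T$. Altogether $\int_0^s r_\ind(\sigma)\dd\sigma \le C\tau$ with $C$ independent of $\tau$ and $\delta$, which is \eqref{eq:remainder}. The delicate point throughout is ensuring that no constant secretly depends on $\tau$ or $\delta$ — this is precisely why the uniform-in-$(\tau,\delta)$ a priori estimates of Section~\ref{sec:apriori} and Lemma~\ref{lem:unibound} were set up beforehand.
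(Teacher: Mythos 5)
Your derivation of the energy identity itself and of the complementarity condition \eqref{eq:comp.cond} matches the paper's proof essentially step by step: apply the chain rule to $\sigma \mapsto \II(\hat t_\ind(\sigma),\hat z_\ind(\sigma))$, insert the piecewise constant interpolants to create the remainder $r_\ind$, use \eqref{eq:opt.prop03} with the $1$-homogeneity of $\RR_\delta$, and integrate over each subinterval; the complementarity is exactly \eqref{eq:affine.aux2}.

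The gap is in the remainder bound \eqref{eq:remainder}. You propose to apply the ``G\r{a}rding-combined'' estimate (Lemma~\ref{lem:garding-combined} / Corollary~\ref{lem:garding-combined-iter}) to the pair $v=\hat z_\ind(\sigma)$, $w=\ovbar z_\ind(\sigma)$, but that estimate is \emph{not} symmetric in $(v,w)$: its right-hand side contains $\RR_\delta(v-w)$, which was smuggled in via \ref{ass:R3} and Ehrling and which does not satisfy $\RR_\delta(-\xi)=\RR_\delta(\xi)$. On $[s_{k-1}^\ind,s_k^\ind)$ one has
$\hat z_\ind(\sigma)-\ovbar z_\ind(\sigma)=(\sigma-s_k^\ind)\hat z_\ind'(\sigma)$
with $\sigma-s_k^\ind<0$, i.e., the difference is a \emph{negative} multiple of $\zth{k}-\zth{k-1}$. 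In the unbounded-dissipation setting (which is the whole point of this paper — see \eqref{eq:diss_damage}) one then has $\RR_\delta(\hat z_\ind(\sigma)-\ovbar z_\ind(\sigma))=+\infty$, and Lemma~\ref{lem:garding-combined} degenerates to a trivial bound. In addition, Corollary~\ref{lem:garding-combined-iter} is stated for the discrete iterates $\zth{k}$, not for intermediate interpolant values, so it does not apply directly anyway.

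What the paper does instead (and what actually closes the gap) is to go back to the raw G\r{a}rding-like inequality \eqref{eq:garding-like}, which is symmetric in $z_1,z_2$ because it only involves $\norm{\cdot}_\VV$. Writing $r_\ind(\sigma)$ as $\frac{1}{\sigma-s_k^\ind}$ times a duality pairing against $\hat z_\ind(\sigma)-\ovbar z_\ind(\sigma)$, the negative prefactor \emph{reverses} the G\r{a}rding lower bound into an upper bound; the term $+\alpha\norm{\hat z_\ind-\ovbar z_\ind}_\ZZ^2$ acquires a minus sign and is simply discarded, and the surviving contribution is $|\sigma-s_k^\ind|\,\norm{\hat z_\ind'(\sigma)}_\VV^2\le \tau\norm{\hat z_\ind'(\sigma)}_\VV^2$. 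This also shows that your list of ``three sums to control'' is off: the $\tfrac{1}{\tau}\norm{\zth{k}-\zth{k-1}}_\ZZ^2$ contribution enters with a favorable (negative) sign and needs no estimate at all, there is no $\RR_\delta$-contribution in the correct bound, and the only surviving pieces are $\tau\norm{\hat z_\ind'}_\VV^2$ and $\tau\norm{\hat z_\ind'}_\ZZ$, which integrate to $C\tau$ via Lemma~\ref{lem:unibound}. Your time-part treatment via \eqref{eq:DzI-time-Lip} (with $z=\hat z_\ind$ rather than the paper's $z=\ovbar z_\ind$) is fine as is.
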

\begin{proof}
The complementarity in \eqref{eq:comp.cond} has already been proven in Lemma~\ref{lem.prop-affine-interpol}. Hence, we turn to the discrete energy identity. 
Since the affine interpolants in \eqref{eq:affine-interpolants} are by construction elements of $W^{1,\infty}(0,S_\ind)$ and 
$W^{1,\infty}(0,S_\ind;\ZZ)$, respectively, and due to $\II \in C^1([0,T] \times \ZZ)$ by assumption, 
the chain rule is applicable and gives for $s \in (s_{k-1}^\ind,s_k^\ind)$ that
\begin{align*}
 & \frac{\textup{d}}{\textup{d}s} \II(\hat{t}_\ind(s),\hat{z}_\ind(s)) \\
 &\qquad =\partial_t \II(\hat{t}_\ind(s),\hat{z}_\ind(s))\, \hat{t}^\prime_\ind(s) 
 + \dual{D_z\II(\hat{t}_\ind(s),\hat{z}_\ind(s))}{\hat{z}_\ind^\prime(s)}_{\ZZ^\ast,\ZZ} \\
 &\qquad =\partial_t \II(\hat{t}_\ind(s),\hat{z}_\ind(s))\, \hat{t}^\prime_\ind(s) 
 + \frac{1}{\tau}\dual{D_z\II(\ubar{t}_\ind(s),\ovbar{z}_\ind(s))}{\zth{k}-\zth{k-1}}_{\ZZ^\ast,\ZZ} \\
 &\qquad \quad + \dual{D_z\II(\hat{t}_\ind(s),\hat{z}_\ind(s)) 
 - D_z\II(\ubar{t}_\ind(s),\ovbar{z}_\ind(s))}{\hat{z}_\ind^\prime(s)}_{\ZZ^\ast,\ZZ} \, .
\end{align*} 
From \eqref{eq:opt.prop03}, we have in combination with the 1-homogeneity of $\RR_\delta$ that
\begin{align*}
 & - \frac{1}{\tau} \dual{D_z\II(\ubar{t}_\ind(s),\ovbar{z}_\ind(s))}{\zth{k}-\zth{k-1}}_{\ZZ^\ast,\ZZ} \\
 &\qquad = \frac{1}{\tau} \left(\RR_\delta(\zth{k}-\zth{k-1}) 
 + \tau \dist_{\VV^\ast}\{- D_z\Ih{k-1}{k},\partial \RR_\delta(0)\}\right) \\
 &\qquad = \RR_\delta(\hat{z}^\prime_\ind) + \dist_{\VV^\ast}\{- D_z\Ih{k-1}{k},\partial \RR_\delta(0)\}).
\end{align*}
By taking into account the definition of $r_\ind$ in \eqref{eq:defremainder}, 
integration over $(\sigma_1,\sigma_2)$ then yields \eqref{eq:eneq+rem}.\\
\noindent
It remains to estimate $r_\ind$. To this end, first observe that the definition of the affine and constant interpolants in 
\eqref{eq:affine-interpolants} and \eqref{eq:constinter} implies for every $k\in \{1, ..., N\}$ and every 
$s \in [s_{k-1}^\ind,s_k^\ind)$ that
\[ \hat{z}_\ind(s) - \ovbar{z}_\ind(s) = (s -s_{k}^\ind) \hat{z}_\ind^\prime(s) \, 
\text{ and } \, \hat{t}_\ind(s) - \ubar{t}_\ind(s) = (s -s_{k-1}^\ind) \hat{t}_\ind^\prime(s) , \]
which is frequently used in the following estimates.
Now, let $k \in \{1, ..., N\}$ and $s\in [s_{k-1}^\ind,s_k^\ind)$ be arbitrary. Then, since $(s-s_k^\ind )< 0$ the Garding-like inequality from \eqref{eq:garding-like} implies
\begin{equation}\label{eq:remainderest}
\begin{aligned}
 &\hspace*{-0.2cm}\dual{D_z\II(\hat{t}_\ind(s),\hat{z}_\ind(s)) - D_z\II(\ubar{t}_\ind(s),\ovbar{z}_\ind(s))}{\hat{z}_\ind^\prime(s)}_{\ZZ^*, \ZZ} \\
 &= \frac{1}{s-s_k^\ind} \dual{D_z\II(\hat{t}_\ind(s),\hat{z}_\ind(s)) - D_z\II(\hat{t}_\ind(s),\ovbar{z}_\ind(s))}{\hat{z}_\ind(s)-\ovbar{z}_\ind(s)}_{\ZZ^*, \ZZ} \\
 &\quad + \frac{1}{s-s_k^\ind} \dual{D_z\II(\hat{t}_\ind(s),\ovbar{z}_\ind(s)) - D_z\II(\ubar{t}_\ind(s),\ovbar{z}_\ind(s))}{\hat{z}_\ind(s)-\ovbar{z}_\ind(s)}_{\ZZ^*, \ZZ} \\
 &\leq \frac{1}{\abs{s-s_k^\ind}} \big( - \frac{\alpha}{2} \norm{\hat{z}_\ind(s)-\ovbar{z}_\ind(s)}_\ZZ^2 + C_1 \, \norm{\hat{z}_\ind(s)-\ovbar{z}_\ind(s)}_\VV^2 \big) \\
 &\quad + \frac{1}{\abs{s-s_k^\ind}} \big( C_2 \abs{\hat{t}_\ind(s)-\ubar{t}_\ind(s)} \norm{\hat{z}_\ind(s)-\ovbar{z}_\ind(s)}_\ZZ \big) \\
 &\leq \abs{s-s_k^\ind} \big( C_1 \, \norm{\hat{z}^\prime_\ind(s)}_\VV^2 \big) + C_2 \abs{\hat{t}_\ind(s)-\ubar{t}_\ind(s)} \norm{\hat{z}^\prime_\ind(s)}_\ZZ .
\end{aligned}
\end{equation}
Now, since $ \abs{s-s_k^\ind},\abs{\hat{t}_\ind(s)-\ubar{t}_\ind(s)} \leq \tau $ we obtain from the identity in \eqref{eq:defremainder} that  
\[  r_\ind(s) \leq \tau \, \big( C_1 \, \norm{\hat{z}^\prime_\ind(s)}_\VV^2 + C_2 \norm{\hat{z}^\prime_\ind(s)}_\ZZ ) \]
for almost all $ s \in [0,S_\ind]$. Thus \eqref{eq:remainder} easily follows from the bounds in Lemma~\ref{lem:unibound}. 
\end{proof}

\begin{Remark}
 A comparison of the discrete energy identity in \eqref{eq:eneq+rem} and the continuous one in \eqref{eq:energyiddef}
 shows that the coefficient $\norm{\hat{z}_\ind^\prime}$ is missing in front of the distance.
 It would be possible to reformulate the optimality conditions in Lemma~\ref{prop.optimalityprops} 
 in a way such that this coefficient would arise in \eqref{eq:eneq+rem}. 
 This, however, would complicate the passage to the limit in the next section. 
 As we will see at the end of the proof of Theorem~\ref{thm:main-conv}, \eqref{eq:eneq+rem} is 
 sufficient to obtain the desired energy identity in \eqref{eq:energyiddef}.
\end{Remark}

\subsection{Main Convergence Theorem}\label{sec:main}
Before we come to the main result, i.e., the passage to the limit in the discrete energy identity and therewith ultimately the existence of parametrized solutions, we need one last preparatory result, which guarantees the weak lower semicontinuity of the distance term in \eqref{eq:eneq+rem}.
\begin{Lemma}\label{lem:dist_lsc}
Let $ \xi_\delta \in \ZZ^\ast$ with $ \xi_\delta \weakly \xi $ in $\ZZ^\ast$ for $ \delta \to 0$. Suppose, moreover, that the distance is uniformly bounded, i.e., $ \dist_{\VV^\ast}\{-\xi_\delta,\partial \RR_\delta(0)\} \leq C$ with $C$ independent of $\delta$. Then the following weak lower semicontinuity result holds true:
\begin{equation}\label{eq:dist_lsc}
\liminf_{\delta \to 0} \dist_{\VV^\ast}\{-\xi_\delta,\partial\RR_\delta(0)\} \geq \dist_{\VV^\ast}\{-\xi,\partial\RR(0)\} .
\end{equation}
\end{Lemma}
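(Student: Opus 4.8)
The plan is to combine the characterization of the extended distance as an infimum of $\VV^\ast$-norms over the (weakly closed) set $\partial\RR_\delta(0)\subset\ZZ^\ast$ (see Lemma~\ref{lem:conv.ana} and Remark~\ref{rem:subdiffRR_weakly_closed}) with a weak compactness argument in $\VV^\ast$ and the recovery-sequence part of the Mosco convergence \eqref{ass:Gamma-Conv-R}. If $\liminf_{\delta\to0}\dist_{\VV^\ast}\{-\xi_\delta,\partial\RR_\delta(0)\}=+\infty$ there is nothing to prove, so I may pass to a (not relabelled) subsequence along which this $\liminf$ is attained as a finite limit $L$.

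For each $\delta$ I would choose a near-optimal element $w_\delta\in\partial\RR_\delta(0)$ with
\[
\norm{-\xi_\delta-w_\delta}_{\VV^\ast}\le\dist_{\VV^\ast}\{-\xi_\delta,\partial\RR_\delta(0)\}+\delta ,
\]
which in particular forces $-\xi_\delta-w_\delta\in\VV^\ast$. The assumed uniform bound $\dist_{\VV^\ast}\{-\xi_\delta,\partial\RR_\delta(0)\}\le C$ then makes $r_\delta:=-\xi_\delta-w_\delta$ a bounded sequence in $\VV^\ast$, so that, $\VV$ being reflexive, a subsequence satisfies $r_\delta\weakly r$ in $\VV^\ast$. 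Since the embedding $\ZZ\embeds\VV$ is dense, $\VV^\ast\embeds\ZZ^\ast$ continuously, hence $r_\delta\weakly r$ also in $\ZZ^\ast$, and together with $\xi_\delta\weakly\xi$ this yields $w_\delta=-\xi_\delta-r_\delta\weakly w$ in $\ZZ^\ast$, where $w:=-\xi-r$.

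It then remains to identify the limit as an admissible comparison element, i.e.\ to show $w\in\partial\RR(0)$. Fix $v\in\ZZ$ and pick a recovery sequence $v_\delta\to v$ in $\ZZ$ with $\limsup_{\delta\to0}\RR_\delta(v_\delta)\le\RR(v)$ from \eqref{ass:Gamma-Conv-R}. Because $w_\delta\in\partial\RR_\delta(0)$, Remark~\ref{rem:subdiffRR_weakly_closed} gives $\dual{w_\delta}{v_\delta}_{\ZZ^\ast,\ZZ}\le\RR_\delta(v_\delta)$, and passing to the limit (weak--strong convergence of the dual pairing on the left, $\limsup$ on the right) produces $\dual{w}{v}_{\ZZ^\ast,\ZZ}\le\RR(v)$; since $v\in\ZZ$ was arbitrary, $w\in\partial\RR(0)$. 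Finally, by the definition of the extended distance together with weak lower semicontinuity of $\norm{\cdot}_{\VV^\ast}$,
\[
\dist_{\VV^\ast}\{-\xi,\partial\RR(0)\}\le\norm{-\xi-w}_{\VV^\ast}=\norm{r}_{\VV^\ast}\le\liminf_{\delta\to0}\norm{r_\delta}_{\VV^\ast}\le\liminf_{\delta\to0}\dist_{\VV^\ast}\{-\xi_\delta,\partial\RR_\delta(0)\}=L ,
\]
which is \eqref{eq:dist_lsc}.

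The step I expect to be the main obstacle is that the near-minimizers $w_\delta$ give rise to a \emph{bounded} sequence $r_\delta$ in $\VV^\ast$ — this is exactly where the uniform-boundedness hypothesis enters — together with the bookkeeping of which space ($\VV^\ast$ versus $\ZZ^\ast$) each quantity lives and converges in; the transfer $w_\delta\in\partial\RR_\delta(0)\leadsto w\in\partial\RR(0)$ is where the $\limsup$-inequality of the Mosco convergence is essential. As an alternative route that avoids the compactness argument, one can instead use the conjugate characterization $\dist_{\VV^\ast}\{-\eta,\partial\RR_\delta(0)\}=\sup\{-\dual{\eta}{v}_{\ZZ^\ast,\ZZ}-\RR_\delta(v):v\in\ZZ,\ \norm{v}_\VV\le1\}$ from Lemma~\ref{lem:conv.ana} and test with a suitably rescaled recovery sequence for a near-optimal $v$ in the supremum defining $\dist_{\VV^\ast}\{-\xi,\partial\RR(0)\}$; the same weak--strong pairing limit and $\limsup$-inequality then give the claim directly.
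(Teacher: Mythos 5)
Your argument is essentially the paper's own proof: extract (near-)minimizers $w_\delta\in\partial\RR_\delta(0)$, use the uniform bound to get weak $\VV^\ast$-compactness of $-\xi_\delta-w_\delta$, transfer the limit into $\partial\RR(0)$ via the Mosco $\limsup$-recovery sequence, and conclude by weak lower semicontinuity of $\norm{\cdot}_{\VV^\ast}$. The only (cosmetic) difference is that the paper invokes the attainment of the infimum from Lemma~\ref{lem:conv.ana} and works with exact minimizers, while you use $\delta$-near-minimizers, which sidesteps the attainment claim at no cost.
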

\begin{proof}
First of all, we know that the minimum in the definition of the distance is attained, cf. Lemma~\ref{lem:conv.ana}, so that there exists $ \mu_\delta \in \partial\RR_\delta(0) \subset \ZZ^*$ with 
\begin{equation}
 \dist_{\VV^\ast}\{-\xi_\delta,\partial\RR_\delta(0)\} = \norm{\mu_\delta+\xi_\delta}_{\VV^*} .
\end{equation}
Therewith, we define $ \eta_\delta := \mu_\delta+\xi_\delta$ and infer $ \norm{\eta_\delta}_{\VV^\ast} \leq C$ by assumption. Hence, we may extract a weakly convergent subsequence $\eta_{\delta_n} \weakly \eta $ in $\VV^\ast$ for $ n \to \infty$. In particular, due to the lower semicontinuity of the norm $\norm{\cdot}_{\VV^*}$, it holds
\begin{equation}\label{eq:aux.liminf_eta}
\norm{\eta}_{\VV^*} \leq \liminf_{n \to \infty} \norm{\eta_{\delta_n}}_{\VV^*} = \liminf_{n \to \infty} \dist_{\VV^\ast}\{-\xi_{\delta_n},\partial \RR_{\delta_n}(0)\} .
\end{equation}   
We proceed with showing that $ \eta = \mu+\xi$ for some $ \mu \in \partial\RR(0)$. To this end, we first note that by $\VV^\ast \subset \ZZ^\ast$ and the weak convergence of $\xi_{\delta_n}$ it holds $ \mu_{\delta_n} = \xi_{\delta_n} - \eta_{\delta_n} \weakly \xi - \eta$ in $\ZZ^\ast$ and we define $ \mu = \xi - \eta$.
Now, $\mu_{\delta_n} \in \partial \RR_{\delta_n}(0)$ is equivalent to
\begin{equation*}
 \RR_{\delta_n}(z) \geq \dual{\mu_{\delta_n}}{z}_{\ZZ^\ast,\ZZ} \quad \forall\, z \in \ZZ
\end{equation*}
and this inequality remains in the limit $ n \to \infty$. Indeed, given $ z \in \ZZ$, by assumption \eqref{ass:Gamma-Conv-R} there exists a sequence $z_n \in \ZZ$ converging to $z$ with $\RR(z) \geq \limsup_{n \to \infty} \RR_{\delta_n}(z_n)$. The strong convergence of $z_n$ also implies that the dual pairing $\dual{\mu_{\delta_n}}{z}_{\ZZ^\ast,\ZZ}$ converges so that 
\begin{equation}
\RR(z) \geq \limsup_{n \to \infty} \RR_{\delta_n}(z_n) \geq \limsup_{n \to \infty} \dual{\mu_{\delta_n}}{z_n}_{\ZZ^\ast,\ZZ} = \dual{\mu}{z}_{\ZZ^*,\ZZ}.
\end{equation}
Since $z \in \ZZ$ was arbitrary, we find $\mu \in \partial\RR(0)$. Hence, we conclude from \eqref{eq:aux.liminf_eta} that
\begin{align*}
\dist_{\VV^\ast}\{-\xi,\partial\RR(0)\} \leq \norm{\mu+\xi}_{\VV^*} = \norm{\eta}_{\VV^*} \leq \liminf_{n \to \infty} \dist_{\VV^\ast}\{-\xi_{\delta_n},\partial \RR_{\delta_n}(0)\} . 
\end{align*}
Since this holds for all subsequence of $ \eta_\delta$, we ultimately arrive at the desired lower semicontinuity in \eqref{eq:dist_lsc}.
\end{proof}

\begin{Example}
Note that it is indeed possible that $ \xi \in \ZZ^* \setminus \VV^*$ while $ \dist_{\VV^\ast}\{-\xi,\partial\RR(0)\} < \infty$. To see this, let us take $ \ZZ = H^1(0,1)$, $ \VV = L^2(0,1)$ and $ \RR(v) = \norm{v}_{L^1(0,1)} + I_K(v)$ where $ K = \{ v \in H^1(0,1) : v \geq 0 \text{ a.e. in } (0,1) \}$. Moreover, we let $ \xi = \delta_{1/2} \in H^{-1}(0,1)$ with $\delta_{1/2}$ the delta distribution in $ x = \tfrac{1}{2}$, i.e., $ \dual{\xi}{z}_{\ZZ^*,\ZZ} = z(\tfrac{1}{2})$. Due to the construction, we have $ \RR(v) \geq 0 \geq -v(\frac{1}{2}) = \dual{-\xi}{v}_{\ZZ^*,\ZZ}$ for all $ v \in K$ and since $\RR(v) = \infty$ for $ v \not\in K$ also $\RR(v) \geq \dual{-\xi}{v}_{\ZZ^*,\ZZ}$ for all $ v \in \ZZ$. Therefore, by the characterization of $\partial\RR(0)$ in Lemma~\ref{lem:char.subdiff}, it holds $ -\xi \in \partial\RR(0)$ and consequently $ \dist_{\VV^\ast}\{-\xi,\partial\RR(0)\} = 0 $ although $ \xi \not\in \VV^*$. Clearly, this property is related to the unboundedness of $\RR$, i.e., $\RR$ does not fulfill the upper bound $\RR(v) \leq C \norm{v}_\VV$, which is a frequently used assumption in the context of parametrized solutions.
\end{Example}

We now have everything at hand to prove our main convergence result.

\begin{theorem}[Convergence towards parametrized solutions]\label{thm:main-conv}
Assume that $ z_0^\delta $ converges to the initial state $z_0$ for $\delta \to 0$. Then there exists a sequence of parameters $\{\tau_n,\delta_n\}_{n\in \N}\subset \R_+ \times \R_+$ converging to zero so that the affine interpolants generated 
by the fully discrete local stationarity scheme (\nameref{alg:locmin}) and the artificial end time defined in \eqref{eq:Sdef} satisfy
\begin{alignat}{3}
 S_{\tau_n,\delta_n} &\to S, \label{eq:conv.S} \\
 \hat{t}_{\tau_n,\delta_n} &\overset{\ast}{\weakly} \hat{t} & \quad & \text{ in }\; W^{1,\infty}(0,S;\R), \label{eq:conv.t} \\
 \hat{z}_{\tau_n,\delta_n} &\overset{\ast}{\weakly} \hat{z} & \quad & \text{ in }\; W^{1,\infty}(0,S;\VV) \cap H^1(0,S;\ZZ), 
 \label{eq:conv.z} \\
 \hat{z}_{\tau_n,\delta_n}(s) &\weakly \hat{z}(s) & \quad & \text{ in }\; \ZZ \text{ for every } s \in [0,S], \label{eq:conv.z_ptw}
\end{alignat}
and the limit $(\hat t, \hat z)$ is a parametrized solution in the sense of Definition~\ref{def:paramsol}.

Moreover, every accumulation point $(\hat{t},\hat{z})$ of sequences in the sense of 
\eqref{eq:conv.S}--\eqref{eq:conv.z_ptw} is a parametrized solution.
\end{theorem}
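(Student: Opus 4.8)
The plan is to extract a subsequence of the discrete interpolants along which everything converges, to pass to the limit in the discrete energy equality of Lemma~\ref{prop:discrete-eneq}, and to close the argument with a ``lower'' energy estimate coming from a pointwise chain rule; the complementarity relations will then drop out along the way. \emph{Compactness.} By \eqref{eq:Sdef} we have $0<S_\ind\le C_S$ uniformly, so a first subsequence gives $S_{\tau_n,\delta_n}\to S$. The uniform bounds of Lemma~\ref{lem:unibound} on $[0,\tilde S]$ and Banach--Alaoglu produce a further (not relabelled) subsequence and limits $\hat t,\hat z$ with $\hat t_\ind\overset{\ast}{\weakly}\hat t$ in $W^{1,\infty}(0,\tilde S)$ and $\hat z_\ind\overset{\ast}{\weakly}\hat z$ in $W^{1,\infty}(0,\tilde S;\VV)\cap H^1(0,\tilde S;\ZZ)$. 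Since $\ZZ\embeds^{c,d}\VV$, the $\hat z_\ind$ are equi-Lipschitz into $\VV$ with values in a fixed bounded, hence relatively $\VV$-compact, subset of $\ZZ$, so Arzel\`a--Ascoli gives $\hat t_\ind\to\hat t$ in $C([0,\tilde S])$, $\hat z_\ind\to\hat z$ in $C([0,\tilde S];\VV)$, and — using the uniform $\ZZ$-bound of $\hat z_\ind(s)$ and reflexivity — $\hat z_\ind(s)\weakly\hat z(s)$ in $\ZZ$ for every $s$. Restricting to $[0,S]$ yields \eqref{eq:conv.S}--\eqref{eq:conv.z_ptw} and the regularity in Definition~\ref{def:paramsol}(i). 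Because $|\ovbar t_\ind-\hat t_\ind|,|\ubar t_\ind-\hat t_\ind|\le\tau$ and, by $\|\hat z'_\ind\|_\VV\le1$, $\ovbar z_\ind-\hat z_\ind,\ubar z_\ind-\hat z_\ind\to0$ in $C([0,\tilde S];\VV)$, we also get $\ovbar t_\ind(s),\ubar t_\ind(s)\to\hat t(s)$ and $\ovbar z_\ind(s),\ubar z_\ind(s)\weakly\hat z(s)$ in $\ZZ$ for every $s$; by \eqref{ass:I5} this gives $D_z\II(\ubar t_\ind(s),\ovbar z_\ind(s))\weakly D_z\II(\hat t(s),\hat z(s))$ in $\ZZ^\ast$, and by \ref{ass:E4}, $\partial_t\II(\hat t_\ind(s),\hat z_\ind(s))\to\partial_t\II(\hat t(s),\hat z(s))$, for every $s$.

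\emph{Conditions (ii) and \eqref{eq:degenerate}.} From $\hat t_\ind(0)=0$, $\hat t_\ind(S_\ind)=T$ and uniform convergence we obtain $\hat t(0)=0$, $\hat t(S)=T$; from $z_0^\delta\to z_0$ in $\ZZ$ and $\hat z_\ind(0)\to\hat z(0)$ in $\VV$ we get $\hat z(0)=z_0$; and $\hat t'\le1$ with $\hat t(0)=0$ forces $S\ge\hat t(S)=T$, so Definition~\ref{def:paramsol}(ii) holds. Passing to the weak-$\ast$ limit in $\hat t'_\ind\ge0$ preserves the sign, and testing the identity $\hat t'_\ind+\|\hat z'_\ind\|_\VV=1$ of Lemma~\ref{lem.prop-affine-interpol} against nonnegative $L^1$-functions (using $\hat t'_\ind\overset{\ast}{\weakly}\hat t'$ and weak lower semicontinuity of the convex functional $v\mapsto\int\|v\|_\VV$) yields $\hat t'\ge0$ and $\hat t'+\|\hat z'\|_\VV\le1$ a.e., i.e.\ \eqref{eq:degenerate}.

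\emph{Upper energy inequality.} Fix $s\in[0,S]$; for $n$ large the discrete energy equality of Lemma~\ref{prop:discrete-eneq} holds at $s$ (for $s=S$ take $s_m\nearrow S$ and use continuity of the integrals). On the left, \eqref{eq:lower_semicont_combined} gives $\liminf\II(\hat t_\ind(s),\hat z_\ind(s))\ge\II(\hat t(s),\hat z(s))$; the Mosco-convergence \eqref{ass:Gamma-Conv-R} together with a lower semicontinuity result for integral functionals applied to $\hat z'_\ind\weakly\hat z'$ in $L^2(0,s;\ZZ)$ gives $\liminf\int_0^s\RR_\delta(\hat z'_\ind)\dd\sigma\ge\int_0^s\RR(\hat z')\dd\sigma$; and since the distances are uniformly bounded by \eqref{eq:eq.est_dist} and $D_z\II(\ubar t_\ind(\sigma),\ovbar z_\ind(\sigma))\weakly D_z\II(\hat t(\sigma),\hat z(\sigma))$, Lemma~\ref{lem:dist_lsc} combined with Fatou's lemma gives the corresponding $\liminf$ inequality for the distance integral. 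On the right, $\II(0,z_0^\delta)\to\II(0,z_0)$ by \ref{ass:E1}; $\int_0^s\partial_t\II(\hat t_\ind,\hat z_\ind)\hat t'_\ind\dd\sigma\to\int_0^s\partial_t\II(\hat t,\hat z)\hat t'\dd\sigma$ by the pointwise convergence of $\partial_t\II(\hat t_\ind,\hat z_\ind)$, the bound \eqref{eq:gronwall2}, and $\hat t'_\ind\overset{\ast}{\weakly}\hat t'$; and $\int_0^s r_\ind\dd\sigma\le C\tau\to0$ by \eqref{eq:remainder}. As Lemma~\ref{prop:discrete-eneq} is an equality, this yields, for every $s\in[0,S]$,
\[ \II(\hat t(s),\hat z(s))+\int_0^s\!\big(\RR(\hat z'(\sigma))+\dist_{\VV^\ast}\{-D_z\II(\hat t(\sigma),\hat z(\sigma)),\partial\RR(0)\}\big)\dd\sigma\le\II(0,z_0)+\int_0^s\!\partial_t\II(\hat t(\sigma),\hat z(\sigma))\hat t'(\sigma)\dd\sigma, \]
in particular the distance is finite for a.e.\ $\sigma$.

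\emph{Lower energy inequality, \eqref{eq:slack} and conclusion.} For a.e.\ $s$ the minimum defining $\dist_{\VV^\ast}\{-D_z\II(\hat t(s),\hat z(s)),\partial\RR(0)\}$ is attained at some $w(s)\in\partial\RR(0)\subset\ZZ^\ast$ with $-D_z\II(\hat t(s),\hat z(s))-w(s)\in\VV^\ast$ (Lemma~\ref{lem:conv.ana}); using $\dual{w(s)}{v}_{\ZZ^\ast,\ZZ}\le\RR(v)$ for all $v\in\ZZ$ and $\hat z'(s)\in\ZZ$, a splitting of the pairing gives $\dual{D_z\II(\hat t(s),\hat z(s))}{\hat z'(s)}_{\ZZ^\ast,\ZZ}\ge-\RR(\hat z'(s))-\|\hat z'(s)\|_\VV\,\dist_{\VV^\ast}\{-D_z\II(\hat t(s),\hat z(s)),\partial\RR(0)\}$. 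Since $\II\in C^1$ and $(\hat t,\hat z)\in W^{1,\infty}(0,S)\times(H^1(0,S;\ZZ)\cap W^{1,\infty}(0,S;\VV))$ is absolutely continuous, the chain rule applies to $s\mapsto\II(\hat t(s),\hat z(s))$; integrating the pointwise bound yields the ``$\ge$'' in \eqref{eq:energyiddef}. Since $\|\hat z'\|_\VV\le1$ by \eqref{eq:degenerate}, the upper inequality above implies the ``$\le$'' in \eqref{eq:energyiddef}, so \eqref{eq:energyiddef} holds with equality; comparing it once more with the upper inequality forces $\int_0^s(1-\|\hat z'\|_\VV)\,\dist_{\VV^\ast}\{-D_z\II(\hat t,\hat z),\partial\RR(0)\}\dd\sigma=0$, and together with $0\le\hat t'\le1-\|\hat z'\|_\VV$ this gives $\hat t'(s)\,\dist_{\VV^\ast}\{-D_z\II(\hat t(s),\hat z(s)),\partial\RR(0)\}=0$ a.e., i.e.\ \eqref{eq:slack}. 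Hence $(\hat t,\hat z)$ is a parametrized solution; since the argument used only the convergences \eqref{eq:conv.S}--\eqref{eq:conv.z_ptw} and not the particular extraction, every accumulation point in that sense is a parametrized solution. I expect the main obstacle to be the limit passage in the energy equality — reconciling the discrete identity, whose distance term lacks the factor $\|\hat z'_\ind\|_\VV$ present in \eqref{eq:energyiddef}, with the continuous one, for which the sandwich between the upper inequality and the chain-rule lower bound is essential — and, within it, carrying the distance term to the limit via Lemma~\ref{lem:dist_lsc}.
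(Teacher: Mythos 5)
Your proof is correct and follows the paper's skeleton for the compactness extraction, the initial/end time conditions, the weak closedness argument for \eqref{eq:degenerate}, and the upper energy inequality (including the use of Lemma~\ref{lem:dist_lsc} with Fatou for the distance term). The genuine divergence is in how you obtain \eqref{eq:slack} and \eqref{eq:energyiddef}. The paper first derives \eqref{eq:slack} \emph{directly} from the discrete complementarity \eqref{eq:comp.cond}, via a truncation $\xi_{n,\omega}:=\min\{\xi_n,\xi,\omega\}$, Lebesgue dominated convergence, the weak-$\ast$ convergence of $\hat t_n'$, and Fatou's lemma in $\omega$; it then establishes the energy \emph{inequality} \eqref{eq:energy_ineq_main_conv} (with the factor $\|\hat z'\|_\VV$ inserted via $\|\hat z'\|_\VV\le1$) and outsources the upgrade to an identity to a cited lemma ([Lem.~6.6, krz13]), whose content is precisely the chain-rule lower bound you write out. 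You instead keep the sharper upper inequality (the one \emph{without} the factor $\|\hat z'\|_\VV$), spell out the chain rule and the splitting $D_z\II=-w-\eta$ to obtain the lower bound with the factor, and then read off both the identity and $\int_0^s(1-\|\hat z'\|_\VV)\,\dist\,\dd\sigma=0$ from the sandwich, deducing \eqref{eq:slack} from $0\le\hat t'\le1-\|\hat z'\|_\VV$. This is a clean self-contained alternative: it eliminates the separate truncation/DCT/Fatou argument for \eqref{eq:slack} at the cost of making explicit the chain-rule step that the paper handles by citation. One point you slide over quickly is the justification of the chain rule for $s\mapsto\II(\hat t(s),\hat z(s))$ when $\hat z\in H^1(0,S;\ZZ)$; the paper's own remark that $H^1(0,S;\ZZ)\embeds C([0,S];\ZZ)$ together with continuity of $D_z\II$ yields boundedness of $D_z\II(\hat t(\cdot),\hat z(\cdot))$ along the path is what makes the Bochner-valued chain rule (and the integrability of the pointwise derivative) legitimate, and is worth stating when the citation is dropped.
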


\begin{proof}
The arguments are analog to the ones in \cite{fem_paramsol,diss_sievers}. For convenience of the reader, we briefly repeat the main steps.  

\noindent
The existence of a (sub-)sequence satisfying \eqref{eq:conv.S}--\eqref{eq:conv.z} 
is an immediate consequence of the uniform estimates in Lemma~\ref{lem.basicest1}, Lemma~\ref{lem:unibound}, 
and \eqref{eq:Sdef}. The pointwise convergence in \eqref{eq:conv.z_ptw} follows from the Aubin-Lions lemma, i.e., $W^{1,\infty}(0,S;\VV) \cap L^\infty(0,S;\ZZ) \embeds^c C(0,S;\VV)$, the density of $\ZZ $ in $\VV$ and the fact that for every $s\in [0,S]$, $\{\hat{z}_{\tau_n,\delta_n}(s)\}_{n\in\N}$ is bounded in $\ZZ$ by Lemma~\ref{lem.basicest1}.

\noindent
It remains to show that every (weak) limit is a parametrized solution. 
For this purpose, let $\{\tau_n,\delta_n\}$ be an arbitrary null sequence and assume that the convergences in  
\eqref{eq:conv.S}--\eqref{eq:conv.z_ptw} hold. In order to simplify the notation, we indicate by $\{\cdot\}_n$ 
the sequence of $\{\cdot\}_\ind$ corresponding to $\{\tau_n,\delta_n\}$. 
Analogously, we abbreviate the index $\delta_n$ simply by $n$.
We proceed in several steps and start with the following: \vspace*{0.1cm}

\indent \textbf{Convergence of piecewise constant interpolants.}
One easiy verifies using the estimate 
\begin{equation*}
 \|\hat z_n(s) - \ovbar{z}_n(s)\|_\VV = |s - s_k^n|\,\|\hat z_n'(s)\|_\VV \leq \tau \to 0, 
\end{equation*}
which holds for all $k\in \{1, ..., N\}$ and all $s\in [s_{k-1}^n, s_k^n)$ that the piecewise constant interpolants converge 
pointwise to the same limit as the affine interpolants. We therefore have 
\begin{equation}\label{eq:convconst}
 \ubar{t}_n(s), \ovbar{t}_n(s) \to \hat{t}(s), \qquad \ubar{z}_n(s), \ovbar{z}_n(s) \weakly \hat{z}(s) \;\text{ in } \ZZ 
 \quad \forall\, s\in [0,S].
\end{equation}

\indent \textbf{Initial and end time conditions.}
By assumption we have $ \hat{z}_n(0) = z_0^\delta \to z_0 $ in $\ZZ$,
so that the pointwise convergence in \eqref{eq:conv.z_ptw} implies $\hat z(0) = z_0$ as desired. 
Moreover, thanks to \eqref{eq:conv.t}, $\hat t_n$ converges uniformly to $\hat t$ so that 
\begin{equation*}
 0 = \hat t_n(0) \to \hat t(0) 
 \quad \text{and} \quad 
 T = \hat t_n(S_n) \to \hat t(S), 
\end{equation*}
where we also used \eqref{eq:conv.S}.

\indent \textbf{Complementarity relations.}
We continue with the complementarity-like relations in \eqref{eq:compl}. 
First, the set 
\begin{equation*}
 \{ (\tau, \zeta) \in L^2(0,S) \times L^2(0,S;\VV) : \tau(s) \geq 0, \; 
 \tau(s) + \|\zeta(s)\|_\VV \leq 1 \text{ f.a.a.\ } s\in (0,S)\}
\end{equation*}
is clearly convex and closed, thus weakly closed and consequently, 
we obtain that the weak limit $(\hat t, \hat z)$ satisfies the inequalities in \eqref{eq:degenerate}. Next, we turn to \eqref{eq:slack}, whose derivation is by far more involved.
On account of the weak continuity assumptions for $D_z\II$, it follows 
from \eqref{eq:convconst} that 
\[ D_z\II(\ubar{t}_n(s),\ovbar{z}_n(s)) \weakly D_z \II(\hat{t}(s),\hat{z}(s)) \;\text{ in } \ZZ^\ast
\quad \forall\, s\in [0,S]. \]
Combining this with the uniform boundedness of the distance from \eqref{eq:eq.est_dist} allow us to apply Lemma~\ref{lem:dist_lsc}, which gives
\begin{equation}\label{eq:liminf1}
\begin{aligned}
 \liminf_{n \to \infty} \dist_{\VV^\ast}\{- D_z\II(\ubar{t}_n(s),\ovbar{z}_n(s)),\partial \RR_n(0)\} 
 \qquad\qquad &\\
 \geq  \dist_{\VV^\ast}\{-D_z\II(\hat{t}(s),\hat{z}(s)),\partial \RR(0)\}. &
\end{aligned}
\end{equation}
To show \eqref{eq:slack}, let us abbreviate
\begin{equation*}
\begin{aligned}
 \xi_n(s) &:= \dist_{\VV^\ast}\{- D_z\II(\ubar{t}_n(s),\ovbar{z}_n(s)),\partial \RR_n(0)\},\\
 \xi(s) &:= \dist_{\VV^\ast}\{-D_z\II(\hat{t}(s),\hat{z}(s)),\partial \RR(0)\},
\end{aligned}
\end{equation*}
so that \eqref{eq:liminf1} reads
\begin{equation}\label{eq:liminf2}
 \liminf_{n \to \infty} \xi_n(s) \geq \xi(s) \geq 0 \quad \forall\, s\in [0,S].
\end{equation}
Concerning the measurability of $\xi$ we note that by the embedding $H^1(0,S;\ZZ) \embeds C(0,T;\ZZ)$ and the continuity of $D_z\II$ the mapping $ s \mapsto -D_z\II(\hat{t}(s),\hat{z}(s))$ is continuous. Exploiting Lemma~\ref{lem:dist_lsc}, we can conclude that $\xi$ is lower semicontinuous and therefore, indeed, measurable.

\noindent
Now, consider an arbitrary $\omega \geq 0$ and define $\xi_{n,\omega}(s) := \min\{\xi_n(s),\xi(s),\omega\}$ 
such that, thanks to \eqref{eq:liminf2}, $\xi_{n,\omega}(s)$ converges to $\xi_\omega(s) := \min\{\xi(s),\omega\}$ almost everywhere in $(0,S)$.
Since $\xi_\omega$ is measurable (as $\xi$ is so) and $\omega \geq \xi_{n,\omega}(s)$, Lebesgue's dominated convergence theorem gives $ \xi_{n,\omega} \to \xi_{\omega} $ in $ L^1(0,S)$. Thus, thanks to $\xi_n(s) \geq \xi_{n,\omega}(s)$ and the weak$^*$ convergence of $\hat t'$, we obtain from \eqref{eq:comp.cond} that
\begin{align*}
 0 =  \liminf_{n \to \infty} \int_0^S \hat{t}^\prime_n(s) \, \xi_n(s)\dd s 
 \geq \liminf_{n \to \infty}  \int_0^S \hat{t}^\prime_n(s) \, \xi_{n,\omega}(s) \dd s
 = \int_0^S \hat{t}^\prime(s) \, \xi_\omega(s) \dd s  .
\end{align*}
Since $\omega\geq 0$ was arbitrary, this inequality holds for every $\omega$ so that Fatou's lemma yields
\begin{align*}
0 \geq \liminf_{\omega \to \infty} \int_0^S \hat{t}^\prime(s)\, \xi_\omega(s) \dd s 
\geq \int_0^S \hat{t}^\prime(s) \, \xi(s) \dd s \geq 0.
\end{align*}
Because of $\xi \geq 0$ and $\hat t' \geq 0$ a.e.\ in $(0,S)$, cf.~\eqref{eq:degenerate}, this gives \eqref{eq:slack}.

\indent \textbf{Energy identity.}
The energy identity is a direct consequence of its discrete version in Lemma~\ref{prop:discrete-eneq}. Indeed, we find
\begin{align*}
 & \II(\hat{t}(s),\hat{z}(s)) + \int_0^{s} \RR(\hat{z}^\prime(\sigma)) 
 + \norm{\hat{z}^\prime(\sigma)}_\VV \dist_{\VV^\ast}\{-D_z\II(\hat{t}(\sigma),\hat{z}(\sigma)),\partial \RR(0)\} \dd \sigma \\
 &\leq \liminf_{n \to \infty} \bigg( \II(\hat{t}_n(s),\hat{z}_n(s)) \\
 &\qquad  + \int_0^{s} \RR_n(\hat{z}_n^\prime(\sigma)) 
 + \dist_{\VV^\ast}\{- D_z\II(\hat{t}_n(\sigma),\hat{z}_n(\sigma)),\partial \RR_n(0)\} \dd \sigma \bigg) \\
 & = \liminf_{n \to \infty} \left( \II(\hat{t}_n(0),\hat{z}_n(0)) 
 + \int_0^{s} \partial_t \II(\hat{t}_n(\sigma),\hat{z}_n(\sigma)) \, \hat{t}^\prime_n(\sigma) \dd \sigma + \int_0^{s} r_n(\sigma) \dd \sigma \right).
\end{align*}
by the weak lower semicontinuity of $\II(t, \cdot)$ from \eqref{eq:lower_semicont_combined} and \cite[Cor. 4.5]{stefanelli2008brezis} which gives 
\[ \int_0^s \RR(\hat{z}^\prime(\sigma)) \dd \sigma \leq \liminf_{\delta \to 0} \int_0^s \RR_\delta(\hat{z}^\prime(\sigma)) \dd \sigma \]
due to the assumptions on the space $\VV$ and condition \eqref{ass:Gamma-Conv-R} on $\RR_\delta$. Moreover, we used $\|\hat z'(s)\|_\VV \leq 1$ as well as Fatou's lemma together with \eqref{eq:liminf1} for the distance term. Now, exploiting the estimate in \eqref{eq:remainder}, assumption \ref{ass:E4} for $\partial_t \II$ and the strong convergence of $\hat{z}_n(0)$ to $z_0$ in $\ZZ$ we finally end up with
\begin{multline}\label{eq:energy_ineq_main_conv}
\II(\hat{t}(s),\hat{z}(s)) + \int_0^{s} \RR(\hat{z}^\prime(\sigma)) 
 + \norm{\hat{z}^\prime(\sigma)}_\V \dist_{\VV^\ast}\{-D_z\II(\hat{t}(\sigma),\hat{z}(\sigma)),\partial \RR(0)\} \dd \sigma \\
\leq \II(0,z_0) + \int_0^{s} \partial_t \II(\hat{t}(\sigma),\hat{z}(\sigma)) \, \hat{t}^\prime(\sigma) \dd \sigma,
\end{multline}
which is the desired energy inequality. Taking into account that $ \hat{z} \in H^1(0,S;\ZZ)$, it is well-known that the sole inequality \eqref{eq:energy_ineq_main_conv} is already equivalent to the energy identity \eqref{eq:energyiddef}, see \citep[Lem. 6.6]{krz13} or \citep[Lem. 2.4.6]{diss_sievers}, which completes the proof.
\end{proof}
Unfortunately, we do not obtain the nondegeneracy let alone normalization of the limit $(\hat{t},\hat{z})$ here. The main problem is the fact that the weak convergence of $\hat{z}_n$ in $H^1(0,S;\ZZ)$ from \eqref{eq:conv.z} is not sufficient in order to pass to the limit in \eqref{eq:affine.aux1}, that is, $\hat{t}_n^\prime(s) + \norm{\hat{z}_n^\prime(s)}_\VV = 1$, and still obtain equality in the end. In \cite{mielkezelik,efenmielke06}, the authors therefore provide sufficient conditions, which guarantee the nondegeneracy of the limit function. Moreover, in \cite{efenmielke06}, a condition is given, which also preserve the normalization. Nevertheless it is always possible to reparameterize a parametrized solution and in order to normalize it, see \citep[Lerm. A.4.3]{diss_sievers}. Regardless of this fact, we note that the above Theorem, while dedicated to the convergence analysis of the fully discrete local stationarity scheme, also provides an existence result for parametrized solutions in case of an unbounded dissipation $\RR$ (choose $ \RR_\delta = \RR$).

\section{Application to a Damage Model}\label{sec:num}
We now aim at applying the local stationarity scheme to model the evolution of damage within a workpiece during a time interval $[0,T]$. For this, we let $\Omega \subset \R^2$ be a bounded domain that corresponds to an elastic body and satisfies
\begin{center}
$ \Omega \subset \R^2 $ has a Lipschitz boundary $ \partial \Omega = \overline{\Gamma}_D \cup \overline{\Gamma}_N$ with Dirichlet boundary $ \Gamma_D $ \\
such that $\HH^1(\Gamma_D) > 0$ and Neumann boundary $\Gamma_N$. Moreover, $ \Gamma_D$ and $\Gamma_N$ are \\
supposed to be regular in the sense of Gröger, see \cite{groger89}. 
\end{center} 
Note that, although we focus on the twodimensional case here, it is also possible to consider the threedimensional case aswell provided the spaces and the energy are adapted appropriately; compare with the elaborations in \cite{livia2019part2,krz13} which we also follow with regard to notation. During the time $[0,T]$, time dependent boundary conditions $u_D$ as well as external boundary and volume forces $\ell$ may be applied, which lead to a certain displacement $u$ and possibly even to a damage, represented by the variable $z$, of the body. Usually, $z$ is supposed to take values in $[0,1]$ whereby $ z(t,x)=0$ means the body is completely sound and, correspondingly, $ z(t,x)=1$ means the body is comletely damaged. With a view to the energy functional, we define \[ \UU = \{ v \in H^1(\Omega,\R^2) \, : \, v|_{\Gamma_D} = 0 \} , \quad \ZZ=H^1(\Omega), \quad \VV = L^2(\Omega) \]
and let 
\[ \ell \in C^{1,1}(0,T;W^{-1,p}_{\Gamma_D}(\Omega)), \quad u_D \in C^{1,1}(0,T;W^{1,p}(\Omega)) \]
where $ p > 2 $ is chosen as in Lemma~\ref{lem:uniquely_solvable} below. Note that we will use a scaled version of the $L^2$-norm, that is $\norm{\cdot}_\VV := \tfrac{1}{\abs{\Omega}}\norm{\cdot}_{L^2}$. This choice has been shown to be advantageous in the numerical experiments particularly with a view to iteration numbers. Now, we set $\EE : [0,T] \times \UU \times \ZZ \to \R$ as
\begin{align*} 
\EE(t,u,z) &= \frac{1}{2}\int_\Omega \abs{\nabla z}^2 \dd x + \int_\Omega f(z) \dd x \\ 
&\qquad + \frac{1}{2} \int_\Omega g(z) \, \C \varepsilon(u+u_D(t)):\varepsilon(u+u_D(t)) \dd x - \dual{\ell(t)}{u}_\UU \\
&= \II_1(z) + \EE_2(t,u,z). 
\end{align*}
where $ \C $ is the usual elasticity tensor with
\begin{subequations}\label{eq:elasticity_tensor}
\begin{align} 
&\C \in L^\infty(\Omega; \LL(\R_{sym}^{d \times d}, \R_{sym}^{d \times d}) \\
&\exists \gamma_0 > 0  \text{ such that for all } \xi \in \R_{sym}^{d \times d} \text{ and for almost all } x \in \Omega: \quad    \C(x) \xi : \xi \geq \gamma_0 \norm{\xi}^2 
\end{align}
\end{subequations}
and $ \varepsilon(u) = \tfrac{1}{2}(\nabla u + \nabla u^\top)$ is the linearized strain tensor. The nonlinearity $f$ in the energy is chosen such that the term is continuously differentiable in $\ZZ$. Since we will neglect this term in our numerical examples, we do not particularize the exact assumptions and refer to \cite{krz13}. However, a typical choice for $f$ in the context of Ambrosio-Tortorelli approximation of brittle fracture is $f(z) = (1-z)^2$, see \cite{giacomini05,almi2019consistent}, which is clearly sufficiently smooth. Furthermore, the function $g$, which somehow represents the preservation of the elasticity of the material depending on the state of damage, is supposed to fulfill:
\begin{equation}\label{eq:func_g}
g \in C^2(\R), \text{ with } g^\prime,g^{\prime \prime} \in L^\infty(\R), \text{ and } \exists \gamma_1,\gamma_2 >0 \, : \, \forall z \in \R \, : \, \gamma_1 \leq g(z) \leq \gamma_2. 
\end{equation}
In particular, the lower bound $ g \geq \gamma_1 >0$ is to be noted here. It implies that even if the material is completely damaged, it does not lose all its rigidity. This is often referred to as \emph{partial damage model}. Finally, the dissipation $\RR: L^1(\Omega) \to [0,\infty]$ is given by
\begin{equation}\label{eq:diss_damage} \RR(v) = \begin{cases}
\kappa \int_\Omega v(x) \dd x, & \text{if } v \geq 0 \, \text{ a.e. in } \Omega, \\
+ \infty, & \text{else}, 
\end{cases} \end{equation}
with the so-called fracture toughness $\kappa >0$.
Now, in order to bring this model into the setting of Section~\ref{sec:assumptions}, it is convenient to reduce the system to the damage variable $z$. This means that we require the displacement $u(t)$ to minimize the energy $\EE(t,\cdot,z(t))$ at every time point $ t \in [0,T]$, i.e., 
\begin{equation}\label{eq:damage_min_u} 
u(t) \in \argmin \{ \EE(t,v,z) \, : \, v \in \UU \} . 
\end{equation}
It is, in fact, easy to see that this problem has a unique minimizer for every $ t \in [0,T]$ and $ z \in \ZZ$. Hence, we define $ \II_2 : [0,T] \times \ZZ \to \R$ by $ \II_2(t,z) = \inf_{v \in \UU} \EE_2(t,v,z)$ and let 
\begin{equation}\label{eq:energy_damage} \II(t,z) = \II_1(z) + \II_2(t,z). \end{equation} 

\subsection{Properties of the energy functional}\label{sec:energy_props}
We now want to verify that the model from above fits into the setting of Section~\ref{sec:assumptions}. Thereby, we rely on the results from \cite{livia2019part2,krz13}. We start with the following observation, which was first proven in \cite{hmw11} and states that the minimization with respect to $u$ is well-defined and provides a unique solution. 
\begin{Lemma}\label{lem:uniquely_solvable}
Under the assumptions \eqref{eq:elasticity_tensor} and \eqref{eq:func_g} there exists $ p > 2 $ such that for any $ \tilde{p} \in [0,p]$ and for every $ z \in \ZZ$ the linear elliptic operator 
\[ \dual{L_z(v)}{w} = \frac{1}{2} \int_\Omega g(z) \, \C \varepsilon(v):\varepsilon(w) \dd x \quad \forall v,w \in \UU \]
is an isomorphism $ L_z : W^{1,\tilde{p}}_{\Gamma_D}(\Omega;\R^d) \to W^{-1,\tilde{p}}_{\Gamma_D}(\Omega;\R^d)$.
\end{Lemma}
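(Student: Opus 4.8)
The plan is to treat this as a standard $W^{1,p}$-regularity statement for a linear elliptic system with mixed boundary conditions and to reduce it, by a uniform perturbation argument, to Gröger's regularity theory. First I would settle the Hilbert-space case $\tilde p = 2$: the bilinear form $a_z(v,w) := \tfrac12\int_\Omega g(z)\,\C\,\varepsilon(v):\varepsilon(w)\,\dd x$ associated with $L_z$ is bounded on $\UU = W^{1,2}_{\Gamma_D}(\Omega;\R^d)$ because $g(z)\le\gamma_2$ and $\C\in L^\infty$, and it is coercive because $g(z)\ge\gamma_1>0$ together with $\C(x)\xi:\xi\ge\gamma_0\|\xi\|^2$ combines with Korn's inequality on $\UU$ — and here the condition $\HH^1(\Gamma_D)>0$ is exactly what makes Korn's inequality available, giving $\|\varepsilon(v)\|_{L^2}\ge c_K\|v\|_{H^1}$ for $v\in\UU$. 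Lax--Milgram then yields that $L_z\colon W^{1,2}_{\Gamma_D}(\Omega;\R^d)\to W^{-1,2}_{\Gamma_D}(\Omega;\R^d)$ is an isomorphism, and the norm of its inverse is bounded by $(\tfrac12\gamma_0\gamma_1 c_K^2)^{-1}$, hence \emph{independently of $z$}.

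Next I would push the integrability exponent past $2$. The coefficient field $x\mapsto \tfrac12\,g(z(x))\,\C(x)$ is measurable, bounded, symmetric and uniformly elliptic, with ellipticity and boundedness constants depending only on $\gamma_0,\gamma_1,\gamma_2$ and $\|\C\|_{L^\infty}$ — in particular not on $z$. Since $\Omega$ is a bounded Lipschitz domain and $\Gamma_D,\Gamma_N$ form a regular partition of $\partial\Omega$ in the sense of Gröger, the $L^p$-theory of Gröger \cite{groger89} and its version for the linear elasticity operator due to Herzog--Meyer--Wachsmuth \cite{hmw11} (see also \cite{krz13,livia2019part2}) provides an exponent $p>2$, depending only on $\Omega$, on the boundary partition, and on the above ellipticity constants, such that $L_z$ remains an isomorphism between $W^{1,\tilde p}_{\Gamma_D}(\Omega;\R^d)$ and $W^{-1,\tilde p}_{\Gamma_D}(\Omega;\R^d)$ for $\tilde p$ in the asserted range; the exponents $\tilde p<2$ are recovered from the case $\tilde p>2$ by a duality argument using the symmetry of $g(z)\,\C$. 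Because the constants entering $p$ are uniform in $z$ by the previous step, one and the same $p$ works for every $z\in\ZZ$. Together with the trivial bound $\|L_z v\|_{W^{-1,\tilde p}_{\Gamma_D}}\le C\,\|v\|_{W^{1,\tilde p}_{\Gamma_D}}$ coming from $g(z)\,\C\in L^\infty$ and Hölder's inequality, this gives exactly the claimed isomorphism property.

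The genuine obstacle is the second step: the higher-integrability/isomorphism statement for the elasticity system with mixed Dirichlet--Neumann conditions on a merely Lipschitz (Gröger-regular) domain is not elementary, and I would not reprove it but instead quote \cite{hmw11,groger89}. Everything else — Korn's inequality, Lax--Milgram, and checking that all constants remain uniform in $z$ — is routine once that theorem is in hand.
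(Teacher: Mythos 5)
Your proposal is correct and follows the same route as the paper, which gives no proof of its own but simply attributes the result to \cite{hmw11}; your write-up merely fills in the standard skeleton (Lax--Milgram with Korn for $\tilde p=2$, then Gröger's $W^{1,p}$-theory for the mixed-boundary elasticity system, with the exponent $p$ uniform in $z$ because $\gamma_1\le g\le\gamma_2$ makes the ellipticity constants $z$-independent) that that reference carries out. One small remark: the stated range $\tilde p\in[0,p]$ is a misprint in the paper, and your duality step for $\tilde p<2$ implicitly restricts to the conjugate range $\tilde p\ge p/(p-1)$, which is the correct interval.
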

Therefore, the reduced energy $ \II_2(t,z) = \inf_{v \in \UU} \EE_2(t,v,z)$ is also well-defined and we can focus on the properties of this part in the overall energy $\II$. 
\begin{Lemma}\label{lem:est_damage_energy}
Let $ d=2$, $p>2$ and the assumptions \eqref{eq:elasticity_tensor}, \eqref{eq:func_g} hold. Then there exist constants $ C_1,C_2,c_3 > 0 $ such that 
\begin{gather}
\II_2(t,z) \geq - C_1 \quad \text{and} \quad \abs{\partial_t\II_2(t,z)} \leq C_2
\end{gather}
as well as 
\begin{equation}\label{eq:delta_DzI2}
\dual{D_z\II_2(t_1,z_1)-D_z\II_2(t_2,z_2)}{v}_{\ZZ^*,\ZZ} \leq c_3 ( \abs{t_2-t_1} + \norm{z_1-z_2}_{L^r(\Omega)} )  \norm{v}_{\ZZ} .
\end{equation}
for every $ r \in [\frac{6p}{p-4},\infty)$, where $p>2$ is as in Lemma~\ref{lem:uniquely_solvable}. Moreover, for any sequences $ t_k \to t$ and $ z_k \weakly z$ in $\ZZ$, it holds
\begin{gather}
D_z\II_2(t_k,z_k) \weakly D_z\II_2(t,z) \quad \text{in } \ZZ^\ast, \\
\II_2(t_k,z_k) \to \II_2(t,z) \quad \text{and} \quad \partial_t\II_2(t_k,z_k) \to \partial_t\II_2(t,z).
\end{gather}  
\end{Lemma}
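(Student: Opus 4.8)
The plan is to work throughout with the unique minimizer $u(t,z)\in\UU$ of $\EE_2(t,\cdot,z)$, which by Lemma~\ref{lem:uniquely_solvable} is well defined and is characterized by the linear elliptic equation
\begin{equation*}
 \int_\Omega g(z)\,\C\varepsilon(u(t,z)+u_D(t)):\varepsilon(w) \dd x = \dual{\ell(t)}{w}_\UU \qquad\forall\,w\in\UU .
\end{equation*}
Since the coefficient $g(z)$ takes values only in $[\gamma_1,\gamma_2]$, the operator norm of $L_z^{-1}$ from Lemma~\ref{lem:uniquely_solvable} is bounded independently of $z$, and together with the boundedness of $u_D,\ell$ in their $C^1$-spaces this yields a uniform bound $\norm{u(t,z)}_{W^{1,p}(\Omega;\R^2)}\le C$ for all $t\in[0,T]$, $z\in\ZZ$; testing the equation with $w=u(t,z)$ and using Korn's inequality on $\UU$ (recall $\HH^1(\Gamma_D)>0$), the coercivity of $\C$ and Young's inequality additionally gives a uniform $H^1$-bound. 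As the minimizer is unique and $\EE_2$ is smooth, a standard envelope/sensitivity argument (as in \cite{hmw11,krz13,livia2019part2}) shows $\II_2(t,z)=\EE_2(t,u(t,z),z)$ and, because the $u$-derivative vanishes at the minimizer,
\begin{align*}
 \partial_t\II_2(t,z) &= \partial_t\EE_2(t,u(t,z),z),\\
 \dual{D_z\II_2(t,z)}{\zeta}_{\ZZ^*,\ZZ} &= \tfrac12\int_\Omega g'(z)\,\zeta\,\C\varepsilon(u(t,z)+u_D(t)):\varepsilon(u(t,z)+u_D(t)) \dd x .
\end{align*}
These identities are the backbone of the whole proof, and I would establish (or simply quote) them first.

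The bounds $\II_2(t,z)\ge -C_1$ and $\abs{\partial_t\II_2(t,z)}\le C_2$ then follow by inserting the uniform $H^1$-bound of $u(t,z)$ into the explicit formulas: for the first, use $g\ge\gamma_1>0$, the coercivity of $\C$, Korn's inequality and Young's inequality to absorb $\dual{\ell(t)}{u(t,z)}_\UU$; for the second, note that $\partial_t\EE_2(t,u,z)=\int_\Omega g(z)\,\C\varepsilon(u+u_D(t)):\varepsilon(\dot u_D(t)) \dd x-\dual{\dot\ell(t)}{u}_\UU$, each term being controlled by $\norm{u(t,z)}_{H^1}$, $\norm{u_D}_{C^1}$ and $\norm{\ell}_{C^1}$.

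The estimate~\eqref{eq:delta_DzI2} is the technical core. Abbreviating $w_i:=u(t_i,z_i)+u_D(t_i)$, the difference $\dual{D_z\II_2(t_1,z_1)-D_z\II_2(t_2,z_2)}{v}_{\ZZ^*,\ZZ}$ splits into a term carrying the factor $g'(z_1)-g'(z_2)$ and one carrying $\C\varepsilon(w_1):\varepsilon(w_1)-\C\varepsilon(w_2):\varepsilon(w_2)=\C\varepsilon(w_1-w_2):\varepsilon(w_1+w_2)$. For the first, bound $\abs{g'(z_1)-g'(z_2)}\le\norm{g''}_{L^\infty}\abs{z_1-z_2}$ and apply Hölder's inequality using the uniform $W^{1,p}$-bound of $w_1$ (so that $\C\varepsilon(w_1):\varepsilon(w_1)\in L^{p/2}$) together with the two-dimensional Sobolev embedding $H^1(\Omega)\embeds L^q(\Omega)$, $q<\infty$, to absorb $v$. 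For the second term one first needs a stability estimate for the solution operator: subtracting the two Euler--Lagrange equations shows that $w_1-w_2$ solves a linear elasticity problem with coefficient $g(z_1)$ and right-hand side assembled from $(g(z_1)-g(z_2))\,\C\varepsilon(w_2)$ and from the increments $u_D(t_1)-u_D(t_2)$, $\ell(t_1)-\ell(t_2)$; using $\varepsilon(w_2)\in L^p$ and $g\in C^2$ with bounded derivatives, this gives $\norm{u(t_1,z_1)-u(t_2,z_2)}_{H^1}\le C(\abs{t_1-t_2}+\norm{z_1-z_2}_{L^r})$ after enlarging the integrability exponent to $r$. Substituting this back, together with the uniform $W^{1,p}$- and $H^1$-bounds and once more the planar Sobolev embedding for $v$, yields~\eqref{eq:delta_DzI2}. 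The only real work is the bookkeeping of the Hölder exponents — checking that a single $r\in[\tfrac{6p}{p-4},\infty)$ makes every product close, given the integrability $p>2$ from Lemma~\ref{lem:uniquely_solvable} — and this is the step I expect to be the main obstacle; conceptually nothing is deep.

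The continuity assertions are then quick corollaries. If $t_k\to t$ and $z_k\weakly z$ in $\ZZ=H^1(\Omega)$, the Rellich--Kondrachov theorem gives $z_k\to z$ strongly in $L^r(\Omega)$ for every finite $r$; hence~\eqref{eq:delta_DzI2} yields $D_z\II_2(t_k,z_k)\to D_z\II_2(t,z)$ strongly, in particular weakly, in $\ZZ^*$, and the stability estimate gives $u(t_k,z_k)\to u(t,z)$ in $H^1$. Passing to the limit in $\II_2(t_k,z_k)=\EE_2(t_k,u(t_k,z_k),z_k)$ and $\partial_t\II_2(t_k,z_k)=\partial_t\EE_2(t_k,u(t_k,z_k),z_k)$ — using $g(z_k)\to g(z)$ pointwise a.e.\ in $\Omega$ and boundedly, the $H^1$-convergence of $u(t_k,z_k)$ and the $C^1$-regularity of $u_D,\ell$ — gives $\II_2(t_k,z_k)\to\II_2(t,z)$ and $\partial_t\II_2(t_k,z_k)\to\partial_t\II_2(t,z)$.
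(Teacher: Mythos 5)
The paper's own ``proof'' of this lemma is a one-line citation to Lemma~2.4, 2.6, 2.8 and Corollary~2.9 of \cite{krz13}; your sketch reconstructs, in the right order, the argument those results implement. The chain you lay out --- a uniform-in-$(t,z)$ $W^{1,p}$ bound for $u(t,z)$ from Lemma~\ref{lem:uniquely_solvable} together with $\gamma_1\leq g\leq\gamma_2$, envelope/Danskin identities for $\partial_t\II_2$ and $D_z\II_2$, a stability estimate for the solution operator obtained by subtracting the two Euler--Lagrange systems, the planar Sobolev embedding $H^1(\Omega)\embeds L^q(\Omega)$ for all $q<\infty$ to close the H\"older products, and Rellich--Kondrachov to upgrade $z_k\weakly z$ in $\ZZ$ to $z_k\to z$ in $L^r(\Omega)$ so that~\eqref{eq:delta_DzI2} delivers the convergence claims --- is exactly the strategy of the cited reference, and it is sound.

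Two caveats worth flagging. Your displayed formula for $D_z\II_2$ omits the term $\int_\Omega f'(z)\,\zeta\,\dd x$ coming from the $\int_\Omega f(z)\,\dd x$ part of $\EE_2$; under the paper's (deliberately unspecified) smoothness assumptions on $f$, e.g.\ $f(z)=(1-z)^2$, this contribution to~\eqref{eq:delta_DzI2} is controlled by $\norm{z_1-z_2}_{L^2}\norm{v}_{L^2}$ and thus harmless, but it should not be dropped silently. Second, you are right that the H\"older-exponent bookkeeping is where the real work lies: note that the stated threshold $\tfrac{6p}{p-4}$ only defines a genuine constraint when $p>4$, which is strictly more than the bare $p>2$ that Gr\"oger's theorem guarantees in general, so the compatibility of the exponents (and the implicit requirement on $p$) is precisely the technical content delegated to~\cite{krz13}. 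Neither issue affects the correctness of your overall route.
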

\begin{proof}
This is a combination of Lemma~2.4, 2.6 and 2.8 as well as Corollary~2.9 from \citep{krz13}.
\end{proof}
With a view to Section~\ref{sec:num}, we set $\II_1(z) = \tfrac{1}{2}\dual{A z}{z}_{\ZZ^*,\ZZ}$ with $ A = - \laplace $. The above Lemma thus guarantees that $ \II(t,z) = \II_1(z) + \II_2(t,z)$ complies with the assumptions \ref{ass:E1} - \ref{ass:E4} and \eqref{eq:DzI-time-Lip} - \eqref{ass:I5} as well as the G{\aa}rding-like inequality \eqref{eq:garding-like}, i.e.
\[ \dual{D_z\II(t,z_1)-D_z\II(t,z_2)}{z_1-z_2}_{\ZZ^*,\ZZ} \geq \alpha \norm{z_1-z_2}_\ZZ^2 - \lambda\norm{z_1-z_2}^2_\VV. \] Indeed, we have the following:

\begin{theorem}
Let $ \II(t,z) = \II_1(z) + \II_2(t,z)$ be given as in \eqref{eq:energy_damage} with $\II_1(z) = \tfrac{1}{2}\dual{A z}{z}_{\ZZ^*,\ZZ}$ where $ A = - \laplace $. Moreover, let $ d=2$, $p>2$ and the assumptions \eqref{eq:elasticity_tensor}, \eqref{eq:func_g} hold. Then $\II$ fulfills \ref{ass:E1} - \ref{ass:E4} and \eqref{eq:DzI-time-Lip} - \eqref{ass:I5} as well as the G{\aa}rding-like inequality \eqref{eq:garding-like}. In particular, there exists at least one parametrized solution to the rate-independent system defined by $\II$ and $\RR$ as given \eqref{eq:energy_damage} and \eqref{eq:diss_damage}, respectively.  
\end{theorem}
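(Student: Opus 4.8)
The plan is to verify, one by one, the structural hypotheses of Section~\ref{sec:assumptions} for the additive decomposition $\II = \II_1 + \II_2$, handling the two summands separately and then adding. The summand $\II_1(z) = \tfrac12\dual{Az}{z}_{\ZZ^*,\ZZ}$ with $A = -\laplace$ is a bounded, symmetric, nonnegative quadratic form on $\ZZ = \HO$; consequently it is $C^\infty$, convex and hence weakly lower semicontinuous, time-independent, and $D_z\II_1(z) = Az \in \LL(\ZZ,\ZZ^*)$. For $\II_2(t,z) = \inf_{v\in\UU}\EE_2(t,v,z)$ I would rely entirely on Lemmas~\ref{lem:uniquely_solvable} and~\ref{lem:est_damage_energy}: the former gives unique solvability of the inner elasticity problem with smooth dependence on $z$, from which the $C^1$-regularity of the reduced functional on $[0,T]\times\ZZ$ follows; the latter supplies the uniform lower bound $\II_2(t,z)\ge -C_1$, the uniform bound $|\partial_t\II_2(t,z)|\le C_2$, the required time–state continuities of $\II_2$, $\partial_t\II_2$ and $D_z\II_2$, and the key estimate \eqref{eq:delta_DzI2}.

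With these facts in hand, assumptions \ref{ass:E1}--\ref{ass:E4} follow by combination. \ref{ass:E1} is immediate. For \ref{ass:E2} one adds the nonnegative, weakly l.s.c.\ form $\II_1$ to the bound $\II_2\ge -C_1$; coercivity on $\ZZ$ comes from the quadratic part of $\II_1$ (supplemented, if one retains it, by the term $\int_\Omega f(z)$). For \ref{ass:E3} one uses $\partial_t\II = \partial_t\II_2$ and chooses $\mu\equiv 1$ together with $\beta$ large enough that $\II(t,z)+\beta\ge C_2$ for all $(t,z)$, which is possible since $\II$ is bounded below. \ref{ass:E4} holds because $\partial_t\II_1\equiv 0$ while $\partial_t\II_2(t_k,z_k)\to\partial_t\II_2(t,z)$ by Lemma~\ref{lem:est_damage_energy}, and then \eqref{eq:lower_semicont_combined} is automatic. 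The time-Lipschitz estimate \eqref{eq:DzI-time-Lip} is \eqref{eq:delta_DzI2} specialized to $z_1=z_2=z$ (with $C(\rho)=c_3$, even $\rho$-independent), and the $(\mathrm{strong},\mathrm{weak})$--weak continuity \eqref{ass:I5} combines the weak-weak continuity of the bounded linear operator $A$ with the convergence $D_z\II_2(t_k,z_k)\weakly D_z\II_2(t,z)$ from Lemma~\ref{lem:est_damage_energy}.

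The point that needs genuine work is the G\aa rding-like inequality \eqref{eq:garding-like}. For $\II_1$ the difference quotient is exact: $\dual{D_z\II_1(z_1)-D_z\II_1(z_2)}{z_1-z_2}_{\ZZ^*,\ZZ} = \norm{\nabla(z_1-z_2)}_{\LO}^2 \ge \norm{z_1-z_2}_\ZZ^2 - C\,\norm{z_1-z_2}_\VV^2$, using $\norm{\cdot}_\ZZ^2 = \norm{\nabla\cdot}_{\LO}^2+\norm{\cdot}_{\LO}^2$ and that $\norm{\cdot}_\VV$ is a fixed multiple of $\norm{\cdot}_{\LO}$. For $\II_2$ I plug $v = -(z_1-z_2)$ into \eqref{eq:delta_DzI2} at $t_1=t_2$, obtaining the lower bound $-c_3\,\norm{z_1-z_2}_{L^r(\Omega)}\norm{z_1-z_2}_\ZZ$; since $d=2$ the embedding $\HO\embeds L^r(\Omega)$ holds for every finite $r$, so a value of $r$ in the range demanded by Lemma~\ref{lem:est_damage_energy} is available, and a Gagliardo--Nirenberg interpolation $\norm{w}_{L^r(\Omega)}\le C\,\norm{w}_{\LO}^{\theta}\norm{w}_{\HO}^{1-\theta}$ followed by Young's inequality absorbs the cross term into $\tfrac12\norm{z_1-z_2}_\ZZ^2 + C_\varepsilon\,\norm{z_1-z_2}_\VV^2$. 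Summing the two contributions yields \eqref{eq:garding-like}, in fact with $\alpha$ independent of $\rho$. I expect this absorption step, together with the $C^1$-regularity in \ref{ass:E1} (which is really delegated to Lemma~\ref{lem:uniquely_solvable}), to be the only nonroutine part; everything else is bookkeeping.

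Finally, for the existence conclusion I would record that $\RR$ from \eqref{eq:diss_damage} satisfies \ref{ass:R1}--\ref{ass:R3} with $\XX = L^1(\Omega)$ — it is proper, convex, lower semicontinuous, positively $1$-homogeneous, and $\RR(v) = \kappa\norm{v}_{L^1(\Omega)}\ge\kappa\norm{v}_\XX$ on its domain — that the chain $\ZZ = \HO\embeds^{c,d}\VV = \LO\embeds\XX$ and the reflexivity and separability of $\ZZ,\VV$ hold on the bounded Lipschitz domain $\Omega$, and that the initial datum is assumed locally stable, i.e.\ $-D_z\II(0,z_0)\in\partial\RR(0)$. Taking $\RR_\delta = \RR$ and $z_0^\delta = z_0$ (which trivially fulfil \eqref{ass:Gamma-Conv-R} and the convergence hypothesis), Theorem~\ref{thm:main-conv} then applies and produces at least one parametrized solution of the rate-independent system governed by $\II$ and $\RR$ as in \eqref{eq:energy_damage} and \eqref{eq:diss_damage}.
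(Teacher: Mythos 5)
Your proposal follows the paper's own route exactly --- delegating all properties of $\II_2$ to Lemma~\ref{lem:est_damage_energy}, handling $\II_1 = \tfrac12\dual{Az}{z}$ directly through the elementary properties of $A=-\laplace$, combining them via the Gagliardo--Nirenberg/Young absorption for the G\aa rding estimate, and then invoking Theorem~\ref{thm:main-conv} with $\RR_\delta=\RR$ --- only spelling out the bookkeeping that the paper's one-line proof leaves to the reader. Your parenthetical on \ref{ass:E2} is in fact the one place that genuinely needs care: $\tfrac12\norm{\nabla z}_{\LO}^2 + \II_2$ with $\II_2\ge -C_1$ alone is not coercive on $\HO$ (constants slip through), so the coercivity must indeed be supplied by the quadratic growth of $\int_\Omega f(z)\,\dd x$ hidden inside $\II_2$, a point the paper's proof leaves implicit.
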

\begin{proof}
The conditions \ref{ass:E1} - \ref{ass:E4} and \eqref{eq:DzI-time-Lip} - \eqref{ass:I5} follow immediately from the above Lemma~\ref{lem:est_damage_energy}. In addition, the G{\aa}rding-like inequality \eqref{eq:garding-like} is an easy consequence of the properties of $A$ and the inequality in \eqref{eq:delta_DzI2}. Thus, we see that $\II$ fulfills all assumptions from Section~\ref{sec:assumptions} so that applying Theorem~\ref{thm:main-conv} proofs the existence of a parametrized solution.
\end{proof}

As seen above Theorem~\ref{thm:main-conv} guarantees the existence of at least one parametrized solution for \eqref{subdiff-inclusion} in the setting of (partial) damage here. What is more, we may approximate such a solution by the local incremental stationarity scheme (\nameref{alg:locmin}), which is purpose of the following subsections.

\subsection{Finite Element discretization}\label{sec:FE}
As the convergence analysis from Theorem~\ref{thm:main-conv} allows us to use an approximation $\RR_\delta$ of the dissipation $ \RR$, we may use some finite element discretization to approximate a parametrized solution. Hence, we assume that a family $\{\TT_h\}_{h>0}$ of shape-regular triangulations of the domain $\Omega$ be given. Herein, $h$ denotes the mesh size defined by $h := \max_{T\in \TT_h} \diam(T)$. To keep the discussion concise, we also assume that $\Omega$ is a polygon and polyhedron, respectively, and that the triangulations exactly fit the boundary. For the discrete space, we choose the space of piecewise linear and continuous test functions, i.e., 
\begin{align*}
&\UU_h := \{ u \in C(\bar\Omega;\R^2) : u|_{T} \in \PP_1 \; \forall\, T \in \TT_h , \; u|_{\Gamma_D}= 0 \}. \\
\text{and} \quad &\ZZ_h := \{ v\in C(\bar\Omega) : v|_{T} \in \PP_1 \; \forall\, T \in \TT_h \}.
\end{align*}
In addition, we set 
\[ \RR_h(v) = \begin{cases} \RR(v), & v \in \ZZ_h \\ + \infty, & \text{else} \end{cases}. \]
By standard arguments, the lower inequality in \eqref{ass:Gamma-Conv-R} is satisfied for $ \delta = h \to 0$. For the upper inequality assume that $\RR(z) < \infty$, i.e., $z \geq 0 $ a.e. in $\Omega$ (otherwise there is nothing to show). Since $\Omega$ has a Lipschitz-boundary we can extend $z$ beyond $\Omega$ (cf. \citep[A8.12]{alt16en}) and use standard convolution in order to obtain an approximation $ z_k \in C^\infty(\overline{\Omega})$. By the construction of the extension and the non-negativity of the convolution kernel, $z_k$ is also non-negative. Moreover, we have $\norm{z_k-z}_\ZZ \to 0 $ for $ k \to \infty$. For $z_k \in C^\infty(\overline{\Omega})$ the classical, pointwise Lagrange-interpolation $ I_h $ is well-defined so that $ \norm{z_k - I_h(z_k)}_{\ZZ} \to 0 $ for $ h \to 0$. Hence, for any $ k \in \N$, there exists $h_k > 0 $ with $ \norm{z_k - I_{h}(z_k)}_{\ZZ} \leq 1/k$ for all $h \leq h_k$. W.l.o.g. we may assume that $ \{h_k\}_{k \in \N} $ is strictly monotonic decreasing. Therewith, we define $ z_h := I_{h}(z_k) $ for $ h_k \geq h > h_{k+1} $. Combining the above properties, we find that $ z_h \geq 0$ a.e. in $\Omega$ with $ z_h \to z$ in $\ZZ$ for $ h \to 0 $ and 
\[ \limsup_{h \to 0} \RR_{h}(z_h) = \lim_{h \to 0} \RR(z_h) = \RR(z) \]
by the continuity of $\RR$ on the set of non-negative functions in $\ZZ$. 


Before we proceed, let us set some notation. Given the triangulation $\TT_h$ the associated nodes and nodal basis are denoted by $x_i$ and $\varphi_i$, $i = 1, ..., N_h$, respectively. Moreover, given a function $z_h\in \ZZ_h$, we denote the coefficient 
vector of $z_h$ w.r.t.\ the nodal basis by $\bz = (z_1, ..., z_{N_h}) \in \R^{N_h}$, i.e., 
$z_h(x) = \sum_{i=1}^{N_h} z_i \,\varphi_i(x)$. Therewith, we may write 
\[  \int_\O z_h(x) \dd x = \bm^\top \bz 
 \quad \text{with} \quad \bm = (m_1, ..., m_{N_h}):= M \mathds{1}, \]
where $M_{ij} = \int_\Omega \varphi_i\,\varphi_j\,\d x \in \R^{N_h\times N_h}$ is the mass matrix and 
$\mathds{1} = (1, ..., 1) \in \R^{N_h}$. Thus, by the nonnegativity of $\varphi_i$, the discrete dissipation potential $\RR_h: \ZZ_h \to \R \cup \{ \infty \}$ can be written as
\begin{equation}
 \RR_h(z_h) := \left. \begin{cases} \kappa \, \bm^\top \bz, & z_i \geq 0 \; \forall i=1,...,N_h \\
 +\infty, & \text{else} \end{cases} \right\} = \kappa \bm^\top \bz + I_K(\bz),
 \label{eq:equiv_Rh}
\end{equation}
with the indicator functional $I_K$ corresponding to the cone $ K := \R^{N_h}_{\geq 0}$.

\subsection{Discrete energy functional}\label{sec:discr_energy}
In analogy to $\bz$ we denote by $ \bu = (u_{1,1}, \, u_{1,2},\, u_{2,1}, \, u_{2,2} ..., \, u_{N_h,2}) \in \R^{2 N_h}$ the coefficient vector corresponding to $ u_h \in \UU_h$. For the discrete version of the energy $\EE$ we use some slightly altered ansatz, namely
\begin{multline}
\tilde{\EE}(t,z_h,u_h) = \frac{1}{2} \int_\O | \nabla z_h |^2 \dd x \\
+ \frac{1}{2} \sum_{T \in \TT} \int_T g(z_h(x_{T}))\, \C \varepsilon(u_h+u_D(t)):\varepsilon(u_h+u_D(t)) \dd x - \dual{\ell(t)}{u_h}_\UU
\end{multline}
where $ x_T $ denotes the center of $ T$. It is clear that the minimization of $\tilde{\EE}$ with respect to $ u_h $ also provides a unique solution $ \bar{u}_h $ for every $ t \in [0,T]$ and $ z_h \in \ZZ_h$. Let $ S_h : \ZZ_h \to \UU_h$ denote the corresponding solution operator, i.e. it holds for all $ z_h \in \ZZ_h$ that
\begin{equation}\label{eq:discr_sol_operator}
\bar{u}_h = S_h(z_h) \quad \Leftrightarrow \quad D_u \tilde{\EE}(t,z_h,\bar{u}_h) = 0 . 
\end{equation}
With this operator at hand, we can also reduce the discrete energy $ \tilde{\EE} $ to the discrete damage variable $ z_h$. Hence, we define $\tilde{\II} : \ZZ_h \to \R$ as $ \tilde{\II}(t,z_h) = \tilde{\EE}(t,z_h,S_h(z_h))$. 
Altogether, with a little abuse of notation, we denote the reduced energy functional considered as mapping acting on the coefficient vector by $\bI : \R^{N_h} \to \R$.

%

\subsection{Numerical solution of the local minimization problems}\label{sec:mini}
With all the notations above, particularly the description of $\RR_h$ in \eqref{eq:equiv_Rh}, the stationary equation \eqref{eq:alg.zupdate} is equivalent to the following problem for the coefficient vector $\bz^k$: 
\begin{equation}\label{eq:reformAlg1}
  \exists \bq \in \partial I_K (\bz^k-\bz^{k-1}), \; \bp \in \partial I_\tau(\bz^k-\bz^{k-1}):  \quad D_z\bI(t_{k-1},\bz^k) + \bm + \bq + \bp = 0 .
\end{equation}
Here and for the rest of this section, we abbreviate $t_{k-1}^\ind$ simply by $t_{k-1}$ as well as $\omega = \tfrac{1}{\abs{\Omega}}$. Inserting the characterizations of $\partial I_K$ and $ \partial I_\tau$ and taking $G(\bz) = \frac{1}{2} ((\bz-\bz^{k-1})^\top \omega M (\bz-\bz^{k-1}) - \tau^2)$, we therefore find that
\begin{equation}\label{eq:minfin}
\eqref{eq:reformAlg1} \quad \Leftrightarrow \quad \left\{\quad 
 \begin{gathered}
	D_z\bI(t_{k-1},\bz^k) + \kappa \bm + \bq + \lambda_k \, \omega \, M \, (\bz^k - \bz^{k-1}) = 0, \\
	\lambda \geq 0, \quad \lambda \, G(\bz^k) = 0, \quad G(\bz^k) \leq 0 \\
	\bq \leq 0, \quad \bq^\top (\bz_k-\bz_{k-1}) = 0, \quad \bz^k - \bz^{k-1} \geq 0
 \end{gathered} \quad \right\}
\end{equation}
which can be equivalently formulated as 
\begin{equation}\label{eq:stationary_reformulation}
 \left\{\quad 
 \begin{aligned}
	D_z\bI(t_{k-1},\bz^k) + \kappa \bm + \bq + \lambda_k \, \omega \, M \, (\bz^k - \bz^{k-1}) &= 0, \\
	\max\{ -\lambda, G(\bz^k) \} &= 0, \\
	\max\{ q_i , - (z_i^k - z_{i}^{k-1}) \} &= 0 .
 \end{aligned} \quad \right\}
\end{equation}

\subsection{Numerical results}
For our numerical tests, we use two benchmark tests from \cite{hackldimitrijevic}. In any of these cases we set the external volume and surface forces to zero, so that $\ell \equiv 0$, and the softening function $g$ to $ g(z) = \exp(-z) + \varepsilon$ with $\varepsilon = 0.01$. The numerical computations are performed with \textsc{Matlab}$^\copyright$ and the linear systems of equations 
arising in each semi-smooth Newton step (cf. Section~\ref{sec:app_num}) are solved by \textsc{Matlab}'s inbuilt direct solver based on UMFPACK. The implementation for the elasticity part of the energy relies on the \textsc{Matlab} code from \cite{matlab_elasticity}.

\subsubsection*{Example I: Pre-cracked brick}

\begin{figure}[t]
\begin{minipage}{0.6\textwidth}
\center
\begin{tikzpicture}[scale=0.66]

\node at (5,4.3) {};

\fill[blue!30!white] (5,2) rectangle (10,0);
\node at (7.5,1) {$\Omega$};

\draw[thick] (0,0) rectangle (10,4); 
\draw[line width=1mm] (5,0) -- (5,0.8);
\draw[line width=1mm] (5,4) -- (5,3.2);
\draw[color=blue, thick, dashed] (5,2) -- (5,0.8); 
\draw[color=red, thick, dashed] (10,2) -- (10,0); 
\node at (5.3,1.4) {$ \Gamma_1 $};
\node at (9.7,1) {$ \Gamma_D $};


\draw[color=green!70!black, thick, dashed] (5,0.8) -- (5,0); 
\draw[color=green!70!black, thick, dashed] (5,0) -- (10,0); 
\draw[color=blue, thick, dashed] (5,2) -- (10,2);
\node at (5.4,0.4) {$ \Gamma_N $};
\node at (7.5,0.3) {$ \Gamma_N $};
\node at (7.5,1.7) {$ \Gamma_2 $};

\draw[|<->|, color=gray] (4.3,0) -- (4.3,2);
\draw[|<->| ,color=gray] (10,-0.2) -- (5,-0.2);
\draw[|<->|, color=gray] (4.8,0) -- (4.8,0.8);

\node[color=gray] at (4.6,0.4) {$ c $};
\node[color=gray] at (7.5,-0.4) {$ a $};
\node[color=gray] at (4.1,1) {$ b $};

\foreach \x in {0,1,2,3,4,5,6,7,8,9,10} {
\draw[->] (10,0.4*\x) -- (10.4,0.4*\x);
\draw[->] (0,0.4*\x) -- (-0.4,0.4*\x);
}  

\end{tikzpicture}

\end{minipage}
\hfill
\begin{minipage}{0.36\textwidth}
\center
\renewcommand{\arraystretch}{1.2}
\begin{center}
\begin{tabular}{ l r }
 \multicolumn{2}{c}{parameters} \\
 \hline
 $\kappa$ {\scriptsize{[MPa] }} & 0.1 \\ 
 $E$ {\scriptsize{[GPa] (Young's modulus)}} & 18.0 \\  
 $ \nu$ ({\scriptsize{Poisson's ratio}}) & 0.2  \\
 $\alpha$ {\scriptsize{[MPa$\cdot mm^2$]} } & 1.0 \\
 $a$ {\scriptsize{[$mm$]} } & 100 \\
 $b$ {\scriptsize{[$mm$]} } & 40 \\
 $c$ {\scriptsize{[$mm$]} } & 16
 
\end{tabular}
\end{center}
\end{minipage}

\caption{Geometry of the domain (left); Table of parameters (right).}

\label{fig:geometry_precracked}
\end{figure}
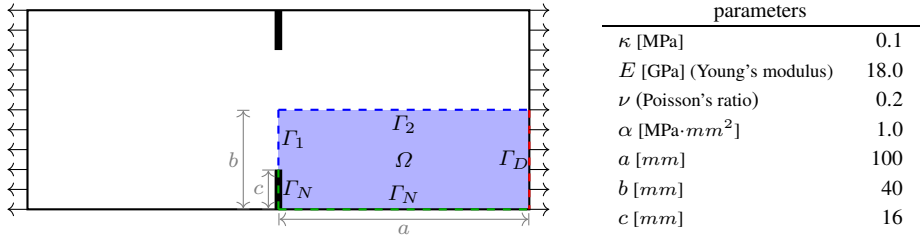

\begin{figure}[t]
   \begin{subfigure}[t]{0.45\textwidth} 
      \includegraphics[width=\textwidth, height=3cm]{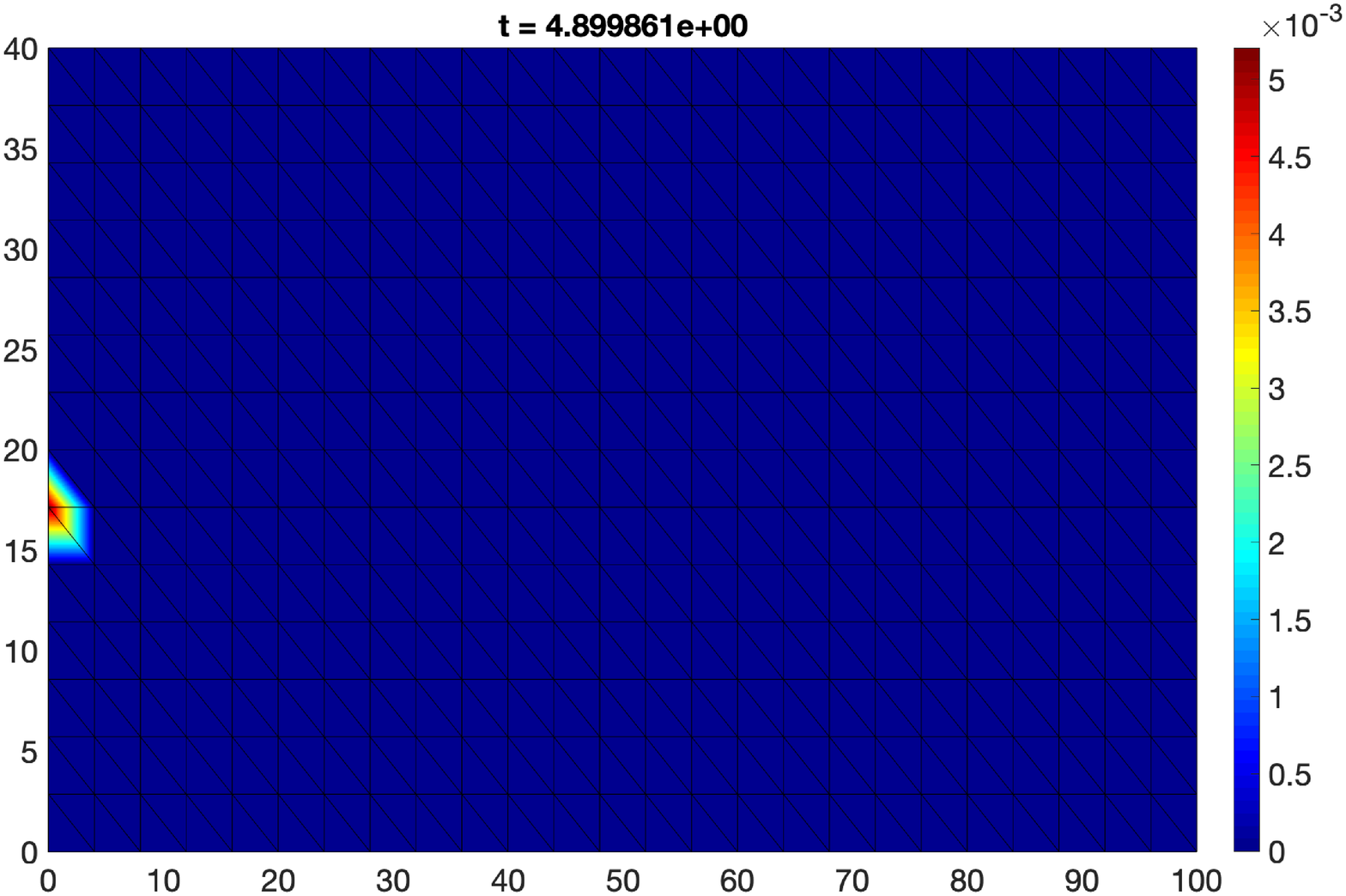} 
      \caption{$t \approx 4.9$} 
      \label{fig:fig1} 
   \end{subfigure}\hfill%
   \begin{subfigure}[t]{0.45\textwidth} 
   \includegraphics[width=\textwidth, height=3cm]{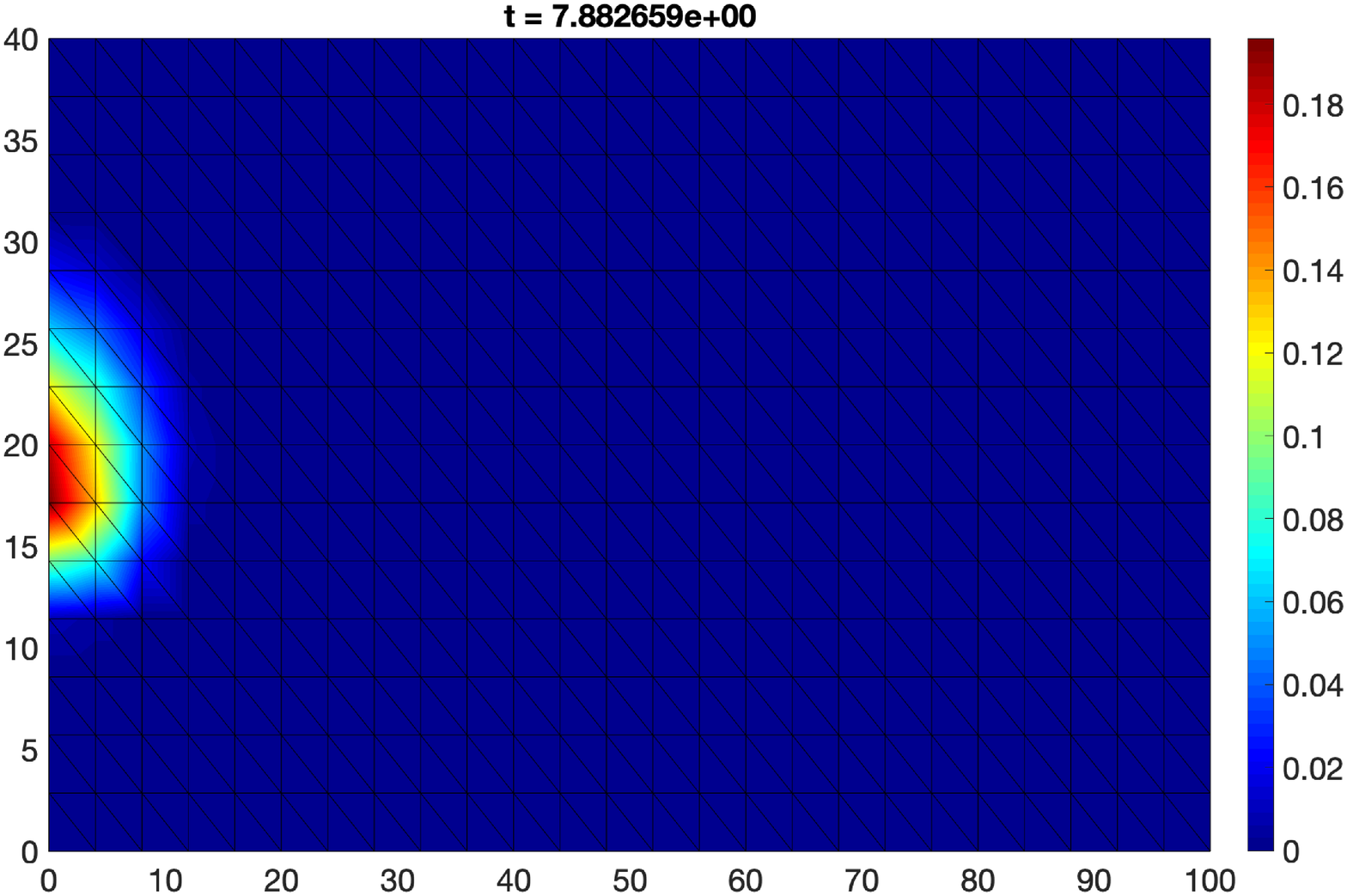}
      \caption{$ t \approx 7.88 $} 
      \label{fig:fig2} 
   \end{subfigure}
   \begin{subfigure}[t]{0.45\textwidth} 
      \includegraphics[width=\textwidth, height=3cm]{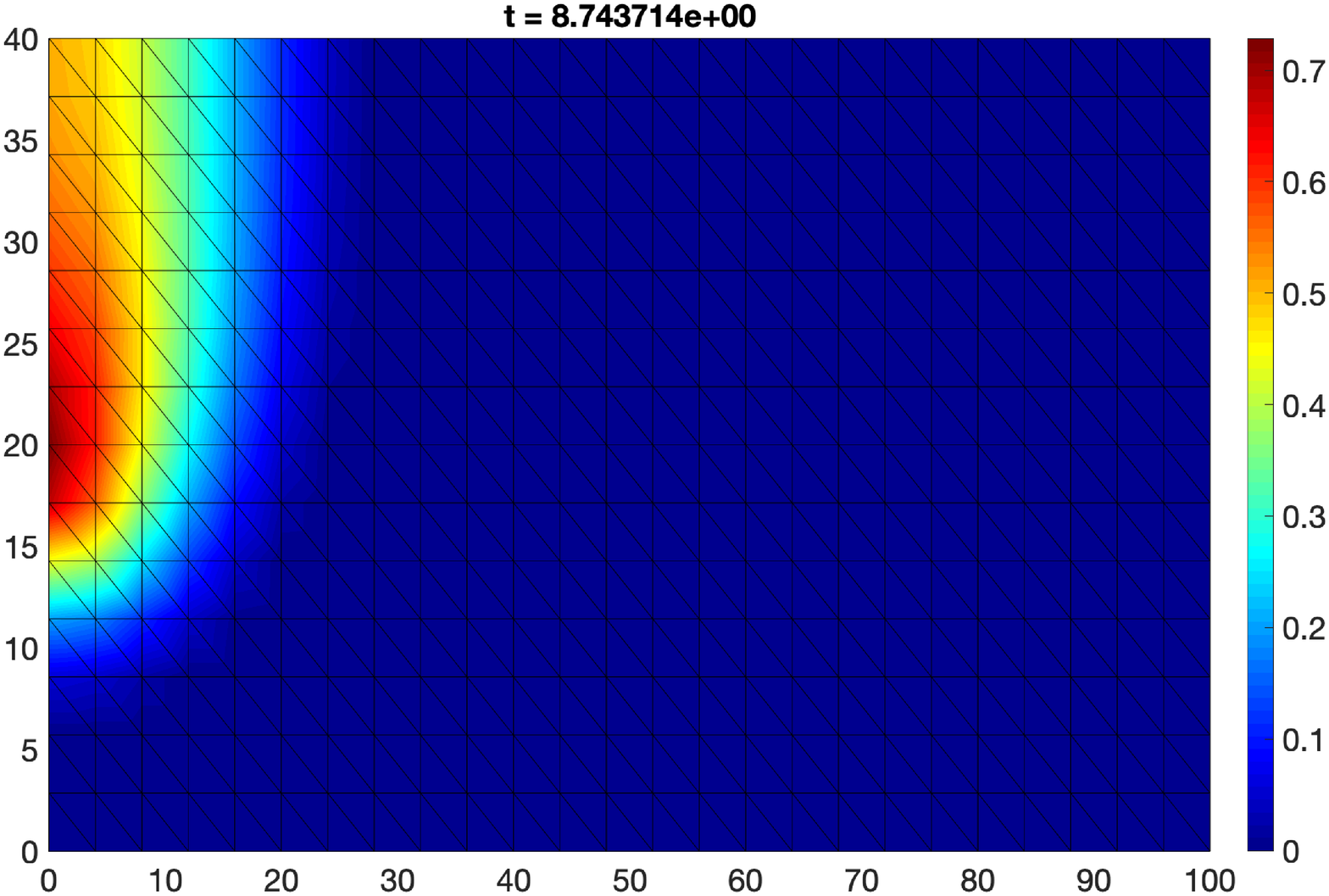} 
      \caption{$ t \approx 8.74$ } 
      \label{fig:fig3} 
   \end{subfigure}\hfill%
   \begin{subfigure}[t]{0.45\textwidth} 
   \includegraphics[width=\textwidth, height=3cm]{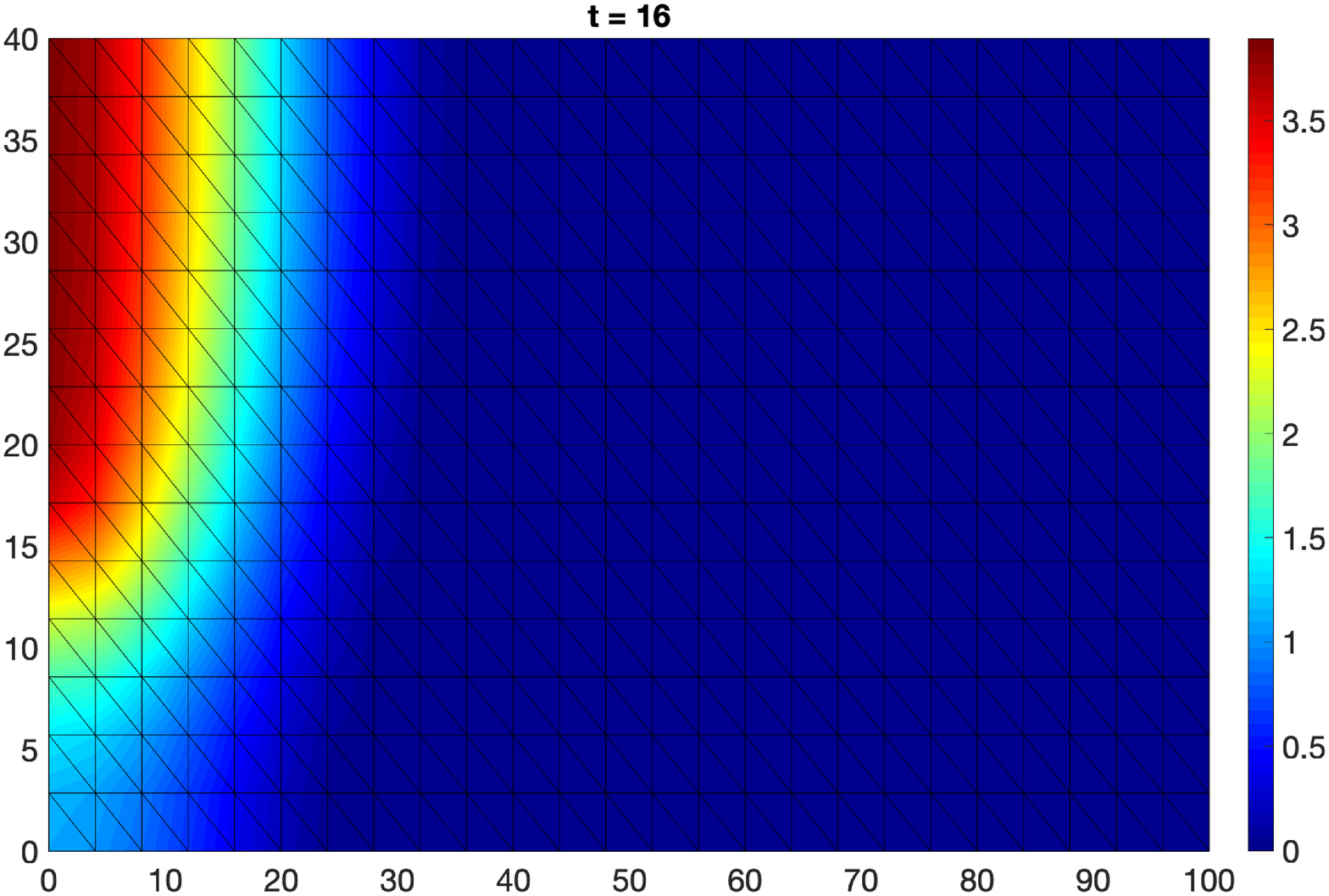}
      \caption{$ t = 16$ } 
      \label{fig:fig4} 
   \end{subfigure}
   \caption{State of the dimensionless damage variable $z_h$ at different points in time. Note that the distribution of the colormap varies with the time.}
   
  \label{fig:results_precracked}
\end{figure}

The geometry for this example is shown in Figure~\ref{fig:geometry_precracked}. Due to its symmetry the computation is performed using only one quarter of the whole system. Therefore, the symmetry axes become parts of the boundary of $\Omega$ and we impose the following symmetry boundary conditions
\[ u_1 = 0 \quad \text{on } \Gamma_1 = \{ 0 \} \times [16, 40] \quad \text{ and }\quad u_2 = 0 \quad \text{on } \Gamma_2 = [0,100] \times \{ 40 \} . \]
During the time interval $[0,16]$ the workpiece is strechted at its ends, which we realize by Dirichlet conditions on $\Gamma_D$, i.e. \[ u_1 = t , \quad u_2 = 0 \quad \text{on } \Gamma_D = \{ 100 \} \times [0, 40] . \] 
Furthermore, we use the parameters given in the table of Figure~\ref{fig:geometry_precracked} (right). We initiate the evolution with $ z_0 \equiv 0$ and choose $\tau = 0.1$ for the time step size. The state of damage at several points during the time interval $[0,16]$ are shown in Figure~\ref{fig:results_precracked}. Obviously, damage occurs at first at the tip of the crack and evolves along the symmetry axis $ \Gamma_1$ afterwards. 
As expected, it also concentrates on regions with large stresses. However, the sharpness of interfaces between damaged and undamaged areas highly depends on the choice of the functional $\II_1$, see \cite{bartels2018numerical}. In our case, this interface is rather diffuse and cannot be sharpened by a refinement of the mesh, see Figure~\ref{fig:precracked_refined}. Clearly, one may reduce the factor $\alpha$ included in the operator $A$. However, this leads to instabilities in the semismooth Newton method and globalization strategies might be necessary, which is subject of future research.
Nevertheless, the results are stable with respect to mesh refinement, which can be observed in Figure~\ref{fig:force_displacement_diagram} that shows the force-displacement-diagram for three different mesh sizes. The reaction force is calculated by integrating the stress $\sigma$ in normal direction along the boundary $\Gamma_D$. In comparison with the results from \cite{hackldimitrijevic}, there is a high degree of conformity, except for the larger values of the reaction force here after reaching its maximum at approximately $ 0.08 \, mm $ of displacement, cf. also with \textsl{Example II}. With regard to the time function $\hat{t}$ that is depicted in Figure~\ref{fig:time} we see that the spreading of the damage area starting at $ t \approx 8 $ is slightly faster than the rest of the evolution but does not cause a jump. 
\begin{figure}[t]
   \centering
      \includegraphics[width=0.8\textwidth]{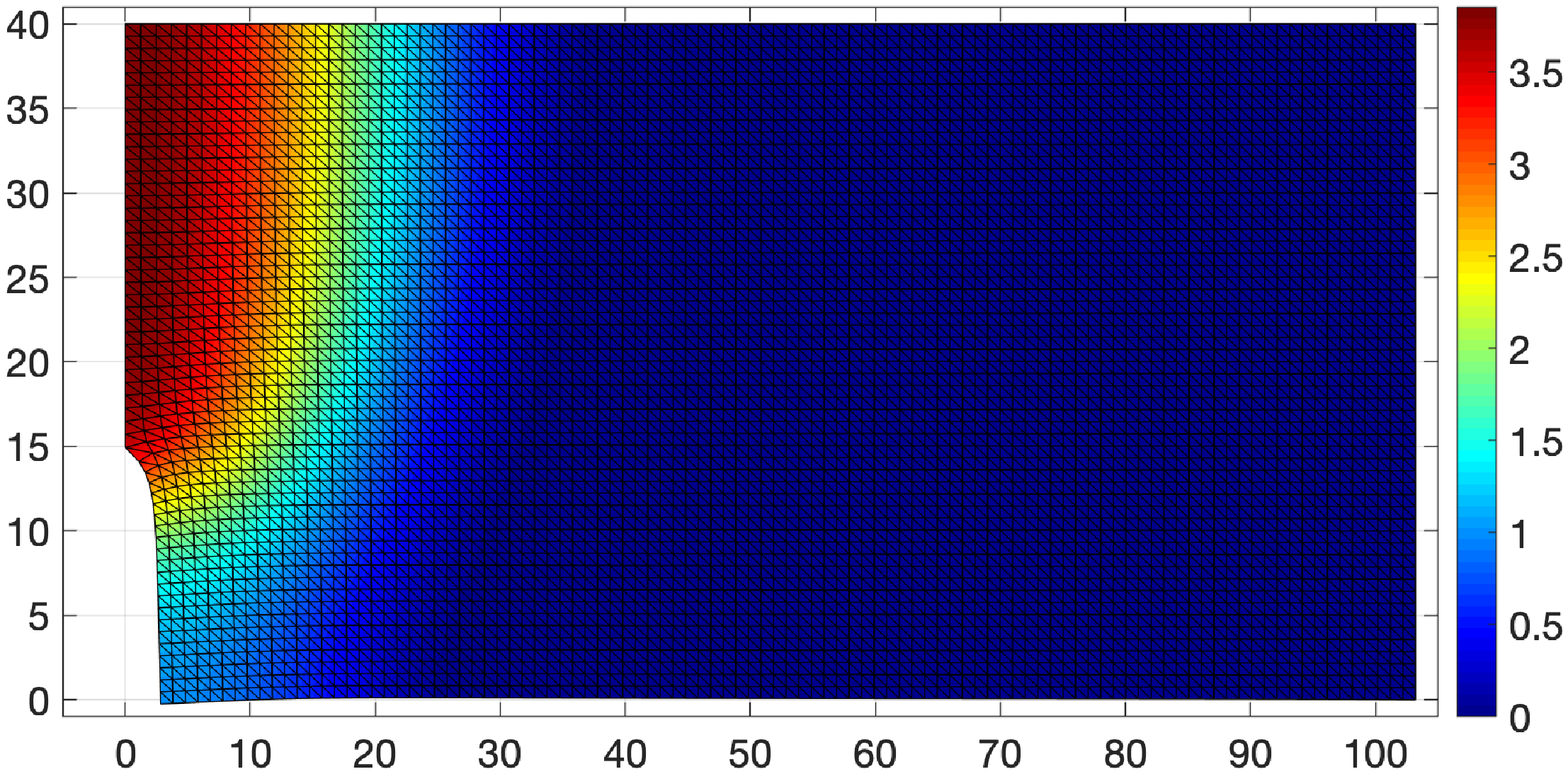} 
      \captionof{figure}{Refined and 
      deformed mesh combined with the state of the dimensionless damage variable at the final time $ t = 16$. Note that the displacement is magnified by a factor of $20$.} 
      \label{fig:precracked_refined} 
      
   \begin{minipage}[t]{0.48\textwidth} 
   \centering
   \includegraphics[width=0.85\textwidth]{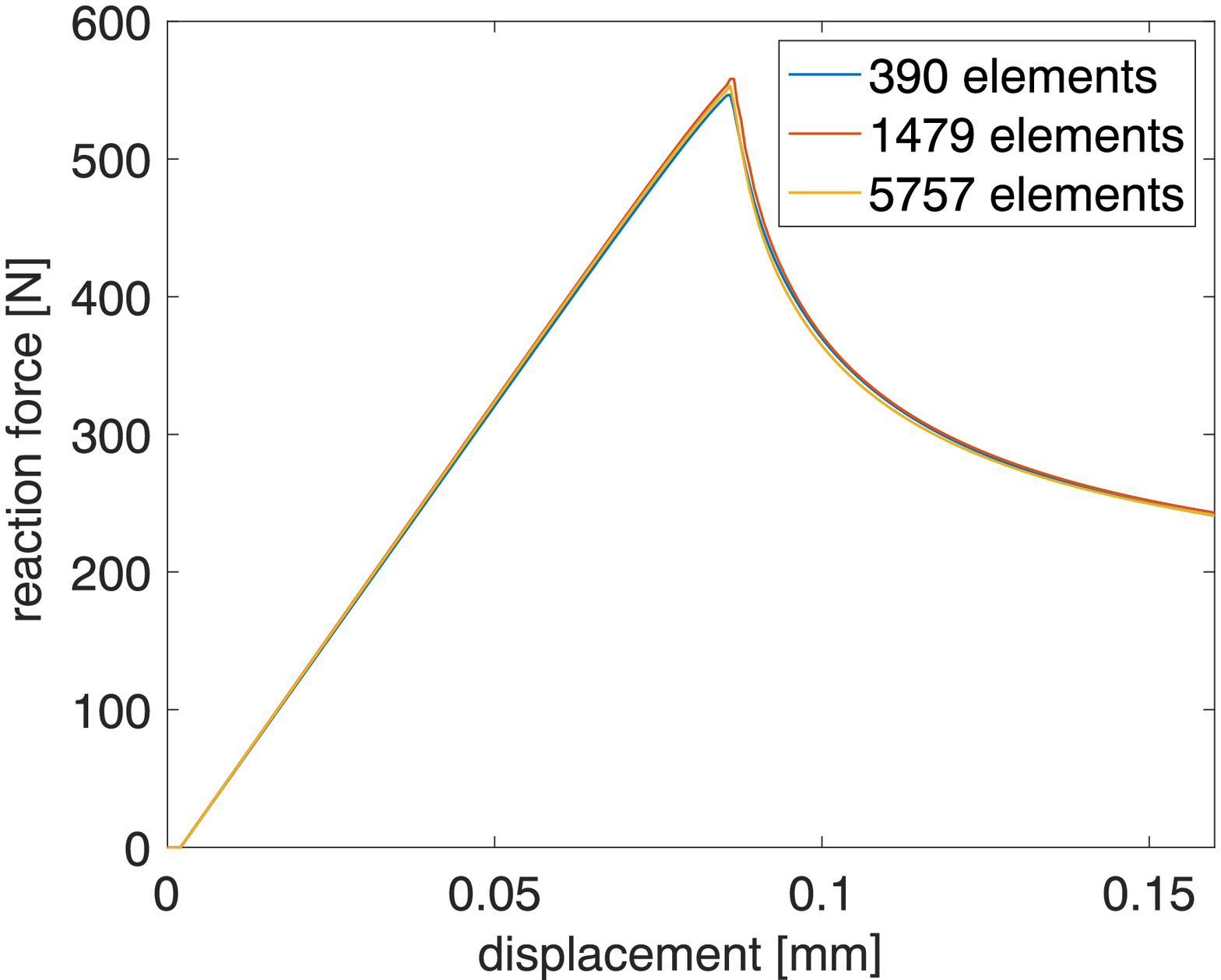}
      \captionof{figure}{Reaction Force on the boundary $\Gamma_D$ depending on the displacement $u_D$ for three different mesh sizes.} 
      \label{fig:force_displacement_diagram} 
   \end{minipage}\hfill
      \begin{minipage}[t]{0.48\textwidth} 
   \centering
   \includegraphics[width=0.85\textwidth]{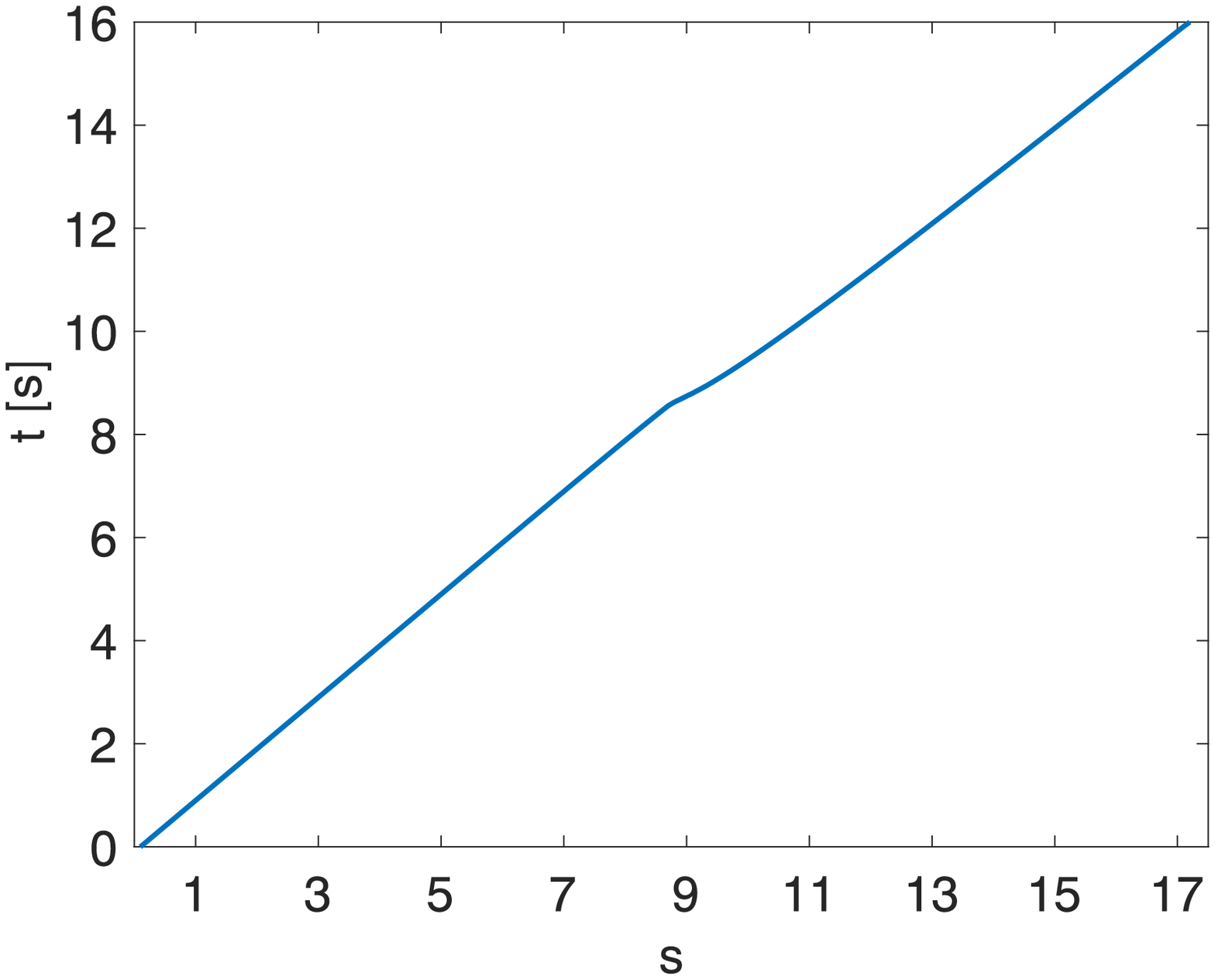}
      \captionof{figure}{Function $ \hat{t} $ in dependence of the artificial time $s$.} 
      \label{fig:time} 
   \end{minipage}
\end{figure}

Let us finally compare the results obtained with an alternative approach. Instead of incorporating the constraint $ \norm{z_k-z_{k-1}}_\VV \leq \tau $ which corresponds to an $L^2$-ball of size $ \tau$ it is convenient to take $ \norm{z_k-z_{k-1}}_{L^\infty} \leq \tau $. Indeed, as we have already imposed lower bounds for $ \bz^k$, namely $ z^k_i \geq z^{k-1}_i $, it seems appropriate from a numerical point of view to also include $ z^k_i \leq z^{k-1}_i + \tau $ as a constraint since this leads to pointwise box constraints that are particularly well suited to semismooth Newton-methods, see e.g. \cite{HIKprimaldual02}. Unfortunately, this choice is not covered by our convergence analysis for the local stationarity scheme (\nameref{alg:locmin}). Nevertheless, we provide the numerical results that can be obtained by using this version. Thus, while we change \eqref{eq:stationary_reformulation} to
\begin{equation}
 \left\{\quad 
 \begin{aligned}
	D_z\bI(t_{k-1},\bz^k) + \kappa \bm + \bq  &= 0, \\
	\max\{ q_i, (z_i^k - z_{i}^{k-1}) - \tau \} &= 0 \\
	\max\{ q_i , - (z_i^k - z_{i}^{k-1}) \} &= 0 
 \end{aligned} \quad \right\}
\end{equation}
we keep all parameters unaltered. Clearly, the Newton-matrix $H_n$ also changes in the obvious way. The difference between both solutions using \nameref{alg:locmin} is shown in Figure~\ref{fig:damage_difference}. The time-function $\hat{t}$ for the solution using box-constraints is given in Figure~\ref{fig:time_hole}. It is easy to see that both solutions, using $L^2$- and $L^\infty$-constraints, are very similar. Indeed, the $L^2$ and $L^\infty$ distance at the end time $ T = 16 $ calculates to $ \norm{z_{h,2} - z_{h,\infty}}_{L^2(\Omega)} = 2.5 \cdot 10^{-2}$. Since the incorporation of box constraints is natural in this context and its implementation is also easier to realize it provides an interesting topic for further research.

A possible approach for this is to consider the $ p$-Laplacian with $ p>d$ for the regularization instead of $ A = - \laplace$ as proposed in \cite{krz15}. In this case, we have $ \ZZ = W^{1,p}(\Omega) \embeds^{c,d} C(\overline{\Omega})$ which may open up the opportunity to use the $L^\infty$-norm as constraint in \eqref{eq:locmin_h}. 

\begin{figure}[t]
   \begin{minipage}[t]{0.48\textwidth} 
      \includegraphics[width=\textwidth]{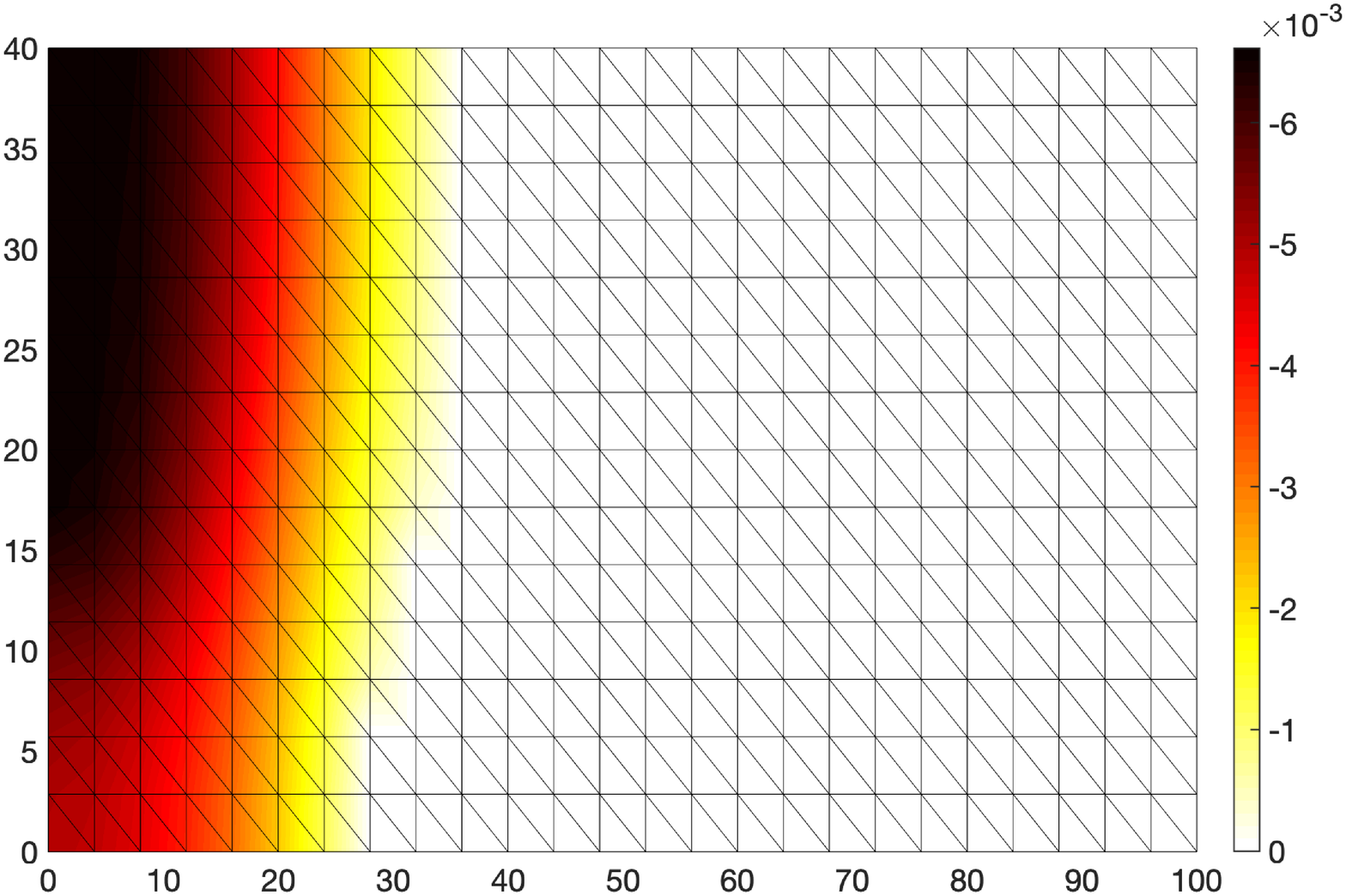} 
      \captionof{figure}{Difference $ (z_{h,2} - z_{h,\infty})$ of the two solutions obtained by using the $L^2$- and box-constraints at the endtime.} 
      \label{fig:damage_difference} 
   \end{minipage}\hfill%
   \begin{minipage}[t]{0.48\textwidth} 
   \includegraphics[width=0.85\textwidth]{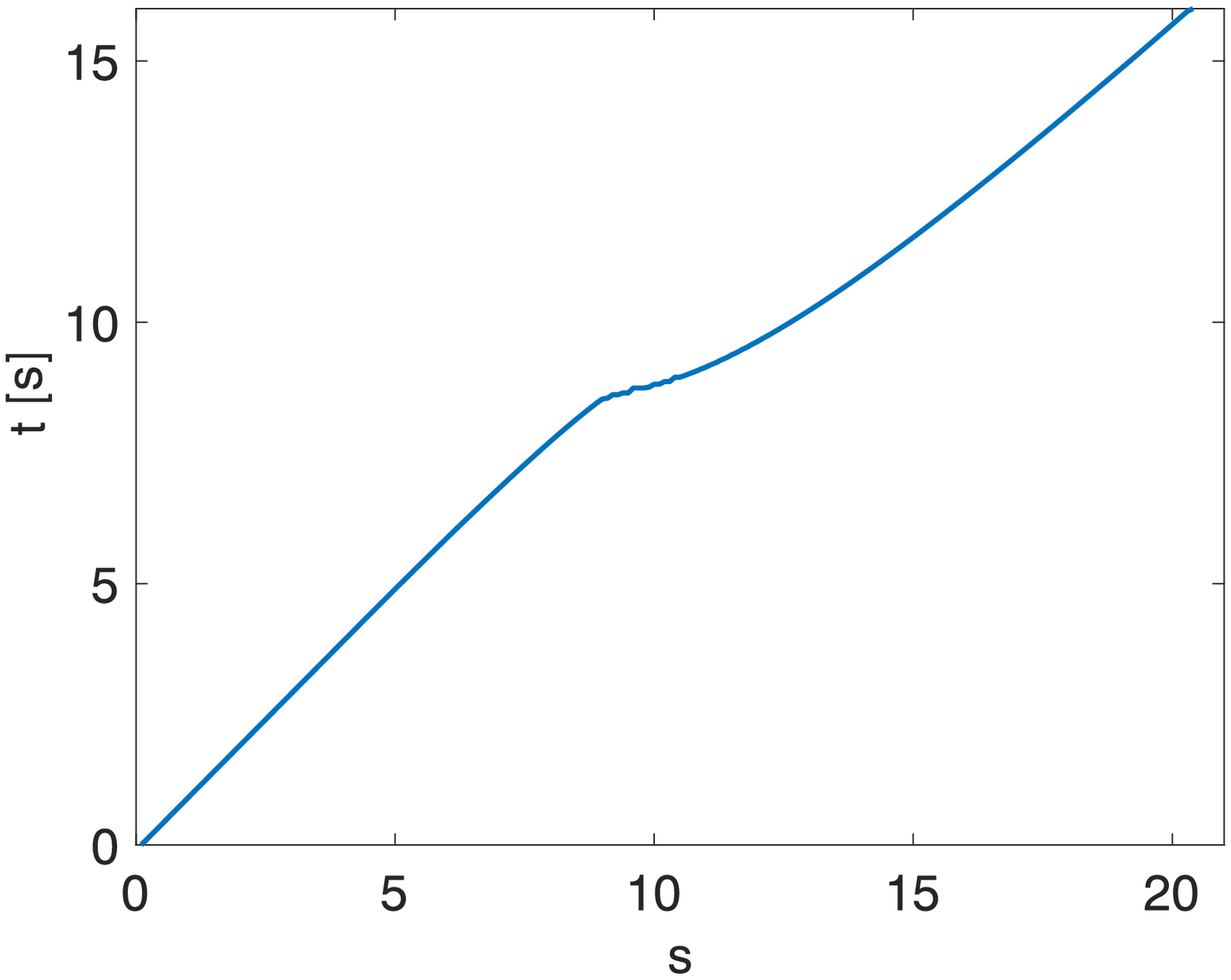}
      \captionof{figure}{Function $ \hat{t} $ in dependence of the artificial time $s$ for the solution obtained by using box-constraints.} 
      \label{fig:time_hole} 
   \end{minipage}
\end{figure}



\subsubsection*{Example II: Brick with a circular hole}
As proof-of-concept we consider the situation depicted in Figure~\ref{fig:geometry_circular}. Again, due to its symmetry the computation is performed using only one quarter of the whole system. This also implies certain symmetry conditions for the boundary of the domain $\Omega$. For the situation at hand, we impose 
\[ u_1 = 0 \quad \text{on } \Gamma_1 = \{ 0 \} \times [50, 100] \quad \text{ and }\quad u_2 = 0 \quad \text{on } \Gamma_2 = [50,100] \times \{ 0 \} . \]
Moreover, the workpiece is pulled apart at two opposite sides, which we realize by the Dirichlet conditions \[ u_1 = 0 , \quad u_2 = t \quad \text{on } \Gamma_D = [0,100] \times \{ 100 \} . \] 
The parameters used are given in the table of Figure~\ref{fig:geometry_circular} (right). Finally, we initiate the evolution with $ z_0 \equiv 0$ and choose $\tau = 0.1$ for the time step size.
The numerical solution of the damage variable for an intermediate time point $ t \approx 7.8 $ is shown in Figure~\ref{fig:damage_hole} and the corresponding force-displacement-diagram is depicted in Figure~\ref{fig:force_hole}. We observe a strong similarity with the results in \cite{hackldimitrijevic}. However, the reaction force in the end phase of the evolution is significantly higher in our case. This may be a result of the additional regularization by the introduction of a "local field" $d$ in \cite{hackldimitrijevic}. Certainly, this requires further investigation.

\begin{figure}
\begin{minipage}{0.6\textwidth}
\center
\begin{tikzpicture}[scale=0.55]

\def\innerCircle{(5,5) circle (2cm)}
\def\omega{(5,5) rectangle (9,9)}
\def\omegaHelp{(5.03,5.03) rectangle (9,9)}
\def\workpiece{(1,1) rectangle (9,9)}

\begin{scope}
\clip \omegaHelp;
\fill[blue!30!white, even odd rule] \omega \innerCircle;
\end{scope}

\draw[thick] \workpiece; 
\draw[thick] \innerCircle;

\begin{scope}
\clip \omegaHelp;
\draw[thick, color=green!70!black, dashed] \innerCircle;
\end{scope}

\draw[|<->|, color=gray] (5,9.6) -- (9,9.6);
\draw[|<->|, color=gray] (9.4,5) -- (9.4,9);
\draw[|<->|, color=gray] (5+1.4,5+1.4) -- (5,5);
\node[color=gray] at (5.7,5.9) {$ r $};
\node[color=gray] at (9.6,7) {$ a $};
\node[color=gray] at (7,9.8) {$ a $};

\draw[color=blue, thick, dashed] (7,5) -- (9,5); 
\draw[color=green!70!black, thick, dashed] (9,5) -- (9,9); 
\draw[color=red, thick, dashed] (9,9) -- (5,9);
\draw[color=blue, thick, dashed] (5,9) -- (5,7);
\node at (8.6,7) {$ \Gamma_N $};
\node at (7,8.7) {$ \Gamma_D $};
\node at (4.7,8) {$ \Gamma_1 $};
\node at (6.7,6.7) {$\Gamma_N$};
\node at (7.7,4.7) {$\Gamma_2$};

\foreach \x in {0,1,2,3,4,5,6,7,8,9,10,11,12,13,14,15,16,17,18,19,20} {
\draw[->] (1+0.4*\x,1) -- (1+0.4*\x,.6);
\draw[->] (1+0.4*\x,9) -- (1+0.4*\x,9.4);
}  

\node at (7.4,7.2) {$\Omega$};

\end{tikzpicture}

\end{minipage}
\hfill
\begin{minipage}{0.39\textwidth}
\center
\renewcommand{\arraystretch}{1.5}
\begin{center}
\begin{tabular}{ l r }
 \multicolumn{2}{c}{parameters} \\
 \hline
 $\kappa$ {\scriptsize{[MPa]} } & 0.1 \\ 
 $E$ {\scriptsize{[GPa] (Young's modulus)}} & 18.0 \\  
 $ \nu$ ({\scriptsize{Poisson's ratio}}) & 0.2  \\
 $\alpha$ {\scriptsize{[MPa$\cdot mm^2$]} } & 1.0 \\
 $a$ {\scriptsize{[$mm$]} } & 100.0 \\ 
 $r$ {\scriptsize{[$mm$]} } & 50.0 
 
\end{tabular}
\end{center}
\end{minipage}

\caption{Geometry of the domain (left); Table of parameters (right).}

\label{fig:geometry_circular}
\end{figure}

\begin{figure}
   \begin{minipage}[t]{0.48\textwidth} 
   \centering
      \includegraphics[width=0.85\textwidth]{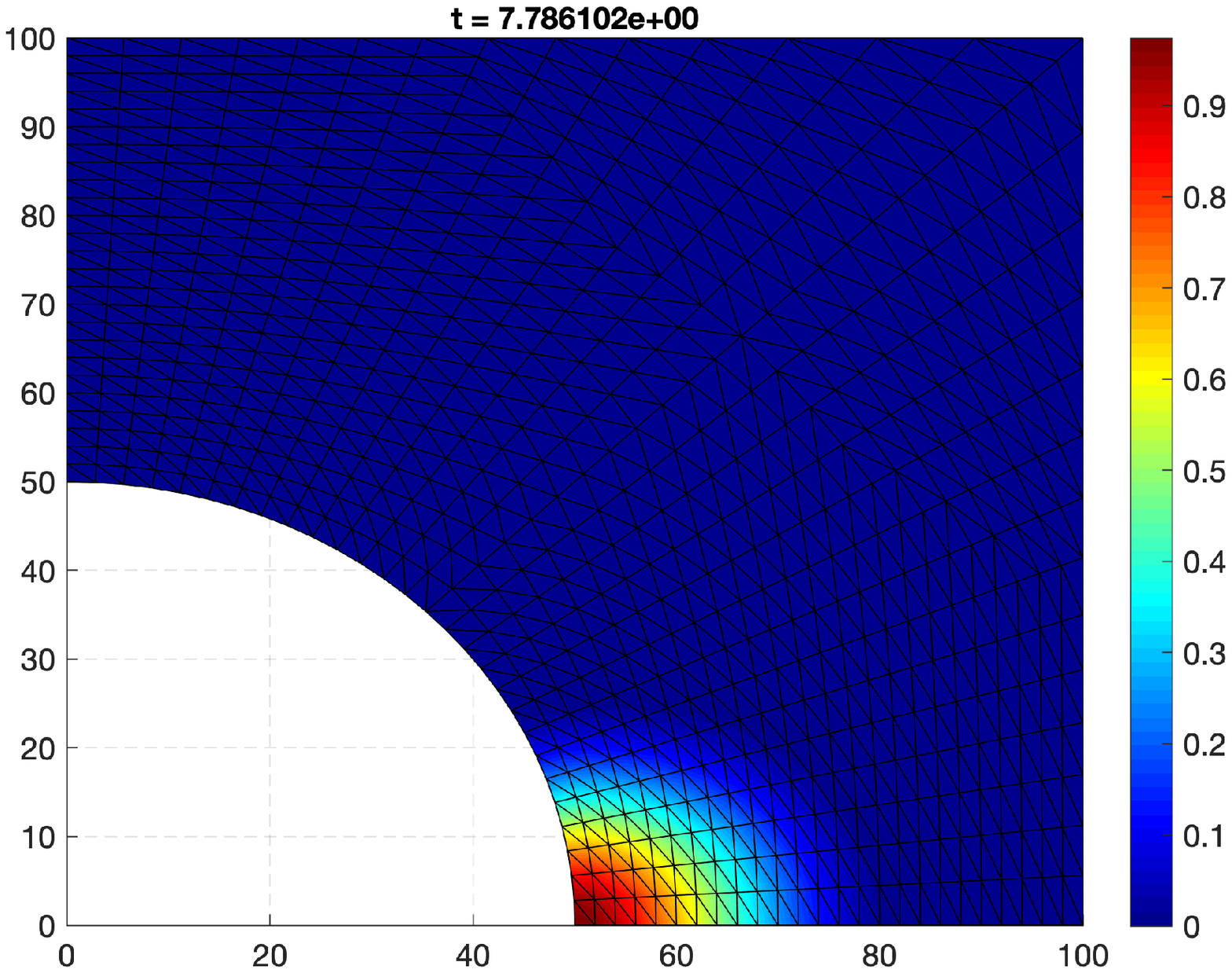} 
      \captionof{figure}{State of the dimensionless damage function $z_h$ at time point $ t \approx 7.8$} 
      \label{fig:damage_hole} 
   \end{minipage}\hfill%
   \begin{minipage}[t]{0.48\textwidth} 
   \centering
   \includegraphics[width=0.85\textwidth]{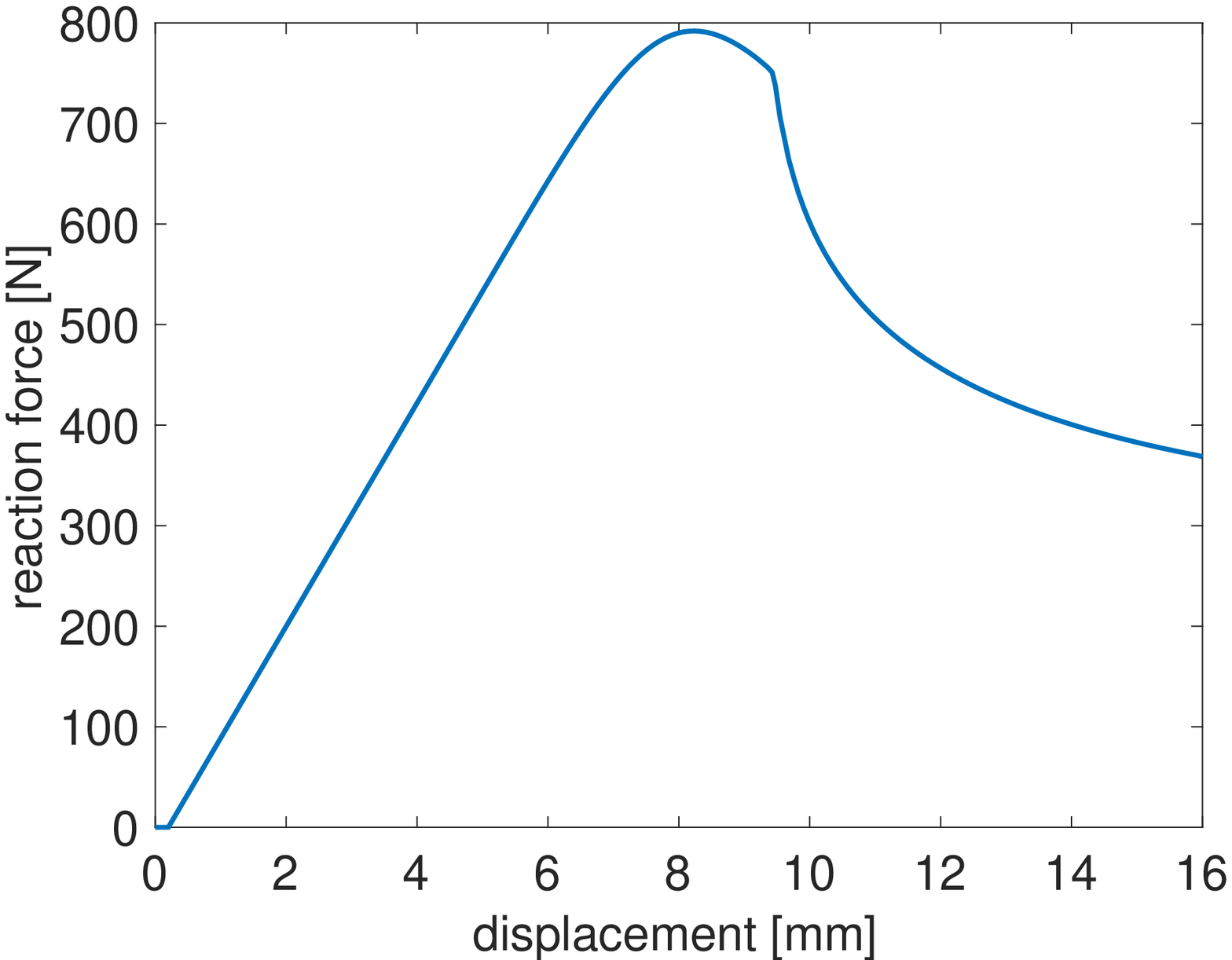}
      \captionof{figure}{Reaction force on the boundary $\Gamma_D$ depending on the displacement $u_D$.} 
      \label{fig:force_hole} 
   \end{minipage}
\end{figure}

%

\section{Conclusions}

We presented a numerical scheme for the approximation of parametrized solutions for rate-independent systems including a fairly general setting for the energy and dissipation functionals.
The scheme itself is based on the local minimization scheme introduced in \cite{efenmielke06}, but relies on stationary points rather than local minima, making it very accessible for numerical optimization algorithms (limit points are, in general, stationary). Moreover, by adapting the convergence analysis of the recent contributions in \cite{knees17,fem_paramsol} and using arguments from \cite{krz13}, we proved that the scheme provides parametrized solutions of the original rate-independent system under Mosco-convergence of approximations for the dissipation $\RR$. While this is at first glance a result that verifies the consistency of the local incremental stationarity scheme, we, moreover, gain an existence result for parametrized solutions in the case of a nonconvex energy and unbounded dissipation. We then focused on the realization of our scheme for a model of the evolution of damage within a workpiece. We employed a finite element discretization in space and used a semi-smooth Newton method for solving the discrete stationary system arising in each step of the scheme. The resulting algorithm behaves efficient and robust in our numerical tests. 
Afterall there are several topics for future research. This concerns for example the usage of an $L^\infty$-norm in the indicator functional in \eqref{eq:alg.zupdate} which leads to an easy to implement algorithm for the damage model considered in this paper. In the same context, it might be interesting to relax the assumptions for the energy in order to incorporate functionals that allow for a sharper resolution of the interface between damaged and undamaged regions. Moreover, considering dissipation functionals which are also depending on the state $z$, i.e., $\RR = \RR(z,z^\prime)$ should be noted here. As there are only few results in this direction for rate-independent systems this does not only concern parametrized solutions.

\begin{acknowledgement}
I would like to thank Christian Meyer (TU Dortmund) for various discussions on the topic.
\end{acknowledgement}

\begin{appendix}

\section{Auxiliary results from convex analysis}

In this section, we collect some useful properties of $\RR_\delta$ and $I_\tau$, respectively. Since most of the results are quite standard, we keep the arguments brief.

\begin{Lemma}\label{lem:char.subdiff}
Let $\WW$ be a normed vector space and $\JJ: \WW \to \R$ a convex and positive 1-homogeneous functional. Then it holds
\begin{subequations}
\begin{gather}
\partial \JJ(v) \subset \partial \JJ(0) \quad \forall v \in \WW \label{eq:app.subdiff0}\\
\xi \in \partial \JJ(0) \; \Longleftrightarrow \; \JJ(w)  \geq \dual{\xi}{w} \quad \forall w \in \WW \label{eq:app.subdiff1}\\
\partial \JJ(v) = \{ \xi \in \partial \JJ(0) \, : \, \JJ(v) = \dual{\xi}{v} \} \label{eq:app.subdiff3}\\
\JJ^\ast(\xi) = I_{\partial \JJ(0)}(\xi) \quad \forall \xi \in \WW^*\label{eq:app.conjfun}
\end{gather}
\end{subequations}
where $I_{\partial \JJ(0)}$ denotes the indicator functional of $\partial \JJ(0)$. 
\end{Lemma}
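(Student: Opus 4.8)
The plan is to derive all four assertions from the definition of the convex subdifferential together with two elementary consequences of positive $1$-homogeneity, which I would isolate up front. First I would record the normalization $\JJ(0)=0$: applying $\JJ(\lambda v)=\lambda\JJ(v)$ with $v=0$ gives $(\lambda-1)\JJ(0)=0$ for every $\lambda>0$, hence $\JJ(0)=0$. Second, I would note a scaling identity for subgradients: if $\xi\in\partial\JJ(v)$, then testing the subgradient inequality $\JJ(w)\geq\JJ(v)+\dual{\xi}{w-v}$ with $w=\lambda v$ yields $(\lambda-1)\JJ(v)\geq(\lambda-1)\dual{\xi}{v}$ for all $\lambda>0$; dividing by $\lambda-1$ in the two cases $\lambda>1$ and $0<\lambda<1$ separately forces $\JJ(v)=\dual{\xi}{v}$.

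With these two facts in hand I would prove the assertions in the order \eqref{eq:app.subdiff1}, then \eqref{eq:app.subdiff0}, then \eqref{eq:app.subdiff3}, then \eqref{eq:app.conjfun}. For \eqref{eq:app.subdiff1}: by definition $\xi\in\partial\JJ(0)$ means $\JJ(w)\geq\JJ(0)+\dual{\xi}{w}$ for all $w\in\WW$, and since $\JJ(0)=0$ this is precisely $\JJ(w)\geq\dual{\xi}{w}$ for all $w$. For \eqref{eq:app.subdiff0}: let $\xi\in\partial\JJ(v)$; by the scaling identity $\JJ(v)=\dual{\xi}{v}$, so the subgradient inequality rewrites as $\JJ(w)\geq\JJ(v)+\dual{\xi}{w-v}=\dual{\xi}{v}+\dual{\xi}{w}-\dual{\xi}{v}=\dual{\xi}{w}$ for every $w$, which by \eqref{eq:app.subdiff1} means $\xi\in\partial\JJ(0)$.

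For \eqref{eq:app.subdiff3} the inclusion ``$\subseteq$'' is immediate from the previous step together with the scaling identity $\JJ(v)=\dual{\xi}{v}$; for ``$\supseteq$'', if $\xi\in\partial\JJ(0)$ with $\JJ(v)=\dual{\xi}{v}$ then for each $w$ assertion \eqref{eq:app.subdiff1} gives $\JJ(w)\geq\dual{\xi}{w}=\dual{\xi}{w-v}+\JJ(v)$, i.e. $\xi\in\partial\JJ(v)$. Finally, for \eqref{eq:app.conjfun} I would compute $\JJ^\ast(\xi)=\sup_{w\in\WW}\big(\dual{\xi}{w}-\JJ(w)\big)$ directly: if $\xi\in\partial\JJ(0)$, the supremand is $\leq 0$ by \eqref{eq:app.subdiff1} and equals $0$ at $w=0$, so $\JJ^\ast(\xi)=0$; if $\xi\notin\partial\JJ(0)$, there is $w_0$ with $\dual{\xi}{w_0}-\JJ(w_0)>0$, and replacing $w_0$ by $\lambda w_0$ and invoking $1$-homogeneity makes the supremand $\lambda\big(\dual{\xi}{w_0}-\JJ(w_0)\big)$ tend to $+\infty$, so $\JJ^\ast(\xi)=+\infty$. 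Together these two cases give $\JJ^\ast=I_{\partial\JJ(0)}$.

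There is no genuinely hard step here; the only points requiring care are the direction of the inequalities in the $\lambda\gtrless 1$ case distinction that yields $\JJ(v)=\dual{\xi}{v}$, and the standing hypothesis that $\JJ$ is finite-valued on all of $\WW$, so that the scalings are unambiguous and no $\infty-\infty$ ambiguity arises in the conjugate. I would state the two homogeneity observations as an explicit preliminary so that the remainder of the argument is a short chain of rewrites.
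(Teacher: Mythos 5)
Your proof is correct, and it is the canonical argument for this classical fact; the paper itself supplies no proof here, merely remarking that the results in this appendix are standard, so your write-up is exactly the kind of short justification the authors are implicitly relying on. The two preliminary observations you isolate, namely $\JJ(0)=0$ and the scaling identity $\JJ(v)=\dual{\xi}{v}$ whenever $\xi\in\partial\JJ(v)$ (obtained by testing the subgradient inequality at $w=\lambda v$ and letting $\lambda$ straddle $1$), are precisely the right levers, and your chain \eqref{eq:app.subdiff1} $\Rightarrow$ \eqref{eq:app.subdiff0} $\Rightarrow$ \eqref{eq:app.subdiff3} $\Rightarrow$ \eqref{eq:app.conjfun} is clean; the remark that the standing hypothesis $\JJ\colon\WW\to\R$ avoids any $\infty-\infty$ ambiguity is a useful point to record, since the paper later applies the characterizations to extended-real-valued dissipations where a proper-domain caveat would be needed.
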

Let us define the indicator functional $ I_\tau : \VV \to \R \cup \{ \infty \}$ as 
\begin{equation}\label{eq:inditau}
I_\tau(v) := \begin{cases} 0, & \text{if } \norm{v}_\VV^2 \leq \tau^2 \\ +\infty , & \text{else}. \end{cases}
\end{equation}
As in the proof of Lemma~\ref{prop.optimalityprops}, we abbreviate 
$\RR_{\tau,\delta} = \RR_\delta + I_\tau$.

\begin{Lemma}\label{lem:conv.ana}
For every $\eta \in \ZZ^*$, there holds
\begin{equation}
 (\RR_{\tau,\delta})^\ast(\eta) = \tau \dist_{\VV^*}\{\eta, \partial \RR_\delta(0)\}, \label{eq:eq.A2}
\end{equation}
where $\dist_{\VV^*}\{\eta,\partial\RR_\delta(0)\} = \inf\{\norm{\eta-w}_{\VV^*} \, : \, w \in \partial \RR_\delta(0)\}$. Particularly, $ \dist_{\VV^*}\{\eta,\partial\RR_\delta(0)\} = \infty $ if there exists no $ w \in \partial \RR_\delta(0) $ such that $ \eta - w \in \VV^*$. Moreover, if $\dist_{\VV^*}\{\eta,\partial\RR_\delta(0)\} < \infty$ then there exists $ w \in \partial\RR_\delta(0) $ such that $\dist_{\VV^*}\{\eta,\partial\RR_\delta(0)\} = \norm{\eta-w}_{\VV^*} $.
\end{Lemma}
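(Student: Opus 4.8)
The plan is to obtain \eqref{eq:eq.A2} from the Fenchel sum rule for conjugate functions. First I would verify the qualification condition: since $\ZZ \embeds \VV$ with embedding constant $c_\VV$, the $\VV$-ball $\{v : \norm{v}_\VV \leq \tau\}$ contains the $\ZZ$-ball of radius $\tau/c_\VV$ around the origin, so $I_\tau$, viewed on $\ZZ$ as in Remark~\ref{rem:RR_only_in_ZZ}, vanishes on a $\ZZ$-neighbourhood of $0$ and is in particular continuous at $0 \in \dom(\RR_\delta)$. Hence the duality theorem for sums of proper, convex, lower semicontinuous functionals applies and yields, for every $\eta \in \ZZ^\ast$,
\begin{equation*}
 (\RR_{\tau,\delta})^\ast(\eta) = \inf_{\xi \in \ZZ^\ast} \big( \RR_\delta^\ast(\xi) + I_\tau^\ast(\eta-\xi) \big),
\end{equation*}
the infimum being attained whenever it is finite.

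Next I would compute the two conjugates. By Lemma~\ref{lem:char.subdiff}, the convexity and positive $1$-homogeneity of $\RR_\delta$ give $\RR_\delta^\ast = I_{\partial\RR_\delta(0)}$ as a functional on $\ZZ^\ast$, with $\partial\RR_\delta(0) \subset \ZZ^\ast$. For $I_\tau^\ast$ I would use the density of $\ZZ$ in $\VV$: for $\zeta \in \ZZ^\ast$ one has
\begin{equation*}
 I_\tau^\ast(\zeta) = \sup\{ \dual{\zeta}{v} : v \in \ZZ, \ \norm{v}_\VV \leq \tau \} = \tau \, \sup\{ \dual{\zeta}{v} : v \in \ZZ, \ \norm{v}_\VV \leq 1 \},
\end{equation*}
and the last supremum is finite precisely when $v \mapsto \dual{\zeta}{v}$ is $\norm{\cdot}_\VV$-bounded on $\ZZ$, i.e. when $\zeta$ extends (uniquely, by density) to an element of $\VV^\ast$, in which case it equals $\norm{\zeta}_{\VV^\ast}$; otherwise it is $+\infty$. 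Thus $I_\tau^\ast(\zeta) = \tau\norm{\zeta}_{\VV^\ast}$ for $\zeta \in \VV^\ast$ and $I_\tau^\ast(\zeta) = +\infty$ for $\zeta \in \ZZ^\ast \setminus \VV^\ast$. Substituting both facts into the infimal convolution gives
\begin{align*}
 (\RR_{\tau,\delta})^\ast(\eta) &= \inf_{\xi \in \partial\RR_\delta(0)} I_\tau^\ast(\eta-\xi) \\
 &= \inf\{ \tau\norm{\eta - \xi}_{\VV^\ast} : \xi \in \partial\RR_\delta(0), \ \eta - \xi \in \VV^\ast \} = \tau \, \dist_{\VV^\ast}\{\eta, \partial\RR_\delta(0)\},
\end{align*}
which is \eqref{eq:eq.A2}; in particular this also produces the asserted $+\infty$ branch, i.e.\ the distance is $+\infty$ exactly when no $\xi \in \partial\RR_\delta(0)$ satisfies $\eta - \xi \in \VV^\ast$.

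Finally, for the attainment claim, suppose $\dist_{\VV^\ast}\{\eta,\partial\RR_\delta(0)\} =: d < \infty$ and choose $\xi_n \in \partial\RR_\delta(0)$ with $\eta - \xi_n \in \VV^\ast$ and $\norm{\eta - \xi_n}_{\VV^\ast} \to d$. Since $\VV$, hence $\VV^\ast$, is reflexive by the standing assumptions, a subsequence satisfies $\eta - \xi_{n_k} \weakly \mu$ in $\VV^\ast$; as the inclusion $\VV^\ast \embeds \ZZ^\ast$ is continuous and linear, this also gives $\xi_{n_k} \weakly \eta - \mu$ in $\ZZ^\ast$, and since $\partial\RR_\delta(0)$ is weakly closed in $\ZZ^\ast$ (Remark~\ref{rem:subdiffRR_weakly_closed}), the point $w := \eta - \mu$ lies in $\partial\RR_\delta(0)$ and satisfies $\eta - w = \mu \in \VV^\ast$. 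Weak lower semicontinuity of $\norm{\cdot}_{\VV^\ast}$ then yields $\norm{\eta - w}_{\VV^\ast} \leq \liminf_k \norm{\eta - \xi_{n_k}}_{\VV^\ast} = d$, so $w$ realizes the distance. The step I expect to require the most care is the bookkeeping between the topologies of $\ZZ$ and $\VV$ and their duals — verifying the qualification condition for the sum rule and establishing the $+\infty$ branch of $I_\tau^\ast$ through the density of $\ZZ$ in $\VV$; the remaining manipulations are routine convex analysis.
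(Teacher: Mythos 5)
Your proposal is correct and follows essentially the same route as the paper: Fenchel sum rule / inf-convolution, identify $\RR_\delta^\ast = I_{\partial\RR_\delta(0)}$ via Lemma~\ref{lem:char.subdiff} and $I_\tau^\ast(\zeta) = \tau\|\zeta\|_{\VV^\ast}$ (with the $+\infty$ branch), then a minimizing-sequence argument using reflexivity of $\VV$, the embedding $\VV^\ast \embeds \ZZ^\ast$, weak closedness of $\partial\RR_\delta(0)$ in $\ZZ^\ast$, and weak lower semicontinuity of the norm for attainment. The only difference is that you spell out the qualification condition (continuity of $I_\tau$ at $0$ via the embedding $\ZZ\embeds\VV$) and the density argument for the $+\infty$ branch of $I_\tau^\ast$, which the paper leaves implicit behind a citation.
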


\begin{proof}
We use the inf-convolution formula (see \citep[Prop.~3.4]{attouch}), 
which is applicable, since both functions are proper, convex and closed and we have $ \operatorname{dom}(I_\tau) = B_\VV(0,\tau)$. This gives
\begin{equation}\label{eq:infconv}
 \left(\RR_\delta + I_\tau\right)^\ast(\eta) = 
 \inf_{w\in \VV^*}\left(\RR_\delta^\ast(w)+ I_\tau^\ast(\eta-w)\right).
\end{equation}
For $I_\tau^*$, direct calculation leads to
\begin{equation}\label{eq:Itaufenchel}
 I_\tau^{\ast}(\eta) 
 = \begin{cases} \tau \norm{\eta}_{\VV^*}, & \text{if } \eta \in \VV^* \\ +\infty, & \text{if } \eta \in \ZZ^* \setminus \VV^*. \end{cases}
\end{equation}
Moreover, Lemma~\ref{lem:char.subdiff} gives
$\RR_\delta^\ast(\eta) = I_{\partial\RR_\delta(0)}(\eta)$. Inserting this together with \eqref{eq:Itaufenchel} in \eqref{eq:infconv} finally yields
\begin{align*}
(\RR_\delta + I_\tau)^\ast(\eta)
= \inf_{w \in \partial\RR_\delta(0)} \{\tau \norm{\eta-w}_{\VV^*} \}
= \tau \dist_{\VV^*}( \eta, \partial\RR_\delta(0)),
\end{align*}
which is \eqref{eq:eq.A2}. Now, let $ \dist_{\VV^*}( \eta, \partial\RR_\delta(0)) < \infty $ and take $ w_k \in \partial\RR_\delta \subset \ZZ^*$ such that $\lim_{k \to \infty} \norm{\eta - w_k}_\VV^* = \dist_{\VV^*}( \eta, \partial\RR_\delta(0))$. Obviously, this implies that $ \mu_k := \eta - w_k$ is uniformly bounded in $\VV^*$. Hence, we can extract a weakly convergent subsequence (w.l.o.g. denoted by the same symbol) such that $ \mu_k \weakly \mu $ in $ \VV^*$ for some $ \mu \in \VV^*$.  Therefore, by the embedding $\VV^* \embeds \ZZ^*$, we have $ w_k = \eta - \mu_k \weakly \eta - \mu =: w$ in $\ZZ^*$ which implies that $\mu = \eta - w$. To proceed, we note that $\partial\RR_\delta(0) \subset \ZZ^*$ is again convex and closed and thus weakly closed in $\ZZ^*$. Since $ w_k \in \partial\RR_\delta(0)$ for all $ k$ we also have $ w \in \partial\RR_\delta(0)$. Finally, the weak lower semicontinuity of the norm gives
\[ \dist_{\VV^*}( \eta, \partial\RR_\delta(0)) \leq \norm{\eta - w}_{\VV^*} = \norm{\mu}_{\VV^*} \leq \liminf_{k \to \infty} \norm{\mu_k}_{\VV^*} = \dist_{\VV^*}( \eta, \partial\RR_\delta(0)) \]
which finishes the proof.
\end{proof}

In view of \eqref{eq:alg.zupdate} we note that $ \partial^\ZZ I_\tau(z) = \partial^\VV I_\tau(z) $ for any $ z \in \ZZ$ which can be easily obtained by considering the projection operator $ \Pi_\VV : \ZZ \to \VV$ and applying the chain-rule for subdifferentials to $ I_\tau \circ \Pi_\VV$. As in Remark~\ref{rem:subdiffRR_weakly_closed} we nevertheless simply write $\partial I_\tau(z)$. Moreover, we have the following characterization.
\begin{Lemma}\label{lem:subdiffItau}
 Let $\VV$ be a reflexive Banach space and $v \in \VV$ be arbitrary. Then, $\xi \in \VV^*$ is an element of $\partial I_\tau(v)$, iff
 \begin{equation}\label{eq:subdiffItau_char}
  \norm{v}_\VV \leq \tau, \quad \norm{\xi}_{\VV^*} (\norm{v}_\VV - \tau) = 0, \quad \dual{\xi}{v} \geq \norm{\xi}_{\VV^*} \tau.
 \end{equation}
If $ \VV $ is even a Hilbert space, then $ \xi \in \partial I_\tau(v)$ iff there exists a multiplier $\lambda \in \R$ such that $ \xi = \lambda \, J_\VV v $ and
 \begin{equation}\label{eq:subdiffItau_hilbert}
  \norm{v}_\VV \leq \tau, \quad \lambda (\norm{v}_\VV - \tau) = 0, \quad \lambda \geq 0.
 \end{equation}
\end{Lemma}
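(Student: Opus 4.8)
The plan is to unwind the definition of the convex subdifferential and reduce everything to the support functional of the $\VV$-ball, since $I_\tau$ is just the indicator of the closed ball $\{w\in\VV:\norm{w}_\VV\le\tau\}$. First I would observe that $\xi\in\partial I_\tau(v)$ means precisely that $\norm{v}_\VV\le\tau$ (so that $I_\tau(v)=0<\infty$) together with
\[
\langle\xi,w-v\rangle\le I_\tau(w)-I_\tau(v)=0\qquad\forall\, w\in\VV,\ \norm{w}_\VV\le\tau,
\]
i.e.\ $\langle\xi,w\rangle\le\langle\xi,v\rangle$ for all $w$ with $\norm{w}_\VV\le\tau$. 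Taking the supremum over such $w$ and using $\sup_{\norm{w}_\VV\le\tau}\langle\xi,w\rangle=\tau\norm{\xi}_{\VV^*}$, this is equivalent to the single inequality $\tau\norm{\xi}_{\VV^*}\le\langle\xi,v\rangle$. The converse direction needs no attainment of the supremum: if $\tau\norm{\xi}_{\VV^*}\le\langle\xi,v\rangle$, then for every $w$ with $\norm{w}_\VV\le\tau$ we get $\langle\xi,w\rangle\le\norm{\xi}_{\VV^*}\norm{w}_\VV\le\tau\norm{\xi}_{\VV^*}\le\langle\xi,v\rangle$, which is the subgradient inequality.

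Next I would recover the remaining relations in \eqref{eq:subdiffItau_char} by combining $\tau\norm{\xi}_{\VV^*}\le\langle\xi,v\rangle$ with the Cauchy--Schwarz bound $\langle\xi,v\rangle\le\norm{\xi}_{\VV^*}\norm{v}_\VV\le\tau\norm{\xi}_{\VV^*}$ (using $\norm{v}_\VV\le\tau$). This chain forces equality throughout, in particular $\norm{\xi}_{\VV^*}\norm{v}_\VV=\tau\norm{\xi}_{\VV^*}$, i.e.\ the complementarity relation $\norm{\xi}_{\VV^*}(\norm{v}_\VV-\tau)=0$. Hence $\xi\in\partial I_\tau(v)$ is equivalent to the three conditions in \eqref{eq:subdiffItau_char} (the complementarity one being in fact already implied by the other two).

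For the Hilbert-space refinement I would identify $\VV$ with $\VV^*$ and write $\xi=J_\VV\tilde\xi$ for the unique $\tilde\xi\in\VV$ with $\langle\xi,w\rangle=(\tilde\xi,w)_\VV$ for all $w$ and $\norm{\xi}_{\VV^*}=\norm{\tilde\xi}_\VV$. The characterization above then reads $\tau\norm{\tilde\xi}_\VV\le(\tilde\xi,v)_\VV\le\norm{\tilde\xi}_\VV\norm{v}_\VV\le\tau\norm{\tilde\xi}_\VV$, so all inequalities are equalities. Equality in the Cauchy--Schwarz inequality in the Hilbert space $\VV$ forces $\tilde\xi$ and $v$ to be non-negatively collinear, so $\tilde\xi=\lambda v$ with $\lambda\ge0$ (choosing $\lambda=0$ in the degenerate cases $\tilde\xi=0$ or $v=0$), i.e.\ $\xi=\lambda J_\VV v$; inserting $\norm{\tilde\xi}_\VV=\lambda\norm{v}_\VV$ into the equality $\norm{\tilde\xi}_\VV\norm{v}_\VV=\tau\norm{\tilde\xi}_\VV$ yields $\lambda\norm{v}_\VV(\norm{v}_\VV-\tau)=0$, which with $\norm{v}_\VV\le\tau$ gives exactly $\lambda(\norm{v}_\VV-\tau)=0$. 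The reverse implication is a direct computation: for $\xi=\lambda J_\VV v$ with $\lambda\ge0$, $\norm{v}_\VV\le\tau$ and $\lambda(\norm{v}_\VV-\tau)=0$ one has $\tau\norm{\xi}_{\VV^*}=\tau\lambda\norm{v}_\VV$ and $\langle\xi,v\rangle=\lambda\norm{v}_\VV^2$, and $\tau\lambda\norm{v}_\VV\le\lambda\norm{v}_\VV^2$ is trivial in either case $\lambda=0$ or $\norm{v}_\VV=\tau$; hence \eqref{eq:subdiffItau_char} holds and $\xi\in\partial I_\tau(v)$.

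I expect no serious obstacle here; the one mildly delicate point is the passage from the abstract inequality $\tau\norm{\xi}_{\VV^*}\le\langle\xi,v\rangle$ to the explicit multiplier, which rests on the fact that equality in Cauchy--Schwarz characterizes collinearity and therefore genuinely uses the Hilbert (or at least strictly convex, smooth) structure of $\VV$ — that is precisely why the first part of the statement is unconditional for reflexive $\VV$ while the representation $\xi=\lambda J_\VV v$ requires a Hilbert space. Everything else is routine bookkeeping with the dual pairing and the definition of the dual norm.
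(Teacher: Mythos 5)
Your proposal is correct and follows essentially the same route as the paper. The paper invokes the Fenchel--Young equality $\xi\in\partial I_\tau(v)\Leftrightarrow I_\tau(v)+I_\tau^*(\xi)=\dual{\xi}{v}$ together with the computed conjugate $I_\tau^*(\xi)=\tau\norm{\xi}_{\VV^*}$, whereas you recover the same equivalence by maximizing the subgradient inequality directly over the ball; the subsequent chain $\tau\norm{\xi}_{\VV^*}\le\dual{\xi}{v}\le\norm{\xi}_{\VV^*}\norm{v}_\VV\le\tau\norm{\xi}_{\VV^*}$ and the Hilbert-space reduction via equality in Cauchy--Schwarz are the same.
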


\begin{proof}
According to a classical result of convex analysis in combination with \eqref{eq:Itaufenchel}, it holds
\begin{align}
 \xi \in \partial I_\tau(v) \quad \Longleftrightarrow \quad I_\tau(v)+I_\tau^*(\xi) = \dual{\xi}{v} 
 \quad \Longleftrightarrow \quad
 \left\{\;\begin{aligned}
  & \norm{v}_\VV \leq \tau \\
  &\tau \norm{\xi}_{\VV^*} = \dual{\xi}{v}
 \end{aligned}\right. 
\label{eq:eq.auxAAA}
\end{align}
Hence, \eqref{eq:subdiffItau_char} follows easily from $ \tau \norm{\xi}_{\VV^*} = \dual{\xi}{v} \leq \norm{v}_\VV \norm{\xi}_{\VV^*}$. Now, assume that $\VV$ is a Hilbert space. Then, the equality in \eqref{eq:eq.auxAAA} can only hold if $\xi = \lambda \, J_\VV v$ for some $\lambda \in \R$. 
Inserting this into \eqref{eq:eq.auxAAA}, we conclude that $\lambda \geq 0$ and so, if $\norm{v}_\V < \tau$, then $\xi = 0$ which gives \eqref{eq:subdiffItau_hilbert}.
\end{proof}	

\section{Numerical aspects for the discrete energy functional}\label{sec:app_num}

In this section we formally derive the necessary derivatives of $\bI$ used for the implementation of the local stationarity scheme in case of the damage model. We start with $ D_z\bI$ for which we note that by $ D_u \tilde{\EE}(t,z_h,S_h(z_h)) = 0$ from \eqref{eq:discr_sol_operator}, one obtains
\begin{multline}\label{eq:discr_first_derivative}
D_z \tilde{\II}(t , z_h) v_h  = D_z \tilde{\EE}(t,z_h,S_h(z_h)) v_h + D_u \tilde{\EE}(t,z_h,S_h(z_h)) [ S_h^\prime(z_h) v_h ] \\
= D_z \tilde{\EE}(t,z_h,S_h(z_h)) v_h \quad \forall v_h \in \ZZ_h . 
\end{multline}
Hence, with a little abuse of notation (particularly identifying $ \bz $ and $z_h$) we have 
\[ \Big( D_z \bI(t , \bz) \Big)_{i=1}^{N_h} = \Big( \dual{D_z \tilde{\EE}(t,z_h,S_h(z_h))}{\varphi_i}_{\ZZ^*,\ZZ} \Big)_{i=1}^{N_h}. \]
The fact that $  D_u \tilde{\EE}(t,z_h,S_h(z_h)) = 0 $ moreover implies that 
\begin{align*}
0 &= D_z \{ D_u \tilde{\EE}(t,z_h,S_h(z_h))\, v_h \} \, w_h \\
&= D^2_{zu} \tilde{\EE}(t,z_h,S_h(z_h))[v_h,\, w_h] + D^2_{uu} \tilde{\EE}(t,z_h,S_h(z_h))[v_h, \, S_h^\prime(z_h)w_h] \\
\end{align*}
Therefore, for arbitrary $ w_h $, we can characterize $\eta_h := S_h^\prime(z_h) w_h$ as the solution of
\begin{equation} \label{eq:adjoint_solution_operator}
0 = D^2_{zu} \tilde{\EE}(t,z_h,S_h(z_h))[v_h, \, w_h] + D^2_{uu} \tilde{\EE}(t,z_h,S_h(z_h))[v_h, \, S_h^\prime(z_h)w_h] \quad \forall w_h \in \UU_h .
\end{equation}
Consequently, exploiting \eqref{eq:discr_first_derivative} it holds
\begin{align*}
D_z^2 \tilde{\II}(t , z_h) [v_h,w_h] &=  D_u D_z \tilde{\EE}(t,z_h,S_h(z_h))[v_h,S_h^\prime(z_h) w_h] + D_z D_z \tilde{\EE}(t,z_h,S_h(z_h))[v_h,w_h] \\
&= D^2_{uz} \tilde{\EE}(t,z_h,S_h(z_h))[v_h,\eta_h] + D^2_{zz} \tilde{\EE}(t,z_h,S_h(z_h))[v_h,w_h]
\end{align*}
with $ \eta_h = S_h^\prime(z_h) w_h $ solving \eqref{eq:adjoint_solution_operator}. Again, with a little abuse of notation we thus have 
\begin{multline}\label{eq:D2z_boldI}
\Big( D_z^2 \bI(t , \bz) \Big)_{i,j=1}^{N_h} = \Big( \dual{D^2_{uz} \tilde{\EE}(t,z_h,S_h(z_h)) \varphi_i}{S_h^\prime(z_h)\varphi_j }_{\ZZ^*,\ZZ} \\
+ \dual{D^2_{zz} \tilde{\EE}(t,z_h,S_h(z_h)) \varphi_i}{\varphi_j}_{\ZZ^*,\ZZ} \Big)_{i,j=1}^{N_h} . \end{multline}
Now, let us turn to the semismooth Newton-method that is used in order to solve the stationary system in \eqref{eq:stationary_reformulation}. In general, if we denote the left hand side of \eqref{eq:stationary_reformulation} by $F:\R^{2N_h+1} \to \R^{2N_h+1}$, equation \eqref{eq:stationary_reformulation} becomes $F(\bz, \bq, \lambda) = 0$ and we need to solve the following semi-smooth Newton equation
\begin{equation*}
 H_n \, (\bx^{n+1} - \bx^n) = - F(\bx^n) 
 \quad \text{with} \quad
 H_n \in \partial^N F(\bx^n),
\end{equation*}
with the iterate $\bx^n = (\bz^n, \bq^n, \lambda^n)$ and a Newton-derivative $\partial^N F$. However, the matrix $H_n$ contains second order information of $\bI$ which, by \eqref{eq:D2z_boldI} necessitates the determination of $ S_h^\prime(z_h)$. In order to keep track of this, we blow up the whole system so that in every semismooth Newton-step we actually solve
\begin{equation*}
 \tilde{H}_n \begin{pmatrix} \Delta \bz^n \\ \mathbf{\eta}^n \\ \Delta \bq^n \\ \Delta \lambda^n \end{pmatrix} 
= - \begin{pmatrix}
D_z\bI(t_{k-1},\bz^n) + \kappa \bm + \bq^n + \lambda^n \, \omega \, M \, (\bz^n - \bz^{k-1}) \\
0 \\
\max\{q_i^n ,-( z_i^n - z_{i}^{k-1}) \} \\
\max\{ -\lambda, G(\bz^n) \}
\end{pmatrix}
\end{equation*}
with
\begin{equation}\label{eq:newtonmatrix}
 \tilde{H}_n := \begin{pmatrix}
D^2_{zz} \tilde{\EE}(\cdot) + \lambda_n \omega M & & & D^2_{zu} \tilde{\EE}(\cdot) & & & \textup{Id}_{N_h \times N_h} & & & \omega M ( \bz^n - \bz^{k-1} ) \\[1ex]
D^2_{zu} \tilde{\EE}(\cdot) & & & D^2_{uu} \tilde{\EE}(\cdot) & & & 0 & & & 0 \\[1ex]
\diag(\balpha^n) & & & 0 & & & \diag(1+\balpha^n) & & & 0 \\[1ex]
\chi_n (\bz^n - \bz^{k-1})^\top \omega M & & & & & & & & & -1 + \chi_n
\end{pmatrix}
\end{equation}
Note that to shorten the notation we let $ (\cdot) \, \hat{=} \, ( t_{k-1}, \bz^n, S_h(\bz^n))$. Moreover, we have 
\begin{equation*} 
\balpha^n_i := 
 \begin{cases}
  -1, & - q_i^n - (z_i^n - z_i^{k-1}) > 0, \\
  0, & - q_i^n - (z_i^n - z_i^{k-1}) \leq 0,  
 \end{cases}
\qquad\text{and}\qquad
 \chi_n := 
\begin{cases}
 1, & G(\bz^n)>-\lambda^n, \\
 0, & G(\bz^n)\leq -\lambda^n.
\end{cases}
\end{equation*}
Eventually, we update $ \bz^{n+1} = \bz^n + \Delta \bz^n $, $\bq^{n+1} = \bq^n + \Delta \bq^n$ and $ \lambda^{n+1} = \lambda^n + \Delta \lambda^n$. With this choice, all matrices $\tilde{H}_n$ appearing in our numerical test have shown to be invertible
and the semi-smooth Newton method performed well with respect to both, robustness and efficiency. In particular, 
no globalization efforts are needed to ensure convergence of the method and, moreover, no line search was necessary in order to guarantee condition \eqref{eq:alg.zupdate_lower} in \nameref{alg:locmin}. 
A rigorous convergence analysis of the method however would go beyond the scope of this paper and is subject to future research.

\end{appendix}

\renewcommand{\refname}{References} 
\bibliography{ref_unbounded}
\bibliographystyle{alphadin}

\end{document}